\documentclass[reqno,11pt]{amsart}
\usepackage{amsfonts, amsmath, amssymb, amsthm, mathtools}
\usepackage[margin=2.5cm, heightrounded]{geometry}
\usepackage{fancybox}
\usepackage{hhline, float}
\usepackage{mathrsfs}
\usepackage{nccmath}
\usepackage{bm}
\usepackage[normalem]{ulem}
\usepackage{cite}
\usepackage[dvipsnames]{xcolor}
\usepackage[colorlinks=true]{hyperref}
\hypersetup{citecolor=MidnightBlue, linkcolor=MidnightBlue, urlcolor=MidnightBlue}

\newcommand{\dt}{\textup{d}t}
\newcommand{\ds}{\textup{d}s}
\newcommand{\dx}{\textup{d}x}
\newcommand{\dy}{\textup{d}y}

\usepackage{enumitem}
\usepackage{microtype}
\usepackage[nameinlink,noabbrev]{cleveref}
\usepackage{booktabs}
\usepackage{array}
\usepackage{longtable}
\newtheorem{thm}{Theorem}[section]
\newtheorem{lemma}[thm]{Lemma}
\newtheorem{prop}[thm]{Proposition}

\theoremstyle{definition}

\newcommand{\R}{\mathbb R}

\newcommand{\loc}{\mathrm{loc}}

\allowdisplaybreaks
\numberwithin{equation}{section}

\makeatletter
\@namedef{subjclassname@2020}{%
\textup{2020} Mathematics Subject Classification}
\newcommand{\maketitlewithmetadata}{%
  \let\savedsubjclass\@subjclass
  \let\savedkeywords\@keywords
  \let\@subjclass\@empty
  \let\@keywords\@empty
  \maketitle
  \begingroup
    \small
    \noindent\textit{\subjclassname.}\enspace\savedsubjclass\par
    \smallskip
    \noindent\textit{\keywordsname.}\enspace\savedkeywords\par
  \endgroup
}
\makeatother

\begin{document}

\title{Classification of positive solutions to the critical Lane--Emden system}

\author{Yuxia Guo}
\address[Yuxia Guo]{Department of Mathematical Sciences, Tsinghua University, Beijing, P.R. China}
\email{yguo@tsinghua.edu.cn}

\author{Yichen Hu}
\address[Yichen Hu]{School of Mathematical Sciences, Dalian University of Technology, Dalian, Liaoning, P. R. China}
\email{huyc24@dlut.edu.cn}

\author{Shaolong Peng}
\address[Shaolong Peng]{School of Mathematical Sciences, Beihang University (BUAA), Beijing, P.R. China}
\email{shaolongpeng@buaa.edu.cn}

\author{Tingfeng Yuan}
\address[Tingfeng Yuan]{Department of Mathematical Sciences, Tsinghua University, Beijing, P. R. China}
\email{ytf22@mails.tsinghua.edu.cn}

\thanks{Yuxia Guo was supported by the National Natural Science Foundation of China (No. 12271283) and National Key R\&D Program (2023YFA1010002). Yichen Hu is supported by NNSF of China (No. 12501135) and the Fundamental Research Funds for the Central Universities (DUT24RC(3)110). Shaolong Peng was supported by the National Natural Science Foundation of China (No. 12401148 and 12571113), and Beijing Natural Science Foundation (No. 1262014).}

\begin{abstract}
We classify all positive classical solutions of the Lane--Emden system
\begin{equation*}
    \begin{cases}
        -\Delta u=v^p & \text{in }\R^N,\\
        -\Delta v=u^q & \text{in }\R^N,
    \end{cases}
\end{equation*}
where $N\geq3$ and $p,q>0$ satisfy
\begin{equation*}
    \frac{1}{p+1}+\frac{1}{q+1}=\frac{N-2}{N}.
\end{equation*}
Without assuming boundedness, decay at infinity, or finite energy, we prove that every such solution has the form
\begin{equation*}
    \begin{cases}
        u(x)=\lambda^{\frac{N}{q+1}}U(\lambda(x-a)),\\
        v(x)=\lambda^{\frac{N}{p+1}}V(\lambda(x-a)),
    \end{cases}
\end{equation*}
for some $a\in\R^N$ and $\lambda>0$, where $(U,V)$ is the positive radial ground state centered at the origin and normalized by $U(0)=1$. Consequently, both components are radially symmetric about the same point and satisfy
\begin{equation*}
    \int_{\R^N}u^{q+1}\,\dx=\int_{\R^N}v^{p+1}\,\dx<\infty.
\end{equation*}
Our result extends the classical classification of positive solutions of the critical Sobolev equation by Caffarelli, Gidas and Spruck \cite{CGS1989} and Chen and Li \cite{CL1991} to the critical Lane--Emden system.
\end{abstract}

\date{}
\subjclass[2020]{Primary 35J47; Secondary 35B06, 35B08, 35B33.}
\keywords{Lane--Emden system, Positive solutions, Radial symmetry, Classification.}
\maketitlewithmetadata

\section{Introduction}\label{sec:introduction}

We consider the  positive classical solutions for the following Lane--Emden system:
\begin{equation}\label{Main1}
    \begin{cases}
        -\Delta u=v^p & \text{in }\R^N,\\
        -\Delta v=u^q & \text{in }\R^N,
    \end{cases}
\end{equation}
where $N\geq3$, and the exponents $p,q>0$ lie on the critical hyperbola:
\begin{equation}\label{critical-hyperbola}
    \frac{1}{p+1}+\frac{1}{q+1}=\frac{N-2}{N}.
\end{equation}
Interchanging the two components if necessary, we assume throughout the paper  that $p\leq q$. In particular,
\begin{equation*}
    \frac{2}{N-2}<p\leq\frac{N+2}{N-2}\leq q, \ \  pq>1.
\end{equation*}
Our purpose is to classify these solutions without assuming boundedness, decay at infinity, or finite energy.

\subsection{Research Background} The Lane--Emden equation comes from the study of self-gravitating gaseous spheres in stellar structure, beginning with Lane's work on the temperature of the Sun \cite{Lane1870} and developed systematically in Emden's monograph \cite{Emden1907}. In its classical radial form, the scalar equation reads as:
\begin{equation*}
    w''(r)+\frac{N-1}{r}w'(r)+w(r)^s=0,
\end{equation*}
with $N=3$ in the stellar model. System \eqref{Main1} is a coupled elliptic extension of this scalar equation. For the critical system considered here, its variational and integral structures provide the immediate mathematical motivation.

The critical hyperbola is also the boundary of the subcritical range in the Lane--Emden conjecture. For the scalar equation $-\Delta w=w^s$ in $\R^N$, Gidas and Spruck \cite{GS1981} proved nonexistence of positive classical solutions when $1<s<(N+2)/(N-2)$, without a condition at infinity. For the coupled system, the subcritical condition is
\begin{equation*}
    \frac{1}{p+1}+\frac{1}{q+1}>\frac{N-2}{N}.
\end{equation*}
Souplet \cite{Souplet2009} proved the Lane--Emden conjecture in dimension four and obtained partial nonexistence results in higher dimensions. Recently, OpenAI \cite{OpenAI} resolved the full subcritical Lane-Emden conjecture by developing a new method based on source-layer interval comparisons. This method provides a way to measure the failure of radial symmetry without assuming any decay at infinity. This naturally raises the question of the classification at the critical hyperbola, where positive entire solutions do exist. Our theorem shows that every positive classical entire solution at the critical hyperbola belongs to the ground-state family, without any assumption on its behavior at infinity.

For scalar quasilinear equations and inequalities, Serrin and Zou \cite{SZ2002} proved Cauchy--Liouville theorems and universal bounds, including results in the whole space and exterior domains. Phuc and Verbitsky \cite{PV2008} used nonlinear potential theory to study solvability, estimates, and removable singularities for scalar quasilinear and Hessian equations of Lane--Emden type with measure data.

To explain the role of the  critical hyperbola \eqref{critical-hyperbola}, we set
\begin{equation}\label{scaling-exponents}
    \alpha=\frac{N}{q+1},\ \  \beta=\frac{N}{p+1}.
\end{equation}
The critical relation gives that
\begin{equation*}
    \alpha+\beta=N-2,\ \  \alpha+2=p\beta,\ \  \beta+2=q\alpha.
\end{equation*}
Consequently, one can check that the translations and the dilations
\begin{equation}\label{critical-scaling}
    u_{\lambda,a}(x)=\lambda^\alpha u(\lambda(x-a)),\ \  v_{\lambda,a}(x)=\lambda^\beta v(\lambda(x-a)), \ \  \lambda>0,\ a \in \R^N,
\end{equation}
preserve both the system and the integrals of $u^{q+1}$ and $v^{p+1}$. For sufficiently regular, rapidly decaying pairs, the system is also the Euler--Lagrange system of the Hamiltonian functional
\begin{equation*}
    \mathcal J(u,v)=\int_{\R^N}\nabla u\cdot\nabla v\ \dx -\frac{1}{q+1} \int_{\R^N}u^{q+1}\ \dx -\frac{1}{p+1}\int_{\R^N}v^{p+1}\ \dx.
\end{equation*}
The identity $\alpha+\beta=N-2$ makes all three terms invariant under the translations and the dilations \eqref{critical-scaling}. Thus the hyperbola is the scaling threshold for this variational structure. This is also reflected in Pohozaev-type identities for elliptic systems \cite{Mitidieri1993}. In the scalar Dirichlet problem on bounded domains, Brezis and Nirenberg \cite{BN1983} studied the critical Sobolev equation with lower-order terms, where the failure of compactness of the critical embedding is a central difficulty.

There is a closely related connection with the Hardy--Littlewood--Sobolev (HLS in short) inequality. Let $K(x)=((N-2)|\mathbb S^{N-1}|)^{-1}|x|^{2-N}$. For
\begin{equation*}
    r=\frac{q+1}{q},\ \  s=\frac{p+1}{p},
\end{equation*}
condition \eqref{critical-hyperbola} is equivalent to $1/r+1/s+(N-2)/N=2$, and the Hardy--Littlewood--Sobolev inequality takes the form
\begin{equation*}
    \int_{\R^N}\int_{\R^N} F(x)K(x-y)G(y)\ \dx \dy \leq C\|F\|_{L^r}\|G\|_{L^s}.
\end{equation*}
The Euler--Lagrange equations for nonnegative optimizers, after a normalization of the constants and the substitutions $F=u^q$, $G=v^p$, yield the integral system
\begin{equation*}
    u=K*v^p,\ \  v=K*u^q.
\end{equation*}
Lieb \cite{Lieb1983} proved the existence of optimizers for the sharp Hardy--Littlewood--Sobolev inequality. Under the substitutions above, their integrability gives the natural finite-energy class
\begin{equation}\label{finite-energy-class}
    u\in L^{q+1}(\R^N),\ \  v\in L^{p+1}(\R^N).
\end{equation}

The relation between sharp inequalities and critical equations also appears on other spaces. Beckner \cite{Beckner1993} studied sharp Sobolev inequalities on the sphere and the Moser--Trudinger endpoint. On the Heisenberg group, Jerison and Lee \cite{JL1988} determined the sharp Sobolev inequality and its extremals in connection with the CR Yamabe problem, and Frank and Lieb \cite{FL2012} established sharp Hardy--Littlewood--Sobolev inequalities. These results concern extremals in their respective function spaces and geometries.

\subsection{Known classification results}
When $p=q=(N+2)/(N-2)$ and $u=v=w$, system \eqref{Main1} reduces to
\begin{equation*}
    -\Delta w=w^{\frac{N+2}{N-2}},\  \ \text{in }\R^N.
\end{equation*}
This is the critical Sobolev equation, which is also the equation for a conformally flat metric of constant positive scalar curvature after normalization. The classification of its positive entire solutions, without an a priori finite-energy assumption, is a classical consequence of the symmetry theory of Caffarelli, Gidas, and Spruck \cite{CGS1989}. Chen and Li \cite{CL1991} gave a shorter proof by the method of moving planes. Li and Zhu \cite{LZ1995} used the method of moving spheres to classify nonnegative solutions of conformally invariant scalar equations in a half-space with critical nonlinear boundary conditions. Li and Li \cite{LiLi2005} proved Liouville and Harnack theorems for scalar conformally invariant fully nonlinear equations under suitable structural assumptions. For the scalar critical equation with isolated singularities, Korevaar, Mazzeo, Pacard, and Schoen \cite{KMPS1999} obtained refined asymptotic expansions near the singular points.

For $ 0 < \gamma < N $, the related scalar integral equation is
\begin{equation*}
    w(x)=\int_{\R^N}|x-y|^{\gamma-N} w(y)^{\frac{N+\gamma}{N-\gamma}}\ \dy.
\end{equation*}
Chen, Li, and Ou \cite{CLO2006} classified its positive regular solutions. Li \cite{Li2004} gave a moving-spheres proof of the classification for locally bounded positive solutions and proved regularity under the local assumption $w\in L^{2N/(N-\gamma)}_{\loc}(\R^N)$. As explained in \cite{Li2004}, this answers Lieb's classification question for the scalar HLS equation in that regularity class. The distinction between classifying optimizers and classifying general solutions is also central to the present problem. For unequal exponents, system \eqref{Main1} has two distinct scaling weights and does not reduce to these scalar equations.

For the system \eqref{Main1}, existence of ground states follows from the critical variational theory of Lions \cite{Lions1985}. Hulshof and van der Vorst \cite{HV1996} established uniqueness of the ground state up to translations and dilations and described its asymptotic behavior. We fix a positive radial ground state $(U,V)$ centered at the origin and normalized by $U(0)=1$. The natural homogeneous Sobolev spaces are
\begin{equation*}
    (U,V)\in\dot W^{2,\frac{p+1}{p}}(\R^N) \times\dot W^{2,\frac{q+1}{q}}(\R^N),
\end{equation*}
where $\dot W^{2,t}(\R^N)$ denotes the completion of $ C_c^\infty (\R^N) $ with respect to the norm $\| \Delta \cdot \|_{L^t}$. On the critical hyperbola, the corresponding Sobolev embeddings give \eqref{finite-energy-class}. The decay of $U$ changes at $p=N/(N-2)$, whereas $V$ has the Newtonian decay $|x|^{2-N}$. More precise expansions of the ground state and its derivatives were obtained by Guo, Hu, Peng, and Yuan \cite{GHPY2025}. Frank, Kim, and Pistoia \cite{FKP2021} proved that the ground state is nondegenerate, which says that in the natural linearized class, the kernel consists exactly of the modes generated by translations and dilations.

The finite-energy symmetry theory is also well developed. Chen, Li, and Ou \cite{CLO2005} proved radial symmetry for the associated critical integral system under global integrability assumptions when $ p > 1, q > 1$. Hang \cite{Hang2007} treated the full admissible exponent range, including exponents below one. Together with radial uniqueness, these results classify positive solutions in the finite-energy class. The global integrability hypothesis is essential to the formulation of those theorems: they do not by themselves imply that an arbitrary positive classical solution of \eqref{Main1} satisfies \eqref{finite-energy-class}.

Integral identities provide another way to study critical equations. Ciraolo, Figalli, and Roncoroni \cite{CFR2020} classified positive finite-energy solutions of critical anisotropic $p$-Laplace equations in convex cones with the natural Neumann boundary condition. In a different geometry, Catino, Li, Monticelli, and Roncoroni \cite{CLMR2025} classified all positive solutions of the critical scalar equation on the first Heisenberg group $\mathbb H^1$, without finite-energy or symmetry assumptions. Their result uses the Jerison--Lee identity and integral estimates. These works concern scalar equations; they also show why it is useful to identify precisely when global energy assumptions can be removed.

There are also quantitative results in the finite-energy setting. Figalli and Zhang \cite{FZ2022} proved sharp gradient stability for the Sobolev inequality. Deng, Sun, and Wei \cite{DSW2025} obtained sharp quantitative estimates for Struwe's decomposition, controlling the distance of approximate solutions of the scalar critical equation to sums of bubbles. These results start in a Sobolev energy space. For system \eqref{Main1}, our aim is first to show that every positive classical solution belongs to its natural energy space.

The critical Lane--Emden system arises from the Euler--Lagrange equations for the Hardy--Littlewood--Sobolev inequality with the Newtonian kernel. This variational connection goes back to the work of Lieb \cite{Lieb1983} on sharp constants and optimizers. Our theorem classifies all positive classical  solutions of the associated differential system without any a priori integrability assumption; finite energy follows from the classification. In the scalar case, Chen, Li and Ou \cite{CLO2006} and
Li \cite{Li2004} established classification results for critical integral equations with Riesz kernels. For the Newtonian kernel, our theorem extends this scalar classification to the critical Lane--Emden system.

\subsection{Main result}
Denote the translation and dilation orbit of the normalized ground state by
\begin{equation}\label{ground-state-orbit}
    \mathcal G= \left\{ \left( \lambda^\alpha U( \lambda(\cdot-a)), \lambda^\beta V(\lambda(\cdot-a))\right):  a\in\R^N,\ \lambda>0\right\}.
\end{equation}
Here is our main result.
\begin{thm}\label{thm:main}
    Let $N\geq3$ and let $p,q>0$ satisfy \eqref{critical-hyperbola}. If $(u,v)\in C^2(\R^N)\times C^2(\R^N)$ is a positive solution of \eqref{Main1}, then there exist a unique $a\in\R^N$ and a unique $\lambda>0$ such that
    \begin{equation}\label{main-classification}
        u(x)=\lambda^\alpha U(\lambda(x-a)),\ \   v(x)=\lambda^\beta V(\lambda(x-a)).
    \end{equation}
    Both components are radially symmetric about $a$ and strictly decreasing in $|x-a|$. Their energies satisfy
    \begin{equation}\label{main-energy}
        \int_{\R^N}u^{q+1}\ \dx =\int_{\R^N}v^{p+1} \ \dx  = E_* < \infty, \ \   E_*:=\int_{\R^N}U^{q+1}\ \dx  =\int_{\R^N}V^{p+1} \ \dx.
    \end{equation}
    The pair also belongs to the natural Sobolev spaces:
    \begin{equation*}
        (u,v) \in \dot W^{2,\frac{p+1}{p}}(\R^N) \times \dot W^{2,\frac{q+1}{q}}(\R^N).
    \end{equation*}
\end{thm}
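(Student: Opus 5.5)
We would follow the route of Chen--Li and Li: use the Kelvin transform and the method of moving spheres to get radial symmetry without any decay hypothesis, and then invoke radial uniqueness of the ground state (Hulshof--van der Vorst) to pin down the profile.

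\textbf{Step 1: integral representation.} Since $-\Delta u=v^p\ge0$ and $u>0$ on $\R^N$, $u$ is a positive superharmonic function. The Riesz decomposition then gives
\begin{equation*}
    u=K*v^p+c_1,\qquad v=K*u^q+c_2,
\end{equation*}
with constants $c_1,c_2\ge0$ and with $\int v^p(1+|y|)^{2-N}\,\dy<\infty$ and $\int u^q(1+|y|)^{2-N}\,\dy<\infty$. Suppose $c_1>0$. Then $v\ge K*c_1^q\mathbf 1$ is infinite, because $\int|x-y|^{2-N}\,\dy=\infty$; more precisely, finiteness of $\int u^q|y|^{2-N}$ fails when $u\ge c_1$. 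This contradiction gives $c_1=0$, and symmetrically $c_2=0$. So $(u,v)$ solves the integral system $u=K*v^p$, $v=K*u^q$. We also get $u(x)\ge c|x|^{2-N}$ and $v(x)\ge c|x|^{2-N}$ for large $|x|$.

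\textbf{Step 2: moving spheres.} For $x_0\in\R^N$ and $\mu>0$, define Kelvin transforms with the critical weights:
\begin{equation*}
    u_{x_0,\mu}(x)=\Big(\tfrac{\mu}{|x-x_0|}\Big)^{2\alpha}u\Big(x_0+\tfrac{\mu^2(x-x_0)}{|x-x_0|^2}\Big),\qquad v_{x_0,\mu}(x)=\Big(\tfrac{\mu}{|x-x_0|}\Big)^{2\beta}v\Big(x_0+\tfrac{\mu^2(x-x_0)}{|x-x_0|^2}\Big).
\end{equation*}
The relations $\alpha+2=p\beta$, $\beta+2=q\alpha$ and $\alpha+\beta=N-2$ make $(u_{x_0,\mu},v_{x_0,\mu})$ satisfy the same integral system. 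The resulting difference identity has positive kernels on $\{|x-x_0|>\mu\}$.

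The steps are then the following.
\begin{itemize}[leftmargin=*]
    \item For small $\mu$, show $u_{x_0,\mu}\le u$ and $v_{x_0,\mu}\le v$ outside $B_\mu(x_0)$. Near the sphere this follows from positivity of the derivatives in the radial direction; far away it follows from the lower bound $|x|^{2-N}$ of Step 1, used together with $2\alpha,2\beta\ge N-2$ when compared appropriately.
    \item Define $\bar\mu(x_0)$ as the supremum of admissible $\mu$. If $\bar\mu(x_0)<\infty$ for some $x_0$, the standard integral-equation argument shows equality $u_{x_0,\bar\mu}=u$ and $v_{x_0,\bar\mu}=v$. By the calculus lemma of Li--Zhang this happens for every $x_0$, and it forces
    \begin{equation*}
        u(x)=|x|^{-2\alpha}\,\phi,
    \end{equation*}
    up to the appropriate bubble structure. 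This gives decay $u=O(|x|^{-2\alpha})$ and $v=O(|x|^{-2\beta})$, hence $u\in L^{q+1}$ and $v\in L^{p+1}$.
    \item If instead $\bar\mu(x_0)=\infty$ for all $x_0$, the calculus lemma makes $u$ and $v$ constant. This contradicts $-\Delta u=v^p>0$.
\end{itemize}

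\textbf{Main obstacle.} The delicate point is the exponent asymmetry. When $p<N/(N-2)$ we have $2\alpha<N-2$, so the Kelvin-transformed $u$ decays more slowly than $|x|^{2-N}$ and the starting and far-field comparisons do not close as in the scalar case. There is also the case $p\le1$, where the map $v\mapsto v^p$ is not Lipschitz and the usual linearized estimates break down.

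For the first issue, we would compare the $v$-component first, since it always has Newtonian decay. We would then transfer the inequality to $u$ through the integral identity, using that the kernel difference is strictly positive. For $p\le1$, we would use the mean-value-type estimate
\begin{equation*}
    v_{x_0,\mu}^p-v^p\le p\,v^{p-1}(v_{x_0,\mu}-v)^+,
\end{equation*}
which holds on the set where $v_{x_0,\mu}>v$, with $v$ bounded below there. This is Hang's device, adapted to the narrow-region argument.

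\textbf{Step 3: conclusion.} Once finite energy and radial symmetry about some $a$ are known, the results of Chen--Li--Ou and Hang apply (alternatively, the moving-sphere characterization directly). Radial uniqueness of Hulshof--van der Vorst then identifies $(u,v)$ with an element of $\mathcal G$, with the scale $\lambda$ fixed by $u(a)=\lambda^\alpha$. Uniqueness of $a$ and $\lambda$ follows because $U$ is strictly decreasing and has its maximum only at $0$. The energy identity is scaling invariance together with the Pohozaev equality $\int U^{q+1}=\int V^{p+1}$, obtained by testing the system against $U$ and $V$. The Sobolev membership follows from the HLS inequality applied to $\Delta u=-v^p\in L^{(p+1)/p}$ and $\Delta v=-u^q\in L^{(q+1)/q}$.
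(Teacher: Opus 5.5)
\textbf{There is a genuine gap at the start of Step 2:} for $p\neq q$ your weighted Kelvin transform is not a symmetry of the integral system. Take $x_0=0$, substitute $y=\mu^2z/|z|^2$ in $K*(v_{0,\mu}^p)$, and use $|x-y|=|x|\,|x^{0,\mu}-z|/|z|$ together with $\beta p=\alpha+2$ and $\alpha+\beta=N-2$. This gives
\begin{equation*}
    u_{0,\mu}(x)=\int_{\R^N}K(x-y)\Big(\frac{|x|}{|y|}\Big)^{\beta-\alpha}v_{0,\mu}(y)^p\,\dy,\qquad v_{0,\mu}(x)=\int_{\R^N}K(x-y)\Big(\frac{|x|}{|y|}\Big)^{\alpha-\beta}u_{0,\mu}(y)^q\,\dy .
\end{equation*}
The extra factor equals $1$ only if $\alpha=\beta$, i.e.\ $p=q=\frac{N+2}{N-2}$. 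So there is no difference identity with positive kernels to run moving spheres on, and the obstruction is structural rather than technical.

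Your conclusions are in fact inconsistent with your own Step 1. Equality $v_{x_0,\bar\mu}\equiv v$ (or the Li--Zhang lemma) would force $v(x)\sim \bar\mu^{2\beta}v(x_0)|x|^{-2\beta}$ at infinity. But $2\beta>N-2$ whenever $p<q$, which contradicts $v\geq c|x|^{2-N}$ and also the ground-state asymptotics $V\sim a|x|^{2-N}$. Note also that $2\alpha\le N-2\le 2\beta$, strictly whenever $p<q$, so the inequality ``$2\alpha,2\beta\geq N-2$'' you invoke for the starting step is false.

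The standard Kelvin transform $|x|^{2-N}u(x/|x|^2)$ does preserve $\Delta$, but it produces $-\Delta\bar u=|x|^{-\tau_1}\bar v^p$ and $-\Delta\bar v=|x|^{-\tau_2}\bar u^q$ with $\tau_1=N+2-p(N-2)>0>\tau_2=N+2-q(N-2)$. The increasing weight $|x|^{-\tau_2}$ destroys the monotonicity that moving planes or spheres need. This is exactly why the symmetry results of Chen--Li--Ou and Hang take $u\in L^{q+1}$, $v\in L^{p+1}$ as a hypothesis. Comparing the $v$-component first, or using Hang's device for $p\le 1$, does not touch this obstruction. So Step 2 yields neither symmetry nor decay nor finite energy, and Step 3 has nothing to start from.

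What is missing is a symmetry mechanism that needs neither inversion invariance nor a priori integrability. The paper builds one in the following steps.
\begin{enumerate}
    \item It introduces the nonnegative midpoint defect $-(m_I-m_J)T_e(I,J)$ of superlevel intervals of $u^q$ and $v^p$ on parallel lines. Its vanishing forces radial symmetry of both components about a common point.
    \item On the critical hyperbola the leading terms of the interval identity cancel. This yields $D_R\le C\bigl(E(2R)^{1-\sigma/N}+E(2R)^{1-1/N}\bigr)$.
    \item The local isolation theorem, proved via nondegeneracy, gives a uniform defect gap $D(B_{L_*})\ge\delta_*$ for normalized solutions outside $\mathcal G$.
    \item A doubling and covering argument then gives $1+E(R)\le C\log(2+R)\,(1+E(4R))^{1-\sigma/N}$. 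Iterating this yields $E(R)=O\bigl((\log R)^{N/\sigma}\bigr)=o(R)$, which kills the defect for bounded solutions.
    \item A final blow-up step with the doubling lemma removes the boundedness assumption.
\end{enumerate}
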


Thus finite energy, boundedness, and decay are conclusions of the classification rather than hypotheses. The normalization $U(0)=1$ fixes the radial profile, and the only remaining parameters are its center and scale.

\subsection{Outline of the proof}

Our proof begins with the integral representation of system \eqref{Main1} via the Newtonian kernel $K$ in $\R^N$. We first establish uniform integral estimates for the source terms $v^p$ and $u^q$ on balls, without assuming boundedness or finite energy. The main task is then to prove that every positive solution of \eqref{Main1} is radially symmetric about a common point. Indeed, once radial symmetry is established, a comparison argument for the corresponding ODE system shows that the solution is unique up to translations and dilations.

To establish radial symmetry, our method is based on the source-layer interval comparison method introduced in \cite{OpenAI}. The key observation is that the midpoint term in the interval comparison has a definite sign. In the critical case, we retain this nonnegative term and use it to define a defect that measures the failure of radial symmetry. We then show that the vanishing of this defect forces both components to be radially symmetric about a common point.

To prove that the defect vanishes, we establish an estimate in terms of sublinear powers of the local energy
\begin{equation*}
    E(R)=\int_{B_R}(u^{q+1}+v^{p+1})\,\dx.
\end{equation*}
The key point is that the critical hyperbola allows the leading terms in the integral identities to cancel, leaving only error terms that can be controlled by the local energy.

For bounded positive solutions, we first show that the condition $E(R)=o(R)$ as $R\to\infty$ forces the defect to vanish. To establish this energy growth condition, we combine a local rigidity result with a covering argument to derive an iterative inequality for $E(R)$. Iterating this inequality yields $E(R)=o(R)$, and hence radial symmetry for all bounded positive solutions. Finally, we apply the doubling lemma \cite{PQS2007} to remove the boundedness assumption and complete the classification.

The paper is organized as follows. In section 2, we establish the integral representation, radial uniqueness, and a local isolation theorem, which states that if a positive normalized solution is sufficiently close to the ground state on a fixed ball, then this solution must coincide with the ground state. In section 3, we  introduce the nonnegative defect and show that its vanishing implies radial symmetry of both components about a common point. Section 4 is devoted to establishing  an estimate for the defect in terms of sublinear powers of the local energy $E(R)$ and proving  that $E(R)=o(R)$ forces the defect to vanish for bounded positive solutions. Finally, In Section 5, we complete the proof of the classification by establishing $E(R)=o(R)$ through an iterative argument and removing the boundedness assumption.

\section{Local isolation theorem}\label{sec:preliminaries}

Recall that the system \eqref{Main1} admits a radial ground state $ (U,V) \in \dot{W}^{2,\frac{p+1}{p}} (\R^N) \times \dot{W}^{2,\frac{q+1}{q}} (\R^N) $ with $ U(0) = 1 $. In this section, we prove the local isolation theorem of the ground state $(U,V)$, which says that if a positive normalized solution is close enough to $(U,V)$ in a compact set, then this solution must be  $(U,V)$.

We first recall some known results for $(U,V)$. The decay rate of $ (U, V) $ was first established in \cite{HV1996} and later refined in \cite{GHPY2025}.

\begin{thm}[Hulshof--Van der Vorst  \cite{HV1996}]\label{Decay-GS}
    Assume $(p,q)$ satisfies \eqref{critical-hyperbola} and $ p \leq q $. Then there are two constants $a, b >0$, such that the ground state $ ( U , V )$ of \eqref{Main1} with $U(0) =1$ satisfies
    \begin{align*}
        \lim_{r \to +\infty} r^{N-2} V(r) = a , \ \
        \begin{cases}
            \lim\limits_{r \to +\infty} r^{N -2} U(r) = b, \ \ & \text{if} \ \ \frac{N}{N-2} < p \leq \frac{N+2}{N-2}, \\
            \lim\limits_{r \to +\infty} \frac{r^{N -2}}{\log r} U(r) = b, \ \ & \text{if} \ \ p = \frac{N}{N-2}, \\
            \lim\limits_{r \to +\infty} r^{(N-2)p -2 } U(r) = b , \ \ & \text{if} \ \ \frac{2}{N-2} < p < \frac{N}{N-2}.
        \end{cases}
    \end{align*}
    Furthermore, it holds that
    \begin{align*}
        a^p = b(p( N^2 - 4) - 2N - p^2 (N-2)^2), \ \ \text{when} \ \ \frac{2}{N-2} < p < \frac{N}{N-2}.
    \end{align*}
\end{thm}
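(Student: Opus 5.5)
The plan is to use the radial ODE satisfied by the ground state and match asymptotics. Write $r=|x|$. Then $U,V$ solve
\begin{equation*}
    -(r^{N-1}U')'=r^{N-1}V^p,\qquad -(r^{N-1}V')'=r^{N-1}U^q,
\end{equation*}
with $U'(0)=V'(0)=0$. Both components are positive and strictly decreasing, and membership in the energy space forces $U,V\to0$ as $r\to\infty$. Integrating gives the two exterior representations
\begin{equation*}
    -r^{N-1}U'(r)=\int_0^r s^{N-1}V(s)^p\,\ds,\qquad -r^{N-1}V'(r)=\int_0^r s^{N-1}U(s)^q\,\ds .
\end{equation*}
Together with decay at infinity, these yield
\begin{equation*}
    U(r)=\int_r^\infty t^{1-N}\int_0^t s^{N-1}V^p\,\ds\,\dt,
\end{equation*}
and the analogous formula for $V$.

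First I would obtain crude a priori bounds. Monotonicity gives $r^N U(r)^q\lesssim\int_0^r s^{N-1}U^q\,\ds$. Finite energy, $\int U^{q+1}<\infty$, together with monotonicity, gives $U(r)\lesssim r^{-N/(q+1)}=r^{-\alpha}$, and similarly $V(r)\lesssim r^{-\beta}$. The key step is to show that $\int_0^\infty s^{N-1}U^q\,\ds<\infty$. Since $q\alpha=\beta+2$, the crude bound only gives $U^q\lesssim r^{-\beta-2}$, which is not integrable against $r^{N-1}$ because $\beta<N-2$. So I would bootstrap. Feed $V\lesssim r^{-\beta}$ into the $U$ formula and get $U\lesssim r^{2-p\beta}=r^{-\alpha}$, which is no gain at first. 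The gain has to come from the energy tails being $o(1)$. Using $\int_{B_{2r}\setminus B_r}U^{q+1}\to0$, I would get $U(r)=o(r^{-\alpha})$ and $V(r)=o(r^{-\beta})$. I would then run a Hölder/Kelvin-type improvement, in the spirit of Hulshof--van der Vorst or a Pohozaev argument, until $U^q r^{N-1}$ becomes integrable. Concretely, I would first show that $V$ is superharmonic and bounded below by $c\,r^{2-N}$, and then that $r^{N-2}V$ is bounded above. The upper bound for $r^{N-2}V$ is the main obstacle, because it requires a gain of $N-2-\beta$ in the decay exponent. I would attempt this by iterating the integral representations with the smallness of the tails of $\int U^{q+1}$, which should give a geometric improvement of the exponents at each iteration.

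Once $m_V:=\int_0^\infty s^{N-1}U^q\,\ds<\infty$ is known, we have $-r^{N-1}V'\to m_V$. Hence $V(r)=\int_r^\infty(-V')\sim \frac{m_V}{N-2}r^{2-N}$, which gives $a=m_V/(N-2)$. For $U$, substitute $V^p\sim a^p r^{-(N-2)p}$ into $-r^{N-1}U'=\int_0^r s^{N-1}V^p$ and split into three cases.

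\emph{Case $p>N/(N-2)$.} The integral $\int_0^\infty s^{N-1}V^p\,\ds$ converges, and $U\sim b\,r^{2-N}$ with $b=\frac{1}{N-2}\int_0^\infty s^{N-1}V^p\,\ds$.

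\emph{Case $p=N/(N-2)$.} Here $\int_0^r s^{N-1}V^p\,\ds\sim a^p\log r$. L'Hôpital's rule then gives $U\sim \frac{a^p}{N-2}r^{2-N}\log r$.

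\emph{Case $p<N/(N-2)$.} Here $\int_0^r s^{N-1}V^p\,\ds\sim \frac{a^p}{N-(N-2)p}r^{N-(N-2)p}$. Therefore $-U'\sim \frac{a^p}{N-(N-2)p}r^{1-(N-2)p}$. Integrating from $r$ to $\infty$, which is allowed since $(N-2)p>2$, gives $U\sim b\,r^{2-(N-2)p}$ with
\begin{equation*}
    b=\frac{a^p}{(N-(N-2)p)((N-2)p-2)}.
\end{equation*}
Expanding the product,
\begin{equation*}
    (N-(N-2)p)((N-2)p-2)=N(N-2)p-2N-(N-2)^2p^2+2(N-2)p=p(N^2-4)-2N-p^2(N-2)^2,
\end{equation*}
which is exactly the stated identity $a^p=b\bigl(p(N^2-4)-2N-p^2(N-2)^2\bigr)$.

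In all three cases the limits are computed by L'Hôpital's rule, which applies because the integrands are asymptotically pure powers. Positivity of $a$ and $b$ follows from the positivity of $U$ and $V$. Finally, $m_V<\infty$ must be checked in each regime. This is consistent because $U^q$ decays at least like $r^{-q(N-2)}$, or like $r^{-q((N-2)p-2)}$ in the third case, and these rates are integrable against $r^{N-1}$ by the hyperbola relation.
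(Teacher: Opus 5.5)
The paper gives no proof of this statement; it simply quotes Hulshof--van der Vorst. Your proposal therefore has to stand on its own. Everything after the point where $m_V=\int_0^\infty s^{N-1}U^q\,\ds<\infty$ is known is correct. In particular, $-r^{N-1}V'\to m_V$ gives $a=m_V/(N-2)$, and the three regimes for $U$ follow from $-r^{N-1}U'=\int_0^r s^{N-1}V^p\,\ds$ as you describe. Your expansion of $(N-(N-2)p)((N-2)p-2)$ reproduces the stated relation between $a^p$ and $b$. Your closing consistency remark is circular, since it uses asymptotics of $U$ derived from $m_V<\infty$, but it is not needed.

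The genuine gap is the step you flag yourself: proving $m_V<\infty$, equivalently $V(r)\le Cr^{2-N}$. This is the whole content of the theorem.

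The mechanism you propose, iterating the integral representations using small energy tails, cannot improve the exponent. Suppose $U\le Ar^{-\gamma}$ for $r\ge R$. The representations then give $V\lesssim r^{-(q\gamma-2)}$, plus an $r^{2-N}$ term from $B_R$, and hence $U\lesssim r^{-(pq\gamma-2(p+1))}$. On the critical hyperbola $\alpha(pq-1)=2(p+1)$, so $\gamma=\alpha$ is exactly the fixed point of $\gamma\mapsto pq\gamma-2(p+1)$, and it is repelling. Starting from $U=o(r^{-\alpha})$, you only shrink the coefficient of the $r^{-\alpha}$ term (roughly $\varepsilon\mapsto C\varepsilon^{pq}$), while the exponent stays at $\alpha$. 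This is just scale invariance. Your iteration does work once you have a first strict gain $U\lesssim r^{-\alpha-\delta}$ with $\delta>0$: the excess $\delta$ is then multiplied by $pq$ each round until it saturates at the stated rates. Producing that first gain needs a genuinely linear argument that uses smallness, and the proposal does not contain one.

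For $p\ge1$, one way to get the gain is the following.
\begin{itemize}
\item On $\{|x|>R\}$, the pair satisfies $-\Delta U\le V^{p-1}V$ and $-\Delta V\le U^{q-1}U$.
\item With $\varepsilon=\sup_{|x|\ge R}(|x|^\alpha U+|x|^\beta V)\to0$, the coefficients obey $V^{p-1}U^{q-1}\le\varepsilon^{p+q-2}|x|^{-4}$.
\item Compare with power supersolutions $\Phi=A(|x|^{-\alpha-\delta}+\eta|x|^{-\alpha+\delta})$ and $\Psi=\kappa A(|x|^{-\beta-\delta}+\eta|x|^{-\beta+\delta})$, with the ratio $\kappa$ chosen to balance the two equations. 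Apply the maximum principle to $\max\{U/\Phi,V/\Psi\}$, then let $\eta\to0$.
\end{itemize}

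When $p<1$, which is possible for $N\ge5$, this linearization fails, because $V^{p-1}$ is large where $V$ is small. A further device is then required. For example, you can work with $U^{1/p}$, which satisfies $-\Delta U^{1/p}\le p^{-1}U^{1/p-1}V^p$, and linearize via Young's inequality. Alternatively, you can analyze the autonomous Emden--Fowler system for $(r^\alpha U,r^\beta V)$ in $t=\log r$ near its hyperbolic equilibrium. Without such an argument, $V\sim a r^{2-N}$, and with it all the stated asymptotics, remain unproved.
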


The ground state $ (U, V) $ is non-degenerate, which means that among solutions vanishing at infinity, the kernel of the linearized operator of \eqref{Main1} at $ (U, V) $ is spanned by the translation and dilation derivatives and has dimension $N+1$.

\begin{thm}[Frank--Kim--Pistoia \cite{FKP2021}]\label{non-degeneracy}
    Suppose $(U,V)$ is a ground state of \eqref{Main1}. Then all classical solutions $(\phi, \psi)$ of the system
    \begin{align*}
        \begin{cases}
            - \Delta \phi = p V^{p-1} \psi, \ \ & \text{in} \ \ \R^N, \\
            - \Delta \psi = q U^{q-1} \phi, \ \ & \text{in} \ \ \R^N,
        \end{cases}
    \end{align*}
    with $ |\phi| + |\psi| \to 0$ as $|x| \to +\infty$ are linear combinations of $(\partial_{x_i}U, \partial_{x_i} V)$, $ i =1,2,\cdots, N$ and
    \begin{align*}
        \left( x \cdot \nabla U + \frac{N}{q+1} U, \ \   x \cdot \nabla V + \frac{N}{p+1} V \right).
    \end{align*}
\end{thm}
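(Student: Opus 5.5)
The plan is to decompose a decaying kernel element $(\phi,\psi)$ in spherical harmonics and to rule out every mode except those produced by translations and dilations. Write $\phi=\sum_{k\ge0}\phi_k(r)Y_k(\theta)$ and $\psi=\sum_{k\ge0}\psi_k(r)Y_k(\theta)$, where $Y_k$ runs over an orthonormal basis of spherical harmonics of degree $k$ with eigenvalue $\lambda_k=k(k+N-2)$. Since $U,V$ are radial, each pair of coefficients solves the ODE system
\begin{equation*}
  -\Delta_k\phi_k=pV^{p-1}\psi_k,\qquad -\Delta_k\psi_k=qU^{q-1}\phi_k,\qquad \Delta_k:=\partial_r^2+\tfrac{N-1}{r}\partial_r-\tfrac{\lambda_k}{r^2}.
\end{equation*}
The solutions are regular at $r=0$ and decay at infinity. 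The known decay of $(U,V)$ from Theorem \ref{Decay-GS} lets me compare the potentials $V^{p-1}$, $U^{q-1}$ with $r^{-2}$ at infinity. Using it, I first upgrade mere decay of $(\phi_k,\psi_k)$ to explicit power decay, $\phi_k=O(r^{2-N})$ and $\psi_k=O(r^{2-N})$ up to the same logarithmic or slower correction that $U$ has when $p\le N/(N-2)$. I would do this through the integral representation $\phi_k=K_k*(pV^{p-1}\psi_k)$ and a bootstrap. This is what makes all boundary terms below vanish.

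\textbf{Modes $k\ge1$.} The pair $(a,b):=(-U',-V')$ is positive on $(0,\infty)$ and solves the $k=1$ system. For $k\ge1$ I pair the equation for $\phi_k$ with $b$ and the equation for $\psi_k$ with $a$, integrating against $r^{N-1}\,dr$:
\begin{equation*}
  \int_0^\infty\!\big[(-\Delta_k\phi_k)\,b-\phi_k(-\Delta_1 b)\big]r^{N-1}dr+\int_0^\infty\!\big[(-\Delta_k\psi_k)\,a-\psi_k(-\Delta_1 a)\big]r^{N-1}dr .
\end{equation*}
After substituting the equations, the coupling terms $pV^{p-1}\psi_k b$ and $qU^{q-1}\phi_k a$ cancel crosswise. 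Integrating by parts, the second-order terms leave only boundary terms, which vanish. What remains is
\begin{equation*}
  (\lambda_k-\lambda_1)\int_0^\infty(\phi_k b+\psi_k a)\,r^{N-3}\,dr=0 .
\end{equation*}
For $k\ge2$ I want a contradiction. To get it, I first show that $\phi_k$ and $\psi_k$ have a common fixed sign. The system is cooperative, since the coupling coefficients $pV^{p-1},qU^{q-1}$ are positive. The positive pair $(a,b)$ is a strict supersolution of the $k$-th system, because $\lambda_k>\lambda_1$. A maximum principle for cooperative systems with a positive supersolution then applies: a sliding argument on $\phi_k-t a$, $\psi_k-t b$ with the optimal $t$ shows that a nontrivial decaying solution cannot exist, or at least cannot change sign. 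Either outcome, combined with the identity above, forces $\phi_k=\psi_k\equiv0$. For $k=1$ the same maximum-principle argument shows that $(\phi_1,\psi_1)$ is a multiple of $(a,b)$. This gives the translation modes $(\partial_{x_i}U,\partial_{x_i}V)$.

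\textbf{Mode $k=0$.} Solutions regular at the origin form a two-dimensional space, parametrized by $(\phi_0(0),\psi_0(0))$. Differentiating the family $\lambda^\alpha U(\lambda x)$, $\lambda^\beta V(\lambda x)$ at $\lambda=1$ gives one decaying solution, namely the dilation mode in the statement. It remains to show that an independent regular solution does not decay. My first attempt would be to differentiate, with respect to a shooting parameter, the radial solutions with initial data $(1,\mu)$. By radial uniqueness, the ground state corresponds to a single $\mu_*$. For $\mu\ne\mu_*$, the radial solutions either cross zero at a finite radius or fail to decay; this classical dichotomy for the radial critical system follows from the Pohozaev identity. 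Linearizing in $\mu$ at $\mu_*$ then produces a solution transversal to the dilation mode. To show it does not decay, I would use a Wronskian-type quantity $r^{N-1}(\phi_0'\,\tilde\psi-\phi_0\tilde\psi'+\psi_0'\,\tilde\phi-\psi_0\tilde\phi')$, which is conserved for two radial kernel solutions. If both solutions decayed, this conserved quantity would tend to $0$ at infinity and hence vanish identically. That would force linear dependence through the initial data at $r=0$.

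I expect the main obstacle to be this $k=0$ step, together with the sign/maximum-principle claim for $k\ge2$. Both need careful control of the asymptotics in the slow-decay regime $p\le N/(N-2)$, where $U\sim r^{2-(N-2)p}$ or carries a logarithm. In that regime, verifying that the boundary terms in the Wronskian and integration-by-parts identities vanish requires the sharp rates of Theorem \ref{Decay-GS}, not mere decay.
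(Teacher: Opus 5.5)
The paper gives no proof of this statement; it is imported from \cite{FKP2021}, so your proposal has to stand on its own. Your treatment of the nonradial modes is the standard one and can be made to work. The radial step $k=0$, which you yourself flag, fails as written.

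\textbf{The Wronskian carries no information.} The quantity $W=r^{N-1}(\phi_0'\tilde\psi-\phi_0\tilde\psi'+\psi_0'\tilde\phi-\psi_0\tilde\phi')$ is indeed constant. But for \emph{any} two solutions regular at the origin the bracket is bounded near $r=0$, so $W(r)\to0$ as $r\to0$ and $W\equiv0$, regardless of behaviour at infinity. The two-dimensional space of regular solutions is isotropic for this form. For example, the dilation mode (data $(\alpha,\beta V(0))$) and your $\partial_\mu$-solution (data $(0,1)$) are linearly independent and still have $W\equiv0$. So $W\equiv0$ cannot force linear dependence.

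\textbf{The shooting dichotomy does not rescue it.} Knowing that $(u_\mu,v_\mu)$ is not a positive decaying solution for $\mu\neq\mu_*$ is qualitative. It says nothing about whether $\partial_\mu(u_\mu,v_\mu)|_{\mu=\mu_*}$ decays, and that transversality is precisely what has to be proved.

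\textbf{The missing idea} is the cooperative sign propagation already used in the proof of Lemma~\ref{radial-uniqueness}. Let $(\phi,\psi)$ be the regular radial solution of the linearized system with $\phi(0)=0$, $\psi(0)=1$. We have $\phi'(r)=-r^{1-N}\int_0^r s^{N-1}pV^{p-1}\psi\,\ds$ and $\psi'(r)=-r^{1-N}\int_0^r s^{N-1}qU^{q-1}\phi\,\ds$. Hence $\phi<0$ and $\psi'>0$ near $0$, and a continuity argument gives $\phi<0$, $\psi\ge1$ on $(0,\infty)$. So this solution does not decay. Since the dilation mode decays, the decaying regular radial solutions are exactly its multiples, and the radial case is elementary.

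\textbf{The same device handles $k=1$.} Your sliding argument is less clean there: $(a,b)$ is only a solution, not a strict supersolution, so you would have to control $\phi_1/a$ and $\psi_1/b$ at both endpoints. Instead, write $\phi_1=r\tilde\phi$, $\psi_1=r\tilde\psi$, which turns $\Delta_1$ into the radial Laplacian in dimension $N+2$. The regular solution with $(\tilde\phi,\tilde\psi)(0)=(0,1)$ then satisfies $\psi_1\ge r$. Meanwhile $(a,b)$ has positive data $(V(0)^p/N,\,U(0)^q/N)$, so the decaying mode-$1$ solutions are exactly the multiples of $(a,b)$.

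\textbf{Decay for $k\ge2$.} Your sliding argument needs $\phi_k/a\to0$ and $\psi_k/b\to0$ as $r\to\infty$. Since $a=-U'\sim r^{1-N}$ in the fast-decay case, your target bound $O(r^{2-N})$ is too weak. You need the extra decay from the mode-$k$ Green function $\min(r,s)^k\max(r,s)^{2-N-k}$, iterated through both equations. Moreover, for $p<1$ (possible when $N\ge5$) the coefficient $V^{p-1}$ grows at infinity, so it cannot be compared with $r^{-2}$. The bootstrap must therefore exploit the coupling, for instance by first establishing $|\psi|\le CV$, so that $V^{p-1}|\psi|\le CV^p$.
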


The decay and integrability properties of the ground state are well known. For an arbitrary positive solution $(u,v)$ of \eqref{Main1}, however, boundedness, decay, and finite energy are not assumed. We first establish an integral representation.

 Let $K$ be the fundamental solution of $-\Delta$ in $\R^N$, given by
\begin{align*}
    K(x) = \frac{1}{(N - 2) |\mathbb{S}^{N-1}| \cdot |x|^{N-2}}.
\end{align*}
Whenever the integral is well defined, we use $ \mathcal{I}(h)$ to denote $$K * h(x) = \int_{\R^N} K(x-y) h(y) \ \dy.$$

\begin{prop}\label{lem-int-repre}
    Suppose $(u,v)$ is a positive solution of \eqref{Main1}. Then, for all $a\in\R^N$ and $r>0$, we have
    \begin{align}\label{int-repre-uv}
        u(x) = \int_{\R^N} K(x-y) v^p(y) \ \dy = \mathcal{I} (v^p)(x), \ \ v(x) =\int_{\R^N} K(x-y) u^q(y) \ \dy = \mathcal{I} (u^q)(x),
    \end{align}
    and
    \begin{align}\label{int-uv-bound}
        \int_{B_r(a)} u^q \leq C r^{\alpha}, \ \ \int_{B_r(a)} v^p \leq C r^{\beta},
    \end{align}
    where $\alpha = \frac{N}{q+1}$, $ \beta = \frac{N}{p+1} $. Moreover, we have
    \begin{align}\label{int-gradient-uv}
         \nabla u(x) = \int_{\R^N} \nabla K(x-y) v^p(y) \ \dy, \ \  \nabla v(x) =\int_{\R^N}\nabla  K(x-y) u^q(y) \ \dy.
    \end{align}
    Furthermore, if $u$ and $v$ are bounded in $\R^N$, then
    \begin{align}\label{gradient-uv-bound}
        ||\nabla u ||_{L^{\infty}} \leq C(||v||_{ L^{\infty} }^p + ||u||_{L^{\infty}} ), \ \ ||\nabla v ||_{L^{\infty}} \leq C(||u||_{ L^{\infty} }^q + ||v||_{L^{\infty}} ).
    \end{align}
\end{prop}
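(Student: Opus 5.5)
We first show that $u\ge \mathcal I(v^p)$ and $v\ge \mathcal I(u^q)$, with all the integrals finite. Fix $x$ and $R>0$. Let $G_R$ be the Green function of $-\Delta$ on $B_R(0)$. Since $u>0$, Green's representation gives
\begin{align*}
u(x)\ge \int_{B_R}G_R(x,y)v^p(y)\,\dy .
\end{align*}
Letting $R\to\infty$, monotone convergence yields $\mathcal I(v^p)(x)\le u(x)<\infty$, and the same holds for $v$.

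Next, set $w=u-\mathcal I(v^p)\ge0$. It is harmonic, since $-\Delta\mathcal I(v^p)=v^p$ in the distributional sense and hence classically, $v^p$ being continuous. A nonnegative harmonic function on $\R^N$ is constant by Liouville's theorem, so $u=c_1+\mathcal I(v^p)$ and $v=c_2+\mathcal I(u^q)$ with $c_1,c_2\ge0$.

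Now suppose $c_1>0$. Then $u\ge c_1$, so $v\ge \mathcal I(c_1^q)=\infty$, which is a contradiction. Hence $c_1=0$, and likewise $c_2=0$. This gives \eqref{int-repre-uv}.

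For \eqref{int-uv-bound} we use a bootstrap. For $y\in B_r(a)$ we have $K(x-y)\ge c\,r^{2-N}$ whenever $|x-a|<r$, so
\begin{align*}
u(x)\ge c\,r^{2-N}\int_{B_r(a)}v^p=:c\,r^{2-N}M_r \quad\text{on } B_r(a).
\end{align*}
Substituting into $v=\mathcal I(u^q)$ gives $v\ge c\,r^{2-N}\cdot r^N\,(r^{2-N}M_r)^q$ on $B_r(a)$. Integrating $v^p$ over $B_r(a)$ then gives
\begin{align*}
M_r\ge c\,r^{N}\,r^{2p}\,r^{(2-N)pq}M_r^{pq}.
\end{align*}
Since $pq>1$, this rearranges to
\begin{align*}
M_r^{pq-1}\le C\,r^{(N-2)pq-2p-N}.
\end{align*}
Using $\beta=\frac{N}{p+1}$ together with the relations $\alpha+2=p\beta$ and $\beta+2=q\alpha$, one checks that $(N-2)pq-2p-N=\beta(pq-1)$. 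This yields $M_r\le Cr^\beta$, and the symmetric argument gives $\int_{B_r(a)}u^q\le Cr^\alpha$. A careful version uses the balls $B_{2r}$ and $B_r$ with $|x-y|\le 3r$. Finiteness of $M_r$ is already known, so the division is legitimate. This step is the main point, and it is the one at which the exponent identity must match.

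For \eqref{int-gradient-uv}, we differentiate under the integral sign. Decompose $v^p=v^p\chi_{B_{2R}}+v^p\chi_{B_{2R}^c}$ for $|x|<R$. The local part is fine because $|\nabla K|\sim|x-y|^{1-N}$ is locally integrable and $v^p$ is continuous. For the far part, dyadic annuli together with \eqref{int-uv-bound} give
\begin{align*}
\sum_k (2^kR)^{1-N}(2^kR)^\beta<\infty,
\end{align*}
which converges since $\beta<N-1$ (because $\beta<N-2$). This justifies dominated differentiation.

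Finally, for \eqref{gradient-uv-bound}, split the gradient integral into $|x-y|<1$ and $|x-y|\ge1$. The first part is bounded by $C\|v\|_\infty^p$. For the second part, I would instead use interior gradient estimates: on $B_1(x)$,
\begin{align*}
|\nabla u(x)|\le C\bigl(\sup_{B_1(x)}u+\sup_{B_1(x)}|\Delta u|\bigr)\le C\bigl(\|u\|_\infty+\|v\|_\infty^p\bigr),
\end{align*}
which is exactly the stated form. The estimate for $\nabla v$ is symmetric.
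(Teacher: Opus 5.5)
Your proposal is correct. Its skeleton is the paper's: Green functions with monotone convergence, Liouville for the leftover constants, a local mass bound driven by $pq>1$, and differentiation under the integral. Several individual steps, however, go a different way.

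\textbf{The constants.} You use that $\int_{\R^N}K=\infty$: if $u\ge c_1>0$ then $\mathcal I(u^q)\equiv+\infty$, which contradicts $\mathcal I(u^q)\le v$. The paper instead proves \eqref{int-uv-bound} first, which needs only $u\ge\mathcal I(v^p)$ and $v\ge\mathcal I(u^q)$, and then compares $c_u^q|B_r|\sim r^N$ with $Cr^\alpha$. Your argument is shorter and does not depend on the mass bound.

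\textbf{The mass bound \eqref{int-uv-bound}.} You run the bootstrap directly on $B_r(a)$, so you must check $(N-2)pq-2p-N=\beta(pq-1)$. This does hold: both sides equal $2(q+1)$ on \eqref{critical-hyperbola}. The paper works on unit balls and transports the bound by the invariance \eqref{critical-scaling}, which hides this bookkeeping.

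\textbf{The gradient formula \eqref{int-gradient-uv}.} Your dyadic bound on the far part is uniform in $|x|<R$, which is exactly the domination needed to differentiate under the integral. The paper instead bounds the far part pointwise by $(N-2)u(x)$, using $|\nabla K(z)|=(N-2)K(z)/|z|$.

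\textbf{The bounded case.} The interior gradient estimate for Poisson's equation already bounds $|\nabla u(x)|$ on its own, so the split you begin is superfluous. If you prefer to stay with the representation, the far part closes at once: $|\nabla K(z)|\le(N-2)K(z)$ for $|z|\ge1$ gives at most $(N-2)u(x)\le(N-2)\|u\|_{L^\infty}$. That is the paper's route.

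\textbf{One small imprecision.} Continuity of $v^p$ alone does not make $\mathcal I(v^p)$ a classical $C^2$ function. Weyl's lemma applied to the distributional equation $\Delta w=0$ still gives the harmonicity of $w$ you need, as does the local $C^1$ regularity of $v^p$ (since $v>0$ is $C^2$). So nothing breaks.
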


\begin{proof}
    We first prove that the convolution terms in \eqref{int-repre-uv} are well-defined. Let $w$ be a nonnegative classical solution of $ -\Delta w = h \geq 0 $ in $ \mathbb R^N $. We claim that $ \mathcal{I}h $ is finite everywhere and satisfies $\mathcal{I} h \leq w $. To see this, let $ G_R $ be the Dirichlet Green function of $-\Delta$ in $B_R$ and define
    \begin{align*}
        P_R(x)=\int_{B_R}G_R(x,y)h(y)\ \dy, \  \  x\in B_R.
    \end{align*}
    Since $-\Delta P_R=h$ in $B_R$ and $P_R=0$ on $\partial B_R$, we know that $ w - P_R $ is harmonic in $B_R$ and nonnegative on $\partial B_R$. Then, the maximum principle implies $ 0 \leq P_R(x) \leq w(x)$ in $B_R$.

    For fixed $x,y\in\mathbb R^N$, $x\neq y$, the Dirichlet Green functions $G_R(x,y)$ increase to $K(x-y)$ as $R\to\infty$. Hence, it follows from monotone convergence that
    \begin{align*}
        \mathcal{I}h (x)  = \int_{\mathbb R^N} K(x-y)h(y)\ \dy = \lim_{R\to\infty}P_R(x) \leq w(x)<\infty.
    \end{align*}
     Moreover, $w - \mathcal{I} h$ is a nonnegative harmonic function on $ \mathbb R^N $, which implies that $ w - \mathcal{I} h$ is a constant.

     Applying this representation to system \eqref{Main1}, we obtain
    \begin{align*}
        u = \mathcal{I} (v^p) + c_u, \ \ v = \mathcal{I} (u^q) + c_v,
    \end{align*}
    for some constants $ c_u, c_v \geq 0 $. We next prove \eqref{int-uv-bound}. Choose $ a \in \mathbb R^N $ and let
    \begin{align*}
        M_u(a) = \int_{B_1(a)} u^q(x) \ \dx, \ \ M_v(a) = \int_{B_1(a)} v^p(x) \ \dx.
    \end{align*}
    For $x,y\in B_1(a)$, the kernel $K(x-y)$ is bounded below by a positive constant $c_N$ depending only on $N$. Thus
    \begin{align}\label{M-u-M-v}
        u(x) \geq \mathcal{I}(v^p)(x) \geq \int_{B_1(a)} K(x-y) v^p(y) \ \dy \geq c_N M_v(a), \ \ v(x)\geq \mathcal{I} (u^q) (x) \geq c_N M_u(a),
    \end{align}
    for $ x \in B_1(a) $. Integrate the $p$ and $q$ power of \eqref{M-u-M-v} over $B_1(a)$, it follows that
    \begin{align}\label{Mu-Mv}
        M_u(a) \geq c M_v^q(a), \ \  M_v(a) \geq c M_u^p(a),
    \end{align}
    where $c$ is a constant depending on $N,p,q$. Combining the two inequalities in \eqref{Mu-Mv}, we obtain
    \begin{align*}
        M_u (a) \geq c M_u^{pq}(a), \ \  M_v(a) \geq c M_v^{pq}(a).
    \end{align*}
    From \eqref{critical-hyperbola}, we know that $ pq>1 $, and hence $M_u(a)$ and $M_v(a)$ are bounded above by a constant depending only on $N, p ,q$. Next, define
    \begin{align*}
        \tilde{u}(x) = r^\alpha u(a+rx), \ \ \tilde{v}(x)=r^\beta v(a+rx),
    \end{align*}
    where $\alpha=N/(q+1)$ and $\beta=N/(p+1)$. Then $( \tilde{u}, \tilde{v} )$ also solves \eqref{Main1}. Hence, we have
    \begin{align*}
        C & \geq \int_{B_1} \tilde{u}^q(x) +  \tilde{v}^p(x) \ \dx  \\
        & = r^{\alpha q - N} \int_{B_r(a)} u^q(x) \ \dx +  r^{\beta p - N} \int_{B_r(a)} v^p(x) \ \dx \\
        & = r^{ -\alpha } \int_{B_r(a)} u^q(x) \ \dx +  r^{ - \beta} \int_{B_r(a)} v^p(x) \ \dx .
    \end{align*}
    This proves \eqref{int-uv-bound}.

    We next prove that $c_u = c_v = 0$. Suppose on the contrary that $ c_u > 0 $. Since $u = \mathcal{I}(v^p) + c_u \geq c_u$, it follows from \eqref{int-uv-bound}  that
    \begin{align*}
        Cr^\alpha \geq \int_{B_r(a)} u^q(x) \ \dx \geq c_u^q |B_r| = |B_1(0)| c_u^q r^N.
    \end{align*}
    Because $ \alpha = N/(q+1) < N$, letting $ r \to \infty $ yields a contradiction. Therefore $c_u=0$. Similarly, using $v\geq c_v$ and $\beta=  N/(p+1)<N$, we obtain $c_v=0$. Consequently, \eqref{int-repre-uv} holds.

    Then, we prove \eqref{int-gradient-uv}. From the definition of $K(x)$, we have
    \begin{align*}
        |\nabla K(x)| = \frac{1}{|\mathbb{S}^{N-1}| \cdot |x|^{N-1}} =\frac{N-2}{|x|}K(x).
    \end{align*}
    Hence, for any fixed $x \in \R^N$, the integrals in \eqref{int-gradient-uv} are absolutely convergent, since
    \begin{align}\label{nabla-K}
        \int_{\R^N} |\nabla K(x-y)| v^p(y) \ \dy & =  \int_{|y-x| \leq 1} \left| \nabla K(x-y) \right| v^p(y) \ \dy + \int_{|y-x| \geq 1} \left| \nabla K(x-y) \right| v^p(y) \ \dy \nonumber \\
        & \leq C(N) ||v||_{L^{\infty}(B_1(x))}^p \int_{B_1} |y|^{1-N} \ \dy + (N-2) \int_{\R^N} K(x-y) v^p(y) \ \dy \nonumber \\
        & = C(N) ||v||_{L^{\infty}(B_1(x))}^p + (N-2) u(x) < +\infty.
    \end{align}
    To obtain \eqref{int-gradient-uv}, we split the integral on $\R^N$ into $ B_{R} $ and $B_R^c$ for some $R>0$. The bounded part can be differentiated since $\nabla K\in L^1_{\mathrm{loc}}(\mathbb R^N)$. The unbounded part can be differentiated too by using \eqref{nabla-K} and dominated convergence.

    Finally, if $u$ and $v$ are bounded, a similar argument as \eqref{nabla-K} yields
    \begin{align*}
       |\nabla u (x)| & = \left|  \int_{\R^N} \nabla K(x-y) v^p(y) \ \dy \right| \leq C(N) ||v||_{L^{\infty}}^p + (N-2) ||u||_{L^{\infty}}.
    \end{align*}
    The estimate of $\nabla v$ is similar. Therefore, \eqref{gradient-uv-bound} holds.
\end{proof}

By Proposition \ref{lem-int-repre}, system \eqref{Main1} is equivalent to its integral representation. We will prove that every positive solution $(u,v)$ is radial about a common point. The classification then follows from a comparison for the radial equations. We first prove radial uniqueness for \eqref{Main1}.

Since $U$ and $V$ are radial, we can write system \eqref{Main1} as $$- (r^{N-1} U')' = r^{N-1} V^p \hbox{ and }- (r^{N-1} V')' = r^{N-1} U^q.$$ It then follows from integration that
\begin{align}\label{rU'-rV'}
    - rU'(r)  = r^{-N+2} \int_0^r s^{N-1} V^p(s) \ \ds, \ \ - rV'(r)  = r^{-N+2} \int_0^r s^{N-1} U^q(s) \ \ds.
\end{align}
Integrating once more and using the decay at infinity, we obtain
\begin{align}
    U(r) & = \frac{1}{N-2} \left( \frac{1}{r^{N-2}} \int_0^r s^{N-1} V^p(s) \ \ds + \int_r^{\infty} s V^p(s) \ \ds \right), \label{Ur}\\
    V(r) & = \frac{1}{N-2} \left( \frac{1}{r^{N-2}} \int_0^r s^{N-1} U^q(s) \ \ds + \int_r^{\infty} s U^q (s) \ \ds \right). \label{Vr}
\end{align}
Hence, from $U,V>0$ and \eqref{rU'-rV'}-\eqref{Vr}, we obtain
\begin{align}\label{U'V'-UV}
    -r U'(r) \leq (N-2)U (r) , \ \ -rV'(r) \leq (N-2)V(r).
\end{align}
Moreover, since
\begin{align*}
    \int_0^r s^{N-1} U^q(s) \ \ds \leq (N-2)r^{N-2}V(r),
\end{align*}
and $ V(r) \leq Cr^{-N+2}$, we know that $\int_0^r s^{N-1} U^q(s) \ \ds$ is uniformly bounded with respect to $r$ and thus $ U^q \in L^1(\R^N)$. We now prove radial uniqueness for \eqref{Main1}.

\begin{lemma}\label{radial-uniqueness}
    Suppose $(u,v)$ is a positive solution of \eqref{Main1}, and $ u $, $ v $ are radial about a common point. Then $(u,v)$ is a translation and dilation of the ground state $(U,V)$.
\end{lemma}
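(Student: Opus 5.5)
After a translation we may assume $u$ and $v$ are radial about the origin. By Proposition \ref{lem-int-repre} they are given by the Newtonian potentials \eqref{int-repre-uv}. Rescaling with \eqref{critical-scaling} preserves the system, so we choose $\lambda=u(0)^{-1/\alpha}$ and set $\tilde u(x)=\lambda^\alpha u(\lambda x)$, $\tilde v(x)=\lambda^\beta v(\lambda x)$. This gives $\tilde u(0)=1=U(0)$, and it remains to show $(\tilde u,\tilde v)=(U,V)$. Both pairs solve the radial ODE system
\[
-(r^{N-1}w')'=r^{N-1}z^p,\qquad -(r^{N-1}z')'=r^{N-1}w^q,
\]
with $w'(0)=z'(0)=0$ and $w(0)=1$. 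By standard ODE theory (the singular point $r=0$ is handled by the equivalent Volterra integral equations), a solution is determined by the pair of initial values $(w(0),z(0))$. The task therefore reduces to proving $\tilde v(0)=V(0)$.

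Suppose instead that $\tilde v(0)=\mu\neq V(0)$, say $\mu>V(0)$; the reverse case is symmetric. I will use a comparison (shooting) argument. From the integral form
\[
w(r)=w(0)-\int_0^r s^{1-N}\int_0^s t^{N-1}z^p\,\mathrm dt\,\mathrm ds,
\]
and its analogue for $z$, the solutions satisfy the following. If $(w_1,z_1)$ and $(w_2,z_2)$ have $w_1(0)=w_2(0)$ and $z_1(0)>z_2(0)$, then near $0$ we have $z_1>z_2$, hence $w_1<w_2$. The first step is to find the correct ordering to propagate. Because the coupling is monotone increasing, it is natural to compare $(\tilde u,\tilde v)$ with the dilated ground state $(U_\sigma,V_\sigma)=(\sigma^\alpha U(\sigma\cdot),\sigma^\beta V(\sigma\cdot))$. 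We choose $\sigma$ so that $V_\sigma(0)=\mu$, which makes $U_\sigma(0)=\sigma^\alpha\neq 1$. This gives two entire positive radial solutions with one initial value ordered one way and the other ordered the opposite way.

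The key step, and the one I expect to be the main obstacle, is to show that two distinct entire positive radial solutions must intersect in a way that contradicts positivity or decay. I would use the Pohozaev-type quantity from the integral identities \eqref{Ur}--\eqref{Vr}. Since $(\tilde u,\tilde v)$ is entire and positive, the integral representation and \eqref{int-uv-bound} give $\int_0^r s^{N-1}\tilde u^q\,\mathrm ds\le Cr^{\alpha}$ and $\tilde v(r)\ge c\,r^{2-N}$. The radial Pohozaev identity reads
\[
\frac{d}{dr}\Big[r^{N}w'z'+r^{N-1}(\alpha w z'+\beta z w')+\frac{r^N}{q+1}w^{q+1}+\frac{r^N}{p+1}z^{p+1}\Big]=0 \quad\text{(critical case)}.
\]
Its bracket vanishes at $r=0$, so it vanishes for all $r$. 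Evaluating it along the solution and using \eqref{U'V'-UV} type bounds, I would first show that the solution decays: positivity with zero Pohozaev constant should force $r^{N-2}\tilde v$ bounded. Then $\tilde u^q\in L^1$ as in the argument preceding the lemma, which puts $(\tilde u,\tilde v)$ in the finite-energy class.

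At that point, uniqueness of the radial ground state up to dilation (Hulshof--van der Vorst \cite{HV1996}) forces $(\tilde u,\tilde v)=(U_\sigma,V_\sigma)$, and $\tilde u(0)=1$ gives $\sigma=1$. If the Pohozaev route to decay proves delicate, the fallback is a direct argument. Take the difference $(\tilde u-U,\tilde v-V)$ and the first radius where one component changes sign. Using the monotone Volterra structure, the ordering $\tilde v>V$, $\tilde u<U$ would then persist for all $r$. Combined with $\tilde u>0$ and the Emden–Fowler transformation $w(r)=r^{-\alpha}W(\log r)$, $z(r)=r^{-\beta}Z(\log r)$, which yields an autonomous system, this should show that any positive entire orbit other than the ground-state homoclinic orbit exits the positive quadrant. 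That contradicts positivity.
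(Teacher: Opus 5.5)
There is a genuine gap. Your reduction to proving $\tilde v(0)=V(0)$ and the ODE-uniqueness step are fine. Your opening observation is also the right start: if $w_1(0)=w_2(0)$ and $z_1(0)>z_2(0)$, then $z_1>z_2$ and $w_1<w_2$ near $0$. But neither of your two routes closes.

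In the main route, the claim that ``positivity with zero Pohozaev constant should force $r^{N-2}\tilde v$ bounded'' has no mechanism, and it is the whole difficulty. Every regular solution of the radial system has $P\equiv0$ on its interval of positivity, including the non-ground-state ones, so the identity cannot by itself single out the decaying orbit. Used in the obvious way, together with $r|w'|\le(N-2)w$, it gives only the scale-invariant bound $r^{\alpha}\tilde u+r^{\beta}\tilde v\le C$, which you already have from \eqref{int-uv-bound}. So you never reach the finite-energy class. Even if you did, invoking Hulshof--van der Vorst at that point would amount to citing the radial uniqueness you are asked to prove. In the fallback, the final claim that every positive entire orbit other than the ground state leaves the positive quadrant is just the lemma restated in Emden--Fowler variables.

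The missing idea is that the comparison gives more than persistence of the ordering, and that decay comes for free. Set $h=\tilde u-U$ and $k=\tilde v-V$, so $h(0)=0$ and $k(0)=\mu-V(0)>0$. Then
$h'(r)=-r^{1-N}\int_0^r s^{N-1}(\tilde v^p-V^p)\,\textup{d}s$ and $k'(r)=-r^{1-N}\int_0^r s^{N-1}(\tilde u^q-U^q)\,\textup{d}s$.
A continuity argument shows that $h$ is decreasing and $k$ is increasing on all of $(0,\infty)$. Hence $\tilde v(r)-V(r)\ge\mu-V(0)>0$ for every $r$, not merely $\tilde v>V$.

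On the other hand, every positive radial entire solution is strictly decreasing, since $\tilde v'(r)=-r^{1-N}\int_0^r s^{N-1}\tilde u^q\,\textup{d}s<0$. So \eqref{int-uv-bound} gives $\tilde v(r)^p\le|B_r|^{-1}\int_{B_r}\tilde v^p\le Cr^{\beta-N}\to0$, and likewise $V\to0$, which is a contradiction. Equivalently, $\tilde v\ge\mu-V(0)$ forces $\tilde u(r)\le 1-(\mu-V(0))^p r^2/(2N)$, contradicting positivity of $\tilde u$.

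This is exactly the paper's argument. It needs neither the Pohozaev identity, nor $r^{2-N}$ decay, nor the comparison pair $(U_\sigma,V_\sigma)$.
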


\begin{proof}
    After a translation, we may assume that the common center is the origin. We first prove that $u$ and $v$ both decay to zero at infinity. Since $u$ and $v$ are radial, we have
    \begin{align*}
        u'(r) & = -r^{1-N} \int_0^r s^{N-1} v^p(s) \ \ds < 0,\\
        v'(r) & = -r^{1-N} \int_0^r s^{N-1} u^q(s) \ \ds < 0.
    \end{align*}
    Thus both $u$ and $v$ are strictly decreasing and positive. Taking $ a=0 $ in \eqref{int-uv-bound}, we obtain
    \begin{align*}
        u^q(r) \leq \frac{1}{|B_1| r^N}\int_{B_r(a)} u^q(x) \ \dx \leq C r^{\alpha - N}, \ \ v^p(r) \leq \frac{1}{|B_1| r^N}\int_{B_r(a)} v^p(x) \ \dx \leq C r^{\beta - N}.
    \end{align*}
    Since $\alpha < N, \beta < N$, we have $ v(r) \to 0$ and $ u(r) \to 0$ as $ r \to +\infty$.

    We next show that a positive radial solution is uniquely determined by the value of its first component at the origin. Let $(u_1,v_1)$ and $(u_2,v_2)$ be two positive radial entire solutions satisfying $ u_1(0)=u_2(0) $. We first prove $v_1 (0) = v_2(0)$. Suppose, for a contradiction, that $v_1(0) > v_2(0)$. Let
    \begin{align*}
        h=u_1-u_2,\ \  k=v_1-v_2.
    \end{align*}
    Then $h(0)=0$ and $k(0)>0$ and
    \begin{align}
        h'(r)&=-r^{1-N}\int_0^r s^{N-1}\bigl(v_1^p(s) -v_2^p (s) \bigr)\ \ds, \label{h'}\\
        k'(r)&=-r^{1-N}\int_0^r s^{N-1}\bigl(u_1^q(s) -u_2^q(s) \bigr)\ \ds. \label{k'}
    \end{align}
    By continuity, $ k > 0 $ near the origin and hence $ h'< 0 $, which implies $h(r) < h(0) = 0$ near the origin. By $h(r)<0$ and \eqref{k'}, we have $k'(r)>0$, so $k(r)>k(0)>0$ near the origin. These signs persist by continuity and \eqref{h'}--\eqref{k'}, so $h$ is decreasing and $k$ is increasing on $\R_+$. Thus
    \begin{align*}
        h(r) < 0, \ \  k(r) = v_1(r) - v_2(r) \geq k(0)>0
    \end{align*}
    for all $r>0$. However, this contradicts the fact that $v_1(r), v_2(r) \to 0$ as $r\to\infty$. The case $v_1(0)<v_2(0)$ is also excluded similarly. Hence $ v_1 (0) = v_2(0)$.

    Next, we prove that the two solutions are equal. Integrating the radial equations twice, we have
    \begin{align}
        u_i(r)&=u_i(0)-\int_0^r t^{1-N}\int_0^t s^{N-1} v_i^p (s) \ \ds \dt, \label{u-i}\\
        v_i(r)&=v_i(0)-\int_0^r t^{1-N}\int_0^t s^{N-1} u_i^q(s) \ \ds \dt, \label{v-i}
    \end{align}
    for $i=1,2$. By positivity and continuity, there exist $\rho_0>0$ and $0<m<M$ such that $m\leq u_i(r),v_i(r)\leq M$ for $0\leq r\leq\rho_0$. Let
    \begin{align*}
        L=\max_{m\leq t\leq M}p t^{p-1}+\max_{m\leq t\leq M}qt^{q-1}.
    \end{align*}
    For $ 0 < \rho \leq \rho_0 $, let
    \begin{align*}
        M_\rho = \sup_{0 \leq r \leq \rho }\bigl( |h(r)| + |k(r)| \bigr),
    \end{align*}
    then it follows from \eqref{u-i}, \eqref{v-i} and $ u_1(0) = u_2(0) $, $ v_1(0) = v_2(0) $ that
    \begin{align*}
        M_\rho & \leq \int_0^{\rho} t^{1 - N} \int_0^t s^{N-1} \big( |v_1^p (s) - v_2^p (s)| + | u_1^q (s) - u_2^q (s)| \big)  \ \ds \dt \\
        & \leq L M_{\rho} \int_{0}^{\rho} t^{1-N} \int_0^{t} s^{N-1} \ \ds \dt = \frac{L\rho^2}{2N} M_{\rho} .
    \end{align*}
    Choosing $\rho$ so that $L\rho^2/(2N)<1$, we obtain $M_\rho=0$. Standard ODE uniqueness on intervals away from $r=0$ then gives $(u_1,v_1)=(u_2,v_2)$.

    Finally, choose $\lambda>0$ such that $ \lambda^\alpha U(0)=u(0) $.  Then the critically rescaled ground state
    \begin{align*}
        U_\lambda (r) = \lambda^\alpha U( \lambda r ), \ \  V_\lambda (r) = \lambda^\beta V( \lambda r )
    \end{align*}
    is another positive radial solution with $ U_\lambda(0) = u(0)$. Hence, we have
    \begin{align*}
        u(r) = \lambda^\alpha U(\lambda r), \ \ v(r) = \lambda^\beta V(\lambda r).
    \end{align*}
\end{proof}

By the preceding lemma, it remains to prove radial symmetry for every positive solution. In this section, we prove the following isolation theorem for ground state $(U,V)$, which plays a fundamental role in the blow up analysis in the rest of the paper.

\begin{thm}\label{Isolate-thm}
    Suppose $(U,V)$ is the bounded radial ground state of \eqref{Main1} with $U(0)=1$. There exist constants $ \delta_0, R_0>0$, depending only on $N,p,q$, such that for every positive classical solution $(u,v)$ of \eqref{Main1}, if
    \begin{align*}
        u(0) =1, \ \ \nabla u(0) =0, \ \ || u - U||_{C^2(\overline{B_{R_0}})} + || v - V ||_{C^2(\overline{B_{R_0}})} < \delta_0,
    \end{align*}
    then $ (u,v) = (U,V) $.
\end{thm}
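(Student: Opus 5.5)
Since $\delta_0$ and $R_0$ must be uniform over all positive solutions, I would argue by contradiction and compactness. Suppose there are positive solutions $(u_k,v_k)\neq(U,V)$ with $u_k(0)=1$, $\nabla u_k(0)=0$ and
\[
\|u_k-U\|_{C^2(\overline{B_{k}})}+\|v_k-V\|_{C^2(\overline{B_{k}})}<\tfrac1k .
\]
This takes $R_0=k\to\infty$ and $\delta_0=1/k\to0$ simultaneously. If the contradiction goes through, the theorem holds for some finite pair $(R_0,\delta_0)$. So $(u_k,v_k)\to(U,V)$ in $C^2_{\loc}$.

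\textbf{Step 1: uniform control on the whole space.} Inside $B_k$ the solutions are close to $(U,V)$. Outside, I would use the integral representation of Proposition~\ref{lem-int-repre} and the scale-invariant bounds \eqref{int-uv-bound}. Those bounds give no decay by themselves, so the plan is to exploit the smallness of $\int_{B_k}$ against the $r^\alpha$ and $r^\beta$ bounds. Concretely, I would show that the mass of $v_k^p$ and $u_k^q$ in annuli $B_{2r}\setminus B_r$ with $r\sim k$ is small at every scale. Then I would apply an $\varepsilon$-regularity statement: a solution whose scale-invariant local mass on a ball is below a threshold is bounded on the half ball by that mass. This follows from the argument producing \eqref{Mu-Mv}, namely $M\geq cM^{pq}$, run in quantitative form.

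Combined with a Kelvin-type inversion adapted to the weights $\alpha,\beta$, this should yield decay bounds $u_k\lesssim |x|^{-\alpha'}$ and $v_k\lesssim|x|^{2-N}$ outside $B_{R}$, uniformly in $k$. The decay of $u_k$ would follow the three regimes of Theorem~\ref{Decay-GS}. These bounds give $u_k^{q+1},v_k^{p+1}\in L^1$ uniformly, and convergence $(u_k,v_k)\to(U,V)$ in the weighted sup norms in which Theorem~\ref{non-degeneracy} applies. I expect this step to be the main obstacle. Ruling out mass escaping to infinity, such as a second bubble far away or a non-decaying tail, is exactly where absence of decay hypotheses bites. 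The smallness $\delta_0$ on a large ball has to be converted into global smallness through the HLS-type bootstrap on annuli.

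\textbf{Step 2: blow-up of the difference.} Set $\phi_k=(u_k-U)/\varepsilon_k$ and $\psi_k=(v_k-V)/\varepsilon_k$, where $\varepsilon_k$ is the weighted norm of $(u_k-U,v_k-V)$, which is positive. They satisfy
\[
-\Delta\phi_k=c_k\psi_k,\qquad -\Delta\psi_k=d_k\phi_k,
\]
where $c_k,d_k$ are mean-value coefficients converging to $pV^{p-1}$ and $qU^{q-1}$. The case $p<1$ needs care where $V$ is small; there I would use the uniform decay from Step 1 to control $v_k/V$ from above and below. Writing $\phi_k=K*(c_k\psi_k)$ and using the uniform weighted bounds, Arzelà--Ascoli gives a subsequence converging to a nontrivial solution $(\phi,\psi)$ of the linearized system vanishing at infinity. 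Nontriviality comes from the weighted norm being normalized to $1$, together with tails controlled uniformly.

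\textbf{Step 3: using the normalizations.} By Theorem~\ref{non-degeneracy}, $(\phi,\psi)$ is a combination of the translation modes $(\partial_iU,\partial_iV)$ and the dilation mode $(x\cdot\nabla U+\alpha U,\ x\cdot\nabla V+\beta V)$. The conditions $u_k(0)=1=U(0)$ and $\nabla u_k(0)=0=\nabla U(0)$ pass to the limit as $\phi(0)=0$ and $\nabla\phi(0)=0$. The dilation mode has value $\alpha U(0)=\alpha\neq0$ at the origin, while the translation modes vanish there. Hence the dilation coefficient is zero. The translation modes have gradient at $0$ equal to $\partial_i\nabla U(0)=\frac{U''(0)}{1}e_i$, where $U''(0)=-V(0)^p/N\neq0$. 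Hence their coefficients vanish too. This forces $(\phi,\psi)=0$, contradicting nontriviality, so the theorem follows.
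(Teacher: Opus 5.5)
Your Step 3 matches the paper exactly, including killing the dilation mode with $\phi(0)=0$ and the translation modes with $\nabla\phi(0)=0$ via $D^2U(0)=-\frac{V(0)^p}{N}\operatorname{Id}$, and the overall blow-up and nondegeneracy scheme of Step 2 is the right one. \textbf{The gap is Step 1, on which Step 2 depends.} Normalizing by a two-sided weighted sup norm requires $(u_k-U)/U\to0$ and $(v_k-V)/V\to0$ in $L^\infty(\R^N)$. In particular you need a global upper bound $u_k\le(1+o(1))U$, and nothing in the hypotheses supplies one. They only concern $B_k$. Neither they, nor the universal bounds \eqref{int-uv-bound}, nor the normalizations at $0$ exclude additional mass far outside $B_k$, such as a second, possibly concentrated, bubble. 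Such mass contributes almost nothing to $u_k(0)=\mathcal I(v_k^p)(0)$, yet it violates any uniform decay bound. Ruling it out from local data is essentially the classification problem itself. The tools you name do not do it:
\begin{itemize}
\item For $p\neq q$ the system has no Kelvin-type invariance. The Kelvin transform produces the weights $|x|^{(N-2)p-N-2}$ and $|x|^{(N-2)q-N-2}$.
\item An $\varepsilon$-regularity lemma can only be applied where you already know the local energy is small, which outside $B_k$ you do not.
\item The inequality $M\ge cM^{pq}$ from Proposition~\ref{lem-int-repre} comes from \emph{lower} bounds $u\ge c_NM_v$. It bounds the masses from above, and cannot turn small mass into pointwise upper bounds.
\end{itemize}
Step 2's nontriviality (tails of $(u_k-U)/\varepsilon_k$ controlled relative to $U$) inherits the same gap.

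The missing idea is that the paper never needs upper bounds. Positivity of $K$ and Lemma~\ref{lem-kappa-R} give global \emph{lower} bounds $u_k\ge(1-o(1))U$ and $v_k\ge(1-o(1))V$ for free, via $v_k\ge\mathcal I(u_k^q\chi_{B_R})$. The upper side is controlled only on average, by Lemma~\ref{mixed-integral}: $\int U^qu=\int Vv^p\le E_*$ holds for \emph{every} positive solution, with equality only for $(U,V)$. The argument then runs as follows.
\begin{itemize}
\item Normalize by the one-sided quantity $\varepsilon_k=\max\{\|(U-u_k)_+/U\|_\infty,\|(V-v_k)_+/V\|_\infty\}$, which is positive by the equality case.
\item Then $\phi_k\ge-U$ and $\psi_k\ge-V$. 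Moreover $\int U^q|\phi_k|\le2E_*$, since $\int U^q(u_k-U)\le0$ makes the positive part dominated by the negative part, which is at most $\varepsilon_kU$.
\item The local limit inherits these properties, and Lemma~\ref{linear-estimate} upgrades them to $|\phi|\le CU$, $|\psi|\le CV$, so Theorem~\ref{non-degeneracy} applies.
\item For nontriviality, one propagates local smallness of $(U-u_k)_+/(\varepsilon_kU)$ through the integral equations with the tail bound $\kappa_R$. This contradicts the normalization $\varepsilon_k$.
\end{itemize}
Any far-away mass simply escapes the local limit and is never seen. Without this one-sided mechanism or some other global identity, your Step 1 is an unproved claim of roughly the same strength as the theorem.
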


To prove this theorem, we need several lemmas that allow us to control the difference between a general positive solution and the ground state.

\begin{lemma}\label{mixed-integral}
    Suppose $(U,V)$ is the ground state of \eqref{Main1} and $(u,v)$ is a positive solution of \eqref{Main1}. Then
    \begin{align*}
        0 < \int_{\R^N} U^q u = \int_{\R^N} V v^p \leq E_* : = \int_{\R^N} U^{q+1} = \int_{\R^N} V^{p+1}.
    \end{align*}
    Equality in the inequality holds if and only if $ (u,v) = (U, V)$.
\end{lemma}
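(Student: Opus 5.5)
The identity and positivity should follow from the integral representation in Proposition \ref{lem-int-repre}; the inequality should follow from the fact that the ground state extremizes the sharp Hardy--Littlewood--Sobolev inequality, tested against truncations of $(u,v)$. That second step is the part of the plan I have not verified.

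\emph{Identity and positivity.} By \eqref{int-repre-uv} we have $u=\mathcal I(v^p)$ and $V=\mathcal I(U^q)$, the latter since the radial ground state is itself a positive solution. All integrands are nonnegative, so Tonelli and the symmetry $K(x-y)=K(y-x)$ give
\begin{align*}
\int_{\R^N}U^q u=\int_{\R^N}\int_{\R^N}U^q(x)K(x-y)v^p(y)\,\dy\,\dx=\int_{\R^N}\mathcal I(U^q)\,v^p=\int_{\R^N}V v^p
\end{align*}
as an identity in $[0,\infty]$. Positivity is immediate because $U,u>0$. It then remains to prove that this common value is at most $E_*$; finiteness comes for free.

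\emph{The inequality.} Put $r=\frac{q+1}{q}$ and $s=\frac{p+1}{p}$, so $\frac1r+\frac1s=\frac{N+2}{N}$. Since $(U^q,V^p)$ is an optimizer of the sharp HLS inequality \cite{Lieb1983} and $\int U^{q+1}=\int V^{p+1}=E_*$, the sharp constant is $C_*=E_*^{-2/N}$. I would apply the HLS inequality to truncated data. Take $F_R=u^q\chi_{B_R}$ and $G_R=v^p\chi_{B_R}$, which lie in $L^r$ and $L^s$ by local boundedness. Then compare
\begin{align*}
\Phi(F,G)=\int\!\!\int F K G
\end{align*}
at convex combinations $(1-t)(U^q,V^p)+t(F_R,G_R)$, normalized in $L^r\times L^s$. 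Optimality of $(U^q,V^p)$ should give a first-order inequality of the form
\begin{align*}
\int U^q\,\mathcal I(G_R)+\int V^p\,\mathcal I(F_R)\le \text{(terms from the constraint derivatives)}.
\end{align*}
The constraint derivatives produce $\int U^{q}\,F_R^{1/q}$-type pairings. I would then let $R\to\infty$ by monotone convergence, using $\mathcal I(F_R)\uparrow v$ and $\mathcal I(G_R)\uparrow u$. The aim is to close the estimate with the mixed quantity $A=\int U^q u=\int V v^p$ on both sides, arriving at an inequality like $A\le E_*^{1-\theta}A^{\theta}$ with $\theta<1$; this requires the hyperbola relation. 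If this route fails, the alternative I would try is a two-measure Jensen argument with $f=u/U$ and $g=v/V$, using the probability measures $U^{q+1}/E_*$ and $V^{p+1}/E_*$. In that setting $g$ is a $KU^q$-weighted average of $f^q$, and $f$ is a $KV^p$-weighted average of $g^p$. Duality gives
\begin{align*}
\int f\,U^{q+1}=\int g^p V^{p+1},\qquad \int g\,V^{p+1}=\int f^q U^{q+1}.
\end{align*}
With $p\le1\le q$, these combine with Jensen, though the combination must be arranged so that the unknown and possibly infinite $L^{q+1}$-type quantities cancel.

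\emph{Equality case and main obstacle.} In either route, equality forces equality in a strict convexity (Jensen) step, so $u/U\equiv c$ is constant. Then $v=\mathcal I(u^q)=c^qV$ and $u=\mathcal I(v^p)=c^{pq}U$, so $c=c^{pq}$. Since $pq>1$ this gives $c=1$, that is, $(u,v)=(U,V)$. The main obstacle is the inequality step, because nothing is assumed about integrability of $u^{q+1}$ or $v^{p+1}$. Every use of H\"older, Jensen or HLS must therefore be applied to truncations and passed to the limit monotonically, and the limiting estimate must involve only the mixed quantity $A$ and $E_*$, never the possibly infinite energies of $(u,v)$.
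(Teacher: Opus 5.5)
Your identity and positivity step is fine, but the inequality $S:=\int_{\R^N}U^qu\le E_*$, which is the real content of the lemma, is not established, and your main route, as described, cannot establish it. At $(F_0,G_0)=(U^q,V^p)$ the first variation of $\log\bigl(\Phi(F,G)\,\|F\|_{L^r}^{-1}\|G\|_{L^s}^{-1}\bigr)$ in any direction $(h_1,h_2)$ equals
\[
E_*^{-1}\Bigl(\int h_1\,\mathcal I(V^p)+\int h_2\,\mathcal I(U^q)-\int U^{q(r-1)}h_1-\int V^{p(s-1)}h_2\Bigr).
\]
Since $\mathcal I(V^p)=U=U^{q(r-1)}$ and $\mathcal I(U^q)=V=V^{p(s-1)}$, this vanishes identically. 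So the first-order condition at the optimizer is just the Euler--Lagrange system again and carries no information about $(u,v)$. Note also that the constraint derivative pairs $F_R$ with $U^{q(r-1)}=U$, producing $\int_{B_R}Uu^q$ rather than a pairing like $\int U^qF_R^{1/q}$. Applying sharp HLS globally to the truncations, or going to second order, brings in $\|u^q\chi_{B_R}\|_{L^r}$ and $\|v^p\chi_{B_R}\|_{L^s}$. These are exactly the energies you cannot control, and you give no mechanism to remove them.

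Your fallback is the right idea (it is the paper's argument), but you have not found the combination that closes. Also, the assumption $p\le1\le q$ is false in general: $p\le q$ only gives $\frac{2}{N-2}<p\le\frac{N+2}{N-2}\le q$, and for $N=3$ always $p>2$. Only $q>1$ and $pq>1$ are needed.

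\begin{itemize}
\item Jensen (equivalently H\"older) for $t\mapsto t^q$, with respect to the probability measure $K(x-y)U^q(y)\,\dy/V(x)$, gives $v/V\ge\bigl(\mathcal I(U^{q-1}u)/V\bigr)^q$.
\item Raise this to the power $p$, integrate against $V^{p+1}$, and apply Jensen for $t\mapsto t^{pq}$ with respect to $V^{p+1}\dx/E_*$.
\item The chain closes on $S$ itself through the duality $\int V^p\,\mathcal I(U^{q-1}u)=\int U^{q-1}u\,\mathcal I(V^p)=\int U^qu$. This yields $S\ge E_*^{1-pq}S^{pq}$.
\end{itemize}

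The pairing $\int gV^{p+1}=\int f^qU^{q+1}$ you wrote down plays no role.

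Finally, finiteness does not come for free: $S\ge E_*^{1-pq}S^{pq}$ is vacuous when $S=\infty$, so you must prove $S<\infty$ first. The paper does this using $V\le C(1+|x|)^{2-N}$ (Theorem \ref{Decay-GS}) and $\int_{B_r}v^p\le Cr^\beta$ on dyadic annuli, together with $\beta-(N-2)=-\alpha<0$. With this chain in place, your equality argument ($u=cU$, $c=c^{pq}$, hence $c=1$) goes through as stated.
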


\begin{proof}
   We first show that the integrals in the statement are finite. By Theorem \ref{Decay-GS}, the ground state satisfies $V(x)\leq C(1+|x|)^{2-N}$. Thus, by Proposition \ref{lem-int-repre},
    \begin{align*}
        \int_{\R^N} V v^p & = \int_{B_1} V v^p + \sum_{j=0}^{\infty} \int_{\{2^j<|x|<2^{j+1}\}} V v^p  \leq C \left( \int_{B_1} v^p + \sum_{j=0}^{\infty} 2^{-j(N-2)} \int_{B_{2^{j+1}}} v^p  \right) \\
        & \leq C + C \sum_{j=0}^{\infty} 2^{(j+1) \beta - j(N-2)} \leq C + C \sum_{j=0}^{\infty} 2^{-j\alpha} < +\infty,
    \end{align*}
    where we have used the fact that $ \alpha = N/(q+1)$, $ \beta = N/(p+1) $ and $\alpha + \beta = N -2$. Let $S  =\int_{\R^N} V v^p $, then from Proposition \ref{lem-int-repre}, we have
    \begin{align*}
        S = \int_{\R^N} V v^p =\int_{\R^N} \mathcal{I}( U^q) v^p =  \int_{\R^N} U^q \mathcal{I}(v^p) =  \int_{\R^N}U^q u.
    \end{align*}

    We next prove $S \leq E_*$. Since $q>1$, $v=\mathcal I(u^q)$, and $V=\mathcal I(U^q)$, H\"older's inequality gives
    \begin{align*}
        \mathcal{I}(U^{q-1}u)(x) & \leq \bigl( \mathcal{I} (u^q)(x) \bigr)^{1/q} \bigl( \mathcal{I}(U^q)(x) \bigr)^{(q-1)/q} = v(x)^{1/q}V(x)^{(q-1)/q}.
    \end{align*}
    It follows that
    \begin{align}\label{S-1}
        S  = \int_{\R^N}V^{p+1} \left( \frac{v}{V} \right)^p \geq \int_{\R^N} V^{p+1} \left( \frac{ \mathcal{I}(U^{q-1}u)}{V} \right)^{pq}.
    \end{align}
    Since $pq>1$, H\"older's inequality implies
    \begin{align}\label{S-2}
        \int_{\R^N} V^p \mathcal{I}( U^{q-1} u) & =  \int_{\R^N} V^{p+1} \frac{\mathcal{I}( U^{q-1} u)}{V} \\
        & \leq \left( \int_{\R^N} V^{p+1} \right)^\frac{pq-1}{pq} \left( \int_{\R^N} V^{p+1}  \left( \frac{ \mathcal{I}(U^{q-1}u)}{V} \right)^{pq}\right)^\frac{1}{pq}. \nonumber
    \end{align}
    Hence, from \eqref{S-1} and \eqref{S-2}, we obtain
    \begin{align*}
        S \geq \left( \int_{\R^N} V^{p+1} \right)^{1-pq} \left( \int_{\R^N} V^p \mathcal{I} (U^{q-1}u) \right)^{pq} = E_*^{1-pq} \left( \int_{\R^N} U^{q-1} u \mathcal{I}(V^p) \right)^{pq} = E_*^{1-pq} S^{pq}.
    \end{align*}
    Since $S>0$ and $pq>1$, this implies $S \leq E_*$.

    Finally, suppose that $S=E_*$. Then equality must hold in both H\"older's inequalities. In particular, equality in the first one implies that, for almost every $x\in\R^N$, $u$ and $U$ are proportional almost everywhere with respect to $K(x-y)\, \dy$. Since $K$ is strictly positive, there exists a constant $t>0$ such that $u=tU$ in $\R^N$. Therefore, we have
    \begin{align*}
        v= \mathcal{I}(u^q)=t^q V,\ \  u= \mathcal{I}(v^p)=t^{pq}U.
    \end{align*}
    Therefore $t=t^{pq}$, and hence $t=1$. Thus $(u,v)=(U,V)$.
\end{proof}

In the proof of Theorem \ref{Isolate-thm}, we will divide the integral representation on $\R^N$ into $B_R$ and $ B_R^c $ for some $ R>0 $. For this reason, we need the following lemma to deal with the unbounded part.

\begin{lemma}\label{lem-kappa-R}
    Suppose $(U,V)$ is a ground state of \eqref{Main1}. Then
    \begin{align*}
        \kappa_R = \sup_{x \in \R^N} \frac{\mathcal{I}(U^q \chi_{ \{ |y| \geq R \} }) (x)}{V(x)} \to 0,
    \end{align*}
    as $R \to +\infty$.
\end{lemma}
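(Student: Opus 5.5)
The plan is to compare $\mathcal I(U^q\chi_{\{|y|\ge R\}})$ with $V$ directly, splitting into the regions $|x|\le R/2$ and $|x|\ge R/2$. Throughout I use two facts about the ground state. The first is the two-sided bound $c(1+|x|)^{2-N}\le V(x)\le C(1+|x|)^{2-N}$, which follows from Theorem \ref{Decay-GS} together with positivity, continuity and radial monotonicity of $V$. The second is a pointwise decay bound $U^q(y)\le C(1+|y|)^{-\gamma}$ for some $\gamma>N$.

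For the second fact I would argue case by case from Theorem \ref{Decay-GS}. Either $U\sim r^{2-N}$, possibly times $\log r$, and then $\gamma$ can be taken to be any number below $(N-2)q$, which exceeds $N$ because $q\ge \frac{N+2}{N-2}$. Or $U\sim r^{-((N-2)p-2)}$, and then the exponent is $((N-2)p-2)q$. In the latter case a pure power decay $r^{-\gamma'}$ with $\gamma'\le N$ would contradict $U^q\in L^1(\R^N)$, which was shown just before Lemma \ref{radial-uniqueness}, so $\gamma'>N$. Alternatively, I could check $((N-2)p-2)q>N$ directly from $p\beta=\alpha+2$ and $q\alpha=\beta+2$. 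Set $\varepsilon_R:=\int_{\{|y|\ge R\}}U^q\,\dy$; then $\varepsilon_R\to0$ as $R\to\infty$ since $U^q\in L^1$.

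\emph{Case $|x|\le R/2$.} For $|y|\ge R$ we have $|x-y|\ge |y|/2\ge R/2$, so $K(x-y)\le CR^{2-N}$. Hence $\mathcal I(U^q\chi_{\{|y|\ge R\}})(x)\le CR^{2-N}\varepsilon_R$. On the other hand $V(x)\ge V(R/2)\ge cR^{2-N}$, using the monotonicity of $V$ and its lower bound. The ratio is therefore at most $C\varepsilon_R$.

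\emph{Case $|x|\ge R/2$.} I split the $y$-integral into two pieces.
\begin{itemize}
\item On $\{|y-x|\ge |x|/2\}$ we have $K(x-y)\le C|x|^{2-N}$. This piece is at most $C|x|^{2-N}\varepsilon_R$, and its ratio to $V(x)\ge c|x|^{2-N}$ is at most $C\varepsilon_R$.
\item On $\{|y-x|<|x|/2\}$ we have $|y|\ge |x|/2$, hence $U^q(y)\le C|x|^{-\gamma}$. Since $\int_{B_{|x|/2}(x)}K(x-y)\,\dy=C|x|^2$, this piece is at most $C|x|^{2-\gamma}$. Dividing by $V(x)\ge c|x|^{2-N}$ gives at most $C|x|^{N-\gamma}\le CR^{N-\gamma}$.
\end{itemize}

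Combining the two cases gives $\kappa_R\le C(\varepsilon_R+R^{N-\gamma})\to0$, because $\gamma>N$. The only step needing care is the strict inequality $\gamma>N$ for the decay exponent of $U^q$ in the slow-decay regime $p<\frac{N}{N-2}$, including the logarithmic borderline $p=\frac{N}{N-2}$. I expect to settle it via the integrability of $U^q$ as indicated above; the rest is routine kernel splitting.
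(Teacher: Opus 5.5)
Your proof is correct, but it takes a different route from the paper.

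\textbf{The paper's route.} It uses the radial symmetry of $U^q\chi_{\{|y|\ge R\}}$ through Newton's formula
\[
\mathcal I(h)(r)=\frac{1}{N-2}\Bigl(r^{2-N}\int_0^r s^{N-1}h(s)\,\ds+\int_r^\infty s\,h(s)\,\ds\Bigr),
\]
together with $s\le s^{N-1}r^{2-N}$ for $s\ge r$. This gives directly
\[
\mathcal I(U^q\chi_{\{|y|\ge R\}})(x)\le C\,m_R\max\{|x|,R\}^{2-N},\qquad m_R=\int_R^\infty s^{N-1}U^q\,\ds.
\]
Dividing by $V\ge c(1+|x|)^{2-N}$ then yields $\kappa_R\le Cm_R$. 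This needs only $U^q\in L^1(\R^N)$ and the lower bound on $V$. No pointwise decay of $U$ is used, and no near-diagonal region $\{|y-x|<|x|/2\}$ ever appears.

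\textbf{Your route.} Your kernel splitting does not use the radial symmetry of the density at all. Even the monotonicity of $V$ in your first case can be replaced by the lower bound $V\ge c(1+R/2)^{2-N}$. The cost is the near-diagonal piece, which forces you to import the sharp asymptotics of Theorem \ref{Decay-GS} to obtain $U^q\le C(1+|y|)^{-\gamma}$ with $\gamma>N$. Your verification of this is fine:
\begin{itemize}
\item In the slow-decay regime, $(N-2)p-2=\alpha(p+1)$, so the exponent is $q\alpha(p+1)=(\beta+2)(p+1)=N+2(p+1)>N$.
\item Otherwise, $(N-2)q\ge N+2$, which also covers the logarithmic case once $\gamma<(N-2)q$.
\end{itemize}

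\textbf{Comparison.} Your argument would carry over to non-radial densities that have a pointwise decay rate. The paper's argument is shorter and needs strictly less information about $U$. Both give an explicit rate for $\kappa_R$ in terms of the tail $\int_{\{|y|\ge R\}}U^q\,\dy$, up to your extra $R^{N-\gamma}$ term.
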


\begin{proof}
    Write
    \begin{align*}
        m_R :=  \int_R^\infty s^{N-1} U(s)^q\ \ds.
    \end{align*}
    Since $U^q\in L^1(\R^N)$, we have $m_R \to 0$ as $R \to \infty$.

    We next estimate $\mathcal{I} (U^q \chi_{ \{ |y| \geq R \} }) $. Write $ r = |x|$. If $r\geq R$, the radial representation similar to \eqref{Ur} and \eqref{Vr} gives
    \begin{align}\label{tail-1}
        \mathcal{I} ( U^q \chi_{\{|y|>R\}})(r) & = \frac{1}{N-2} \left(r^{-N+2} \int_R^r s^{N-1} U(s)^q \ \ds + \int_r^\infty s U(s)^q \ \ds \right) \nonumber \\
        & \leq \frac{r^{-N+2}}{N-2} \left( \int_R^r s^{N-1} U(s)^q \ \ds + \int_r^\infty s^{N-1} U(s)^q \ \ds \right) \\
        &\leq Cr^{ -N + 2} m_R. \nonumber
    \end{align}
    If $0 \leq r < R$, the same formula gives
    \begin{align}\label{tail-2}
        \mathcal{I} ( U^q \chi_{\{|y|>R\}})(r)  = \frac{1}{N-2} \int_R^\infty s U(s)^q\ \ds \leq C R^{-N+2} m_R.
    \end{align}
    Hence, from \eqref{tail-1} and \eqref{tail-2}, we obtain
    \begin{align}\label{tail-3}
        \mathcal{I} ( U^q \chi_{\{|y|>R\}})(r) \leq Cm_R\max\{r,R\}^{-N +2}.
    \end{align}
    From Theorem \ref{Decay-GS}, we know that $V(r)\geq c(1+r)^{2-N}$ for some $c>0$. Since $ R \geq 1$, we obtain from \eqref{tail-3} that
    \begin{align*}
        \frac{\mathcal{I} ( U^q \chi_{\{|y|>R\}} )(r)}{V(r)} \leq C m_R \left(\frac{1+r}{\max\{r,R\}} \right)^{N-2} \leq C m_R,
    \end{align*}
    for some constant $C>0$, which does not depend on $r$. Consequently, $ \kappa_R \leq Cm_R \to 0$, as $ R \to +\infty$.
\end{proof}

We use this lemma to estimate solutions of the linearized system.

\begin{lemma}\label{linear-estimate}
    Suppose $(U,V)$ is a ground state of \eqref{Main1}, and $ (\phi, \psi) \in C^2(\R^N) \times C^2(\R^N) $ satisfies
    \begin{align}\label{linearized-equ}
        \begin{cases}
            - \Delta \phi = p V^{p-1} \psi, \ \ & \text{in} \ \ \R^N, \\
            - \Delta \psi = q U^{q-1} \phi, \ \ & \text{in} \ \ \R^N.
        \end{cases}
    \end{align}
    Assume also that there is a constant $C_0>0$ such that
    \begin{align*}
        \phi \geq - C_0 U, \ \ \psi \geq -C_0 V, \ \ \text{in} \ \ \R^N, \ \ \int_{\R^N} ( U^q |\phi| + V^p |\psi| ) < +\infty.
    \end{align*}
    Then there is a constant $C>0$ such that
    \begin{align*}
        |\phi| \leq CU, \ \ |\psi| \leq C V.
    \end{align*}
\end{lemma}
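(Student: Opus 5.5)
The plan is to write $\phi,\psi$ through Newtonian potentials of their sources, as in Proposition \ref{lem-int-repre}, and then compare these potentials with $U=\mathcal I(V^p)$ and $V=\mathcal I(U^q)$. The near part of each source is handled by compactness and the far part by a tail estimate of the type in Lemma \ref{lem-kappa-R}.

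\textbf{Step 1 (representation).} Choose $C_1\ge \max\{C_0,pC_0,qC_0\}$ and set $\Phi=\phi+C_1U\ge0$ and $\Psi=\psi+C_1V\ge0$. Then
\[
-\Delta\Phi=pV^{p-1}\psi+C_1V^p\ge (C_1-pC_0)V^p\ge0,
\]
and similarly $-\Delta\Psi\ge0$. The Green-function/monotone-convergence argument in the proof of Proposition \ref{lem-int-repre} then gives $\Phi=\mathcal I(-\Delta\Phi)+c_1$ and $\Psi=\mathcal I(-\Delta\Psi)+c_2$ with constants $c_1,c_2\ge0$. Since $U=\mathcal I(V^p)$ and $V=\mathcal I(U^q)$, this yields
\[
\phi=\mathcal I(pV^{p-1}\psi)+c_1,\qquad \psi=\mathcal I(qU^{q-1}\phi)+c_2,
\]
and all these integrals converge absolutely.

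\textbf{Step 2 (the constants vanish).} This is the step I expect to be the main obstacle.

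I would first try a pairing argument. Multiply the $\phi$-identity by $U^q$ and integrate; this is finite by hypothesis. Apply Fubini, legitimate because the integrands become nonnegative after adding $C_1U$, $C_1V$. Using $\mathcal I(U^q)=V$ this gives
\[
\int U^q\phi=\int pV^{p}\psi+c_1\int U^q .
\]
Symmetrically,
\[
\int V^p\psi=\int qU^{q}\phi+c_2\int V^p ,
\]
where the last term is read as $+\infty$ if $V^p\notin L^1$; in that case $c_2=0$ is forced at once. Substituting one identity into the other gives
\[
(1-pq)\int U^q\phi = c_1\int U^q+p\,c_2\int V^p .
\]
Since $pq>1$, this alone does not decide the signs. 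I would therefore supplement it with the asymptotics. If, say, $c_1>0$, then $\phi\ge c_1/2$ outside a large ball, because $\mathcal I(pV^{p-1}\psi)\ge -C\,\mathcal I(V^p)=-CU\to0$. Feeding this into $\psi=\mathcal I(qU^{q-1}\phi)+c_2$ produces a lower bound $\psi\gtrsim \mathcal I(U^{q-1}\chi_{\{|y|>R\}})$. Inserting this back into the $\phi$-equation and using the decay rates of Theorem \ref{Decay-GS} should contradict either $\int V^p|\psi|<\infty$ or the identity above.

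\textbf{Step 3 (bound from the representation).} With $c_1=c_2=0$, the lower bounds $\phi\ge-C_0U$, $\psi\ge-C_0V$ are given, so only upper bounds are needed. From Step 1,
\[
\phi\le \mathcal I(pV^{p-1}\psi^+),\qquad \psi^+\le \mathcal I(qU^{q-1}\phi^+)+\mathcal I(qU^{q-1}\phi^-)\le \mathcal I(qU^{q-1}\phi^+)+qC_0V .
\]
Fix $R$ and split each source into $B_R$ and $B_R^c$.
\begin{itemize}
\item The near part has a bounded, compactly supported density, since $\phi,\psi$ are continuous. Its potential is therefore $\le C_R(1+|x|)^{2-N}$. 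By Theorem \ref{Decay-GS} this is $\le C_R'U$ and $\le C_R'V$, because $U$ decays no faster than $|x|^{2-N}$.
\item For the far part, put $M_\rho=\sup_{B_\rho}\bigl(\phi^+/U+\psi^+/V\bigr)$, which is finite for each $\rho$. It contributes at most $M_\rho\bigl(p\,\mathcal I(V^p\chi_{\{|y|>R\}})+q\,\mathcal I(U^q\chi_{\{|y|>R\}})\bigr)$, plus a piece from $B_\rho^c$. By Lemma \ref{lem-kappa-R} and its analogue $\sup_x \mathcal I(V^p\chi_{\{|y|\ge R\}})/U\to0$, this is $\le\kappa'_R M_\rho\,(U+V)$.
\end{itemize}

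\textbf{Step 4 (absorption).} The piece from $B_\rho^c$ is controlled by the weighted integrability $\int(U^q|\phi|+V^p|\psi|)<\infty$, splitting $|x-y|$ large versus comparable. Choosing $R$ with $\kappa'_R<1/2$ and absorbing gives $M_\rho\le C$ uniformly in $\rho$. Letting $\rho\to\infty$ yields $|\phi|\le CU$ and $|\psi|\le CV$.
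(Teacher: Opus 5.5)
Step 1 is fine, but Step 2, which you rightly flag, has a genuine gap. Your pairing with $(U^q,V^p)$ is correct, yet it leaves the factor $1-pq<0$ and fixes no signs, and the asymptotic supplement cannot close it.

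\begin{itemize}
\item If $c_1>0$, the lower bound $\psi\gtrsim\mathcal I(U^{q-1}\chi_{\{|y|>R\}})-CV$ is compatible with $\int V^p|\psi|<\infty$, since by Fubini $\int V^p\,\mathcal I(U^{q-1}\chi_{\{|y|>R\}})=\int_{\{|y|>R\}}U^q<\infty$.
\item Feeding it back into the $\phi$-equation produces a term with $\int U^q\,\mathcal I\bigl(V^{p-1}\mathcal I(U^{q-1}\chi_{\{|y|>R\}})\bigr)=\int_{\{|y|>R\}}U^q$, which is again finite.
\item Your identity only says $\int U^q\phi<0$. This is compatible with $\phi\ge c_1/2$ far out, because $U^q\in L^1$.
\end{itemize}

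So the decay information does not exclude $c_1>0$, $c_2\ge0$; the obstruction is global. The missing idea is to pair with the dilation element of the kernel, $Z_U=\alpha U+x\cdot\nabla U$, $Z_V=\beta V+x\cdot\nabla V$. Differentiating $U_\lambda=\mathcal I(V_\lambda^p)$, $V_\lambda=\mathcal I(U_\lambda^q)$ at $\lambda=1$ gives $Z_U=\mathcal I(Q_V)$, $Z_V=\mathcal I(Q_U)$, where $Q_U=qU^{q-1}Z_U$ and $Q_V=pV^{p-1}Z_V$. The bounds $|Z_U|\le CU$, $|Z_V|\le CV$ follow from $-rU'\le(N-2)U$, $-rV'\le(N-2)V$.

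Now pair the $\phi$-identity with $Q_U$ and the $\psi$-identity with $Q_V$. The $\phi,\psi$ terms cancel exactly, leaving $c_1\int Q_U+c_2\int Q_V=0$. Since $\int Q_U=-\alpha\int U^q<0$ and $\int Q_V=-\beta\int V^p<0$, this forces $c_1=c_2=0$. If $V^p\notin L^1$, first use your observation $c_2=0$; the single pairing then gives $c_1\int Q_U=0$. Your computation is the same one with $(U,V)$ in place of $(Z_U,Z_V)$. But $(U,V)$ solves the linearized integral system only up to the factors $p,q$, which is exactly where $1-pq$ comes from.

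Steps 3--4 also need repair.

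\emph{The tail analogue is false.} The claim $\sup_x\mathcal I(V^p\chi_{\{|y|\ge R\}})/U\to0$ fails for $p\le N/(N-2)$. For $|x|=t\ge R$ the radial formula gives $U(t)-\mathcal I(V^p\chi_{\{|y|\ge R\}})(t)=\frac{t^{2-N}}{N-2}\int_0^R s^{N-1}V^p\,\ds$. By Theorem \ref{Decay-GS} this is $o(U(t))$, so the ratio tends to $1$ at infinity for every $R$. This part is repairable. Only the product of the two tail factors must be small, and $\mathcal I(V^p\chi_{\{|y|\ge R\}})\le U$ holds trivially, provided you keep separate suprema for $\phi^+/U$ and $\psi^+/V$ rather than their sum. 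The paper does exactly this: it passes to $r=\phi/U$ and $\mathcal Pr=U^{-1}\mathcal I(V^{p-1}\mathcal I(U^q r))$, and cuts off only at the inner $U^q$ level, where Lemma \ref{lem-kappa-R} gives $\mathcal P\chi_{B_R^c}\le\kappa_R$.

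\emph{The absorption in Step 4 is unjustified, and this is the more serious problem.} For $x\in B_\rho$ near $\partial B_\rho$, the contribution of $\psi^+\chi_{B_\rho^c}$ sees the singularity of $K$ at $y=x$. It must be bounded by $C\,U(x)$, with $U(x)$ as small as $U(\rho)$. The hypothesis $\int V^p|\psi|<\infty$ only makes weighted tail integrals $o(1)$; it gives no pointwise control of $\psi^+$ there, which is exactly what you are trying to prove. Enlarging to $M_{2\rho}$ only yields $M_\rho\le C+\varepsilon M_{2\rho}$, and closing that would need an a priori growth bound you do not have.

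The paper never assumes finiteness a priori. It builds a bounded solution $s_R=\sum_{k\ge0}(pqT_R)^kF_R$ in $L^\infty$ of the tail equation $r\chi_{B_R^c}=pq\,T_R(r\chi_{B_R^c})+F_R$, where $T_Rr=\chi_{B_R^c}\mathcal P(\chi_{B_R^c}r)$. It then identifies $r\chi_{B_R^c}=s_R$ by uniqueness in $L^1(\R^N;U^{q+1}\dx)$. There $T_R$ is also a contraction, because $\mathcal P$ is symmetric with respect to $U^{q+1}\dx$. The hypothesis $\int U^q|\phi|<\infty$ is precisely the statement $r\in L^1(\R^N;U^{q+1}\dx)$.
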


\begin{proof}
    We first write the linearized system in integral form. Choose $ M>0 $ sufficiently large so that $M \geq C_0\max\{1,p,q\}$. Then
    \begin{align*}
        \phi + M U \geq 0, \ \  -\Delta( \phi + MU) = p V^{p-1} \psi + M V^p \geq 0,
    \end{align*}
    and similarly,
    \begin{align*}
        \psi + M V \geq 0,\ \ -\Delta( \psi + MV ) = q U^{q-1} \phi + M U^q \geq 0.
    \end{align*}
    As in the proof of Proposition \ref{lem-int-repre}, there are constants $c,h\geq0$ such that
    \begin{align*}
        \phi + M U=p \mathcal{I} (V^{p-1} \psi ) + M \mathcal{I} ( V^p)+c,\ \  \psi + M V = q \mathcal{I} (U^{q-1} \phi ) + M \mathcal{I} ( U^q ) + h.
    \end{align*}
    Since $ U = \mathcal{I}(V^p), V = \mathcal{I}(U^q)$, we have
    \begin{align}\label{integral-ch}
        \phi =p \mathcal{I} (V^{p-1} \psi ) + c,\ \  \psi  = q \mathcal{I} (U^{q-1} \phi )  + h.
    \end{align}
    
    We now prove that $c=h=0$. Let $(U_{\lambda}(x), V_{\lambda}(x)) = (\lambda^{\alpha}U(\lambda x), \lambda^{\beta} V(\lambda x))$, then it follows from Proposition \ref{lem-int-repre} that $ U_{\lambda} = \mathcal{I} (V_{\lambda}^p)$ and $ V_{\lambda} = \mathcal{I} (U_{\lambda}^q)$.     Differentiating the integral system at $\lambda=1$ gives
    \begin{align*}
       Z_U=\mathcal I(Q_V), \ \  Z_V=\mathcal I(Q_U),
    \end{align*}
    where
    \begin{align*}
        Z_U = \alpha U+x\cdot\nabla U,\ \ Z_V=\beta V + x \cdot \nabla V, \ \ Q_U = q U^{q-1} Z_U,\ \ Q_V = p V^{p-1} Z_V.
    \end{align*}
    Moreover, \eqref{U'V'-UV} implies $|Z_U| \leq CU$, $|Z_V|\leq CV$, and hence
    \begin{align*}
        |Q_U| \leq C U^q, \ \ |Q_V| \leq C V^p.
    \end{align*}
    The following changes in the order of integration are justified by
    \begin{align*}
        \int_{\R^N}|Q_U|\mathcal I(V^{p-1}|\psi|)
        \leq C\int_{\R^N}U^q\mathcal I(V^{p-1}|\psi|)
        =C\int_{\R^N}V^p|\psi|<\infty,
    \end{align*}
    and the corresponding estimate with the components interchanged.

    Next, multiplying the first equation in \eqref{integral-ch} by $Q_U$ and integrating, we obtain
    \begin{align*}
        \int_{\R^N} Q_U \phi & = p \int_{\R^N} Q_U \mathcal{I} (V^{p-1} \psi) + c \int_{\R^N} Q_U\\
        & = p \int_{\R^N} V^{p-1} \psi \mathcal{I} (Q_U) + c \int_{\R^N} Q_U\\
        & = \int_{\R^N} Q_V \psi + c\int_{\R^N} Q_U.
    \end{align*}
    Here $Q_U$ is integrable, because $ U^q \in L^1(\R^N)$.

    Suppose first that $V^p\in L^1(\R^N)$. Then $Q_V$ is also integrable, and the equation for $\psi$ similarly gives
    \begin{align*}
        \int_{\R^N} Q_V \psi = \int_{\R^N} Q_U \phi+ h \int_{\R^N} Q_V.
    \end{align*}
    Combining the two identities gives
    \begin{align*}
        c\int_{\R^N}Q_U+h\int_{\R^N}Q_V=0.
    \end{align*}
    Since $U^q$ and $V^p$ are positive radial decreasing integrable functions, integration by parts gives
    \begin{align*}
        \int_{\R^N} Q_U & = (q\alpha - N)\int_{\R^N} U^q = - \alpha \int_{\R^N} U^q<0,\\
        \int_{\R^N} Q_V & = (p\beta - N) \int_{\R^N} V^p =- \beta \int_{\R^N} V^p<0.
    \end{align*}
    The boundary terms vanish by radial monotonicity and integrability. Since $c,h\geq0$, we conclude that $c=h=0$.

    If instead $V^p\notin L^1(\R^N)$, we first prove that $ h=0 $. Indeed, the lower bound $\phi \geq- C_0 U$ implies
    \begin{align*}
        \psi = q \mathcal{I} (U^{q-1}\phi) + h \geq -qC_0 \mathcal{I} (U^q) + h = h - q C_0 V.
    \end{align*}
    Since $V(x)\to0$ as $|x|\to\infty$, the assumption $h>0$ would imply $\psi\geq h/2$ near infinity, which contradicts $\int_{\R^N}V^p|\psi|<+\infty$. Hence $h=0$. Now, multiplying the second equation of \eqref{integral-ch} by $Q_V$, we obtain
    \begin{align*}
        c \int_{\R^N} Q_U=0,
    \end{align*}
    and therefore $c=0$ as well. Thus
    \begin{align}\label{integral-no-ch}
        \phi=p\mathcal I(V^{p-1}\psi),\ \ \psi=q\mathcal I(U^{q-1}\phi).
    \end{align}

    Next, we prove $|\phi| \leq CU$. Let $r=\phi/U$. Substituting the second integral equation in \eqref{integral-no-ch} into the first gives
    \begin{align*}
        r=pq\,\mathcal Pr,\ \  \mathcal Pr=\frac{1}{U} \mathcal{I} \left(V^{p-1} \mathcal{I}(U^q r)\right).
    \end{align*}
    Since the operator $\mathcal{I}$ is positive and $U,V>0$, the operator $\mathcal P$ is also positive and satisfies
    \begin{align*}
        \mathcal P1=\frac{1}{U} \mathcal{I} \left(V^{p-1} \mathcal{I}(U^q)\right) =\frac{ \mathcal{I}(V^p)}{U}=1.
    \end{align*}
    Moreover, for nonnegative functions $a,b$, it holds that
    \begin{align*}
        \int_{\R^N} U^{q+1}a \,\mathcal P b =\int_{\R^N} V^{p-1} \mathcal{I}(U^qa) \mathcal{I}(U^qb) = \int_{\R^N} U^{q+1} b \,\mathcal Pa.
    \end{align*}
    Consequently, using $\mathcal P1=1$ and the positivity of $\mathcal{P}$, we obtain
    \begin{align*}
        \| \mathcal Pr \|_{L^\infty} \leq\|r \|_{L^\infty}, \ \ \int_{\R^N}U^{q+1}|\mathcal Pr| \leq \int_{\R^N}U^{q+1}|r|.
    \end{align*}
    In particular, $\mathcal P$ is bounded on both $L^\infty(\R^N)$ and $L^1(\R^N, U^{q+1}  \ \dx)$, with operator norm at most $1$.

    Next, by Lemma \ref{lem-kappa-R}, we have
    \begin{align*}
        \mathcal I(U^q \chi_{B_R^c}) \leq \kappa_R V, \  \ \kappa_R \to 0 \ \ \text{as} \ \ R \to \infty.
    \end{align*}
    It follows that
    \begin{align*}
        \mathcal P\chi_{B_R^c} =\frac{1}{U} \mathcal I \left( V^{p-1} \mathcal I( U^q \chi_{B_R^c})\right) \leq \frac{\kappa_R}{U} \mathcal I(V^p) = \kappa_R.
    \end{align*}
    Define
    \begin{align*}
        T_R r = \chi_{B_R^c}\mathcal P(\chi_{B_R^c}r).
    \end{align*}
    Then, the above estimate implies
    \begin{align}\label{T-R-norm}
        \|T_R r\|_{L^\infty} = ||\chi_{B_R^c}\mathcal P(\chi_{B_R^c}r)||_{L^{\infty}} \leq ||r||_{L^{\infty}} ||\mathcal P \chi_{B_R^c}||_{L^\infty} \leq \kappa_R ||r||_{L^{\infty}}.
    \end{align}
    Moreover, we have
    \begin{align}\label{T-R-norm2}
        \int_{\R^N}U^{q+1} |T_R r|  \leq \int_{B_R^c} U^{q+1} \mathcal P(\chi_{B_R^c} |r|) = \int_{B_R^c} U^{q+1} |r|\,\mathcal P\chi_{B_R^c} \leq \kappa_R \int_{B_R^c} U^{q+1}|r|.
    \end{align}
    Thus $T_R$ has operator norm at most $\kappa_R$ on $L^{\infty}(\R^N)$ and $L^{1}(\R^N; U^{q+1} \ \dx)$.

    Choose $ R $ to be a large constant. Since $r=\phi/U$ is continuous and $U>0$, we have $|r|\leq M_R$ for some constant $M_R > 0$ on $B_R$. Next, we need to prove that $r \chi_{B_R^c}$ is also bounded. Restricting the equation $r=pq\mathcal Pr$ to $B_R^c$, we obtain
    \begin{align}\label{r-chi-DR}
        r\chi_{B_R^c} = p q T_R(r \chi_{B_R^c}) + F_R, \ \  F_R = pq \chi_{B_R^c} \mathcal P(r \chi_{B_R}).
    \end{align}
    From the positivity of $\mathcal P$, and $r \chi_{B_R} \in L^\infty $, we have
    \begin{align}
        | F_R | \leq pq M_R \mathcal{P}1 = pq M_R,
    \end{align}
    and thus $ F_R \in L^1(\R^N,U^{q+1} \dx)$ since $\int_{\R^N}U^{q+1}<+\infty$.

    Choose $ R $ large enough, such that $ p q \kappa_R < 1 $, and define
    \begin{align*}
        s_R = \sum_{k=0}^\infty(pq T_R)^k F_R.
    \end{align*}
    Then, \eqref{T-R-norm} implies that $s_R$ is well-defined in $L^\infty$ and is a bounded solution of equation \eqref{r-chi-DR}. Indeed, \eqref{T-R-norm} implies that
    \begin{align*}
        ||s_R||_{L^\infty} \leq \sum_{k=0}^{\infty} (pq\kappa_R)^k ||F_R||_{L^{\infty}} \leq \frac{pq M_R}{1-pq \kappa_R}.
    \end{align*}
    Next, we prove that $s_R = r \chi_{B_R^c}$ almost everywhere in $\R^N$. Both $s_R$ and $r\chi_{B_R^c}$ belong to $L^1(\R^N;U^{q+1}\dx)$, the latter by $\int U^q|\phi|<\infty$. Since they satisfy \eqref{r-chi-DR}, we have
    \begin{align*}
        (r \chi_{B_R^c} - s_R) = pq T_R( r \chi_{B_R^c} - s_R ).
    \end{align*}
    Hence, \eqref{T-R-norm2} gives
    \begin{align*}
        || r  \chi_{B_R^c} - s_R ||_{L^1( \R^N; U^{q+1} \dx)} \leq pq \kappa_R || r  \chi_{B_R^c} - s_R ||_{L^1( \R^N; U^{q+1} \dx)},
    \end{align*}
    which together with $pq \kappa_R <1$ implies that $ r\chi_{B_R^c} = s_R$ almost everywhere in $\R^N$. Hence, we have $||r||_{L^{\infty}} < \infty$. Since $r$ is continuous, we have $|\phi|\leq CU$. Finally, the integral equation for $\psi$ gives
    \begin{align*}
        |\psi| \leq q \mathcal{I}(U^{q-1}|\phi|) \leq C \mathcal{I} (U^q) = CV.
    \end{align*}
    The proof is complete.
\end{proof}

We now prove Theorem~\ref{Isolate-thm}.

\begin{proof}[Proof of Theorem \ref{Isolate-thm}]
    We argue by contradiction. If the conclusion were false, then there exists a sequence of positive solutions $(u_j,v_j)$ of \eqref{Main1}, different from $(U,V)$, such that
    \begin{align*}
        u_j(0)=1, \ \  \nabla u_j(0)=0, \ \ (u_j,v_j) \to (U,V) \ \ \text{in} \  \ C^2_{\mathrm{loc}}(\R^N).
    \end{align*}

    We first use the local convergence to get a uniform lower bound on the whole space. For each fixed $R>0$, the local convergence implies
    \begin{align*}
        u_j \geq (1-o_j(1))U, \ \ \text{in} \ \ B_R.
    \end{align*}
    Using $v_j=\mathcal I(u_j^q)$, we get
    \begin{align*}
        v_j \geq \mathcal{I} ( u_j^q \chi_{B_R} ) \geq (1-o_j(1))^q \mathcal{I} (U^q \chi_{B_R})  \geq (1-o_j(1))^q(1-\kappa_R)V, \text{ in } \R^N, 
    \end{align*}
   where $ \kappa_R \to 0 $ as $ R \to \infty $. Using $u_j=\mathcal I(v_j^p)$, we get a global lower bound for $u_j$. Letting first $ j \to \infty $ and then $ R \to \infty $, we conclude that
    \begin{align*}
        \varepsilon_j := \max \left\{ \left \| \frac{(U-u_j)_+}{U} \right\|_{L^\infty(\R^N)}, \left \| \frac{ ( V-v_j )_+}{V} \right \|_{ L^\infty (\R^N) } \right \} \to 0.
    \end{align*}
    Moreover, $ \varepsilon_j > 0 $. Indeed, if $ \varepsilon_j = 0$, then $ u_j \geq U $ and $ v_j \geq V $, and Lemma~\ref{mixed-integral} implies $ (u_j,v_j) = (U,V) $, which  contradicts to our assumption. Next, define
    \begin{align*}
        \phi_j = \frac{u_j-U}{\varepsilon_j}, \  \  \psi_j = \frac{v_j-V}{\varepsilon_j}.
    \end{align*}
    By the definition of $\varepsilon_j$, we obtain
    \begin{align*}
        \phi_j \geq - U, \ \ \psi_j \geq - V \ \ \text{in } \R^N.
    \end{align*}
    Moreover, we have
    \begin{align}\label{phi-j-psi-j}
        -\Delta \phi_j = a_j(x)\psi_j, \ \  -\Delta \psi_j = b_j(x) \phi_j,
    \end{align}
    where
    \begin{align*}
        a_j(x) & = p \int_0^1 \bigl( V(x) + t(v_j(x)-V(x)) \bigr)^{p-1}\ \dt,\\
        b_j(x) & = q \int_0^1 \bigl( U(x) + t(u_j(x)-U(x)) \bigr)^{q-1} \ \dt.
    \end{align*}
    Since $(u_j,v_j)\to(U,V)$ in $C^2_{\mathrm{loc}}(\mathbb{R}^N)$ and $U,V>0$, we have
    \begin{align*}
        a_j \to p V^{p-1},\ \  b_j\to q U^{q-1}
    \end{align*}
    locally uniformly. Moreover, on every fixed ball, these coefficients are uniformly positive and bounded, with uniform local H\"{o}lder bounds.

    We first establish uniform weighted integral estimates for $(\phi_j,\psi_j)$. From Lemma~\ref{mixed-integral}, we have
    \begin{align*}
        \int_{\R^N} U^q (u_j-U) = \int_{\R^N} U^q (u_j-U)_+ - \int_{\R^N} U^q (u_j-U)_- \leq 0.
    \end{align*}
    Since $(U-u_j)_+\leq\varepsilon_jU$, it follows that $(u_j - U)_- \leq \varepsilon_j U$, and hence
    \begin{align}\label{Uq-phi-j}
        \int_{\R^N} U^q |\phi_j| = \frac{1}{\varepsilon_j} \int_{\R^N} U^q \left( (u_j - U)_+ + (u_j - U)_- \right)  \leq \frac{2}{\varepsilon_j} \int_{\R^N} U^q(u_j - U)_-  \leq2E_*.
    \end{align}
    Similarly, from Lemma \ref{mixed-integral}, we get
    \begin{align*}
        \int_{\R^N} V ( v_j^p - V^p ) = \int_{\R^N} V (v_j^p-V^p)_+ - \int_{\R^N} V (v_j^p-V^p)_- \leq 0.
    \end{align*}
    For sufficiently large $ j $, we have $ \varepsilon_j \leq 1/2 $, and $v_j \geq (1-\varepsilon_j) V$ implies
    \begin{align*}
        ( v_j^p - V^p )_- = ( V^p - v_j^p )_+ \leq \bigl( 1-(1-\varepsilon_j)^p \bigr) V^p \leq C_p \varepsilon_j V^p.
    \end{align*}
    Consequently,
    \begin{align}\label{V-vj-p}
        \int_{\R^N} V\left| \frac{v_j^p-V^p}{\varepsilon_j} \right| \leq \frac{2}{\varepsilon_j} \int_{\R^N} V (v_j^p - V^p)_- \leq 2 C_p E_*.
    \end{align}
   From \eqref{Uq-phi-j}, \eqref{V-vj-p}, and the local positivity of $U,V$ and the uniform lower bound for $a_j$, we obtain that $\phi_j$ and $\psi_j$ are uniformly bounded in $L^1_{\mathrm{loc}}(\mathbb{R}^N)$. Using integral representation of \eqref{phi-j-psi-j} and applying Young inequality, we obtain uniform local $L_{\loc}^{s}$ bounds of $(\phi_j, \psi_j)$ for $ 1 < s < \frac{N}{N-2}$. The elliptic regularity and Sobolev embedding then yield uniform local $L^\infty$ bounds. Finally, the uniform H$\ddot{o}$lder bounds for the coefficients allow us to apply Schauder estimates. Passing to a subsequence, we obtain $ (\phi_j,\psi_j) \to (\phi,\psi) $ in $ C^2_{\mathrm{loc}} (\mathbb{R}^N) $, and thus
    \begin{align*}
        -\Delta \phi = p V^{p-1} \psi, \ \  -\Delta\psi = qU^{q-1} \phi.
    \end{align*}
    From the bounds for $\phi_j,\psi_j$, we get $\phi\geq-U$ and $\psi\geq-V$. Moreover, by Fatou's lemma, we have
    \begin{align*}
        \int_{\mathbb{R}^N}U^q|\phi|\ \dx<\infty, \ \  p\int_{\R^N} V^p |\psi| \leq \liminf_{j\to\infty} \int_{\R^N} V \left| \frac{v_j^p-V^p}{\varepsilon_j} \right| < + \infty.
    \end{align*}
    Thus $ (\phi, \psi) $ satisfies the assumptions of Lemma \ref{linear-estimate}.

    Before applying Lemma \ref{linear-estimate}, we show that $(\phi, \psi) \neq (0,0) $. Let
    \begin{align*}
        h_j=(U-u_j)_+,\ \  z_j=(V-v_j)_+.
    \end{align*}
    Since $q>1$ and $K$ is positive, we obtain
    \begin{align*}
        V(x) - v_j(x) & = \int_{\R^N} K(x-y) (U^q(y) - u_j^q(y)) \ \dy \leq \int_{\R^N} K(x-y) (U^q(y)-u_j^q(y))_+\ \dy  \\
        & \leq q \int_{\R^N} K(x-y) U^{q-1} (U- u_j)_+ \ \dy = q \mathcal{I}(U^{q-1} h_j) .
    \end{align*}
    Similarly, for $U-u_j$, we use the mean value theorem and $v_j\geq V/2$ for large $j$ when $p<1$ to get
    \begin{align*}
        z_j \leq q \mathcal{I} (U^{q-1}h_j),\ \ h_j\leq C_p \mathcal{I}(V^{p-1}z_j).
    \end{align*}

    Suppose that $(\phi,\psi)=(0,0)$. Then $ \phi_j \to 0$ and hence the local positivity of $U$ implies $ h_j/(\varepsilon_j U) \to 0$ as $ j \to \infty$ uniformly on every fixed ball, while in the whole space $\R^N$, we have $ 0 \leq h_j /\varepsilon_jU \leq 1 $. For fixed $R>0$, splitting $\R^N$ into $B_R$ and $ B_R^c $ gives
    \begin{align*}
        \frac{z_j(x)}{\varepsilon_j V(x)}  \leq q \frac{ \mathcal{I} (U^{q-1}h_j \chi_{B_R})(x)}{\varepsilon_j V(x)} + q\frac{ \mathcal{I} ( U^{q-1}h_j \chi_{ \R^N\setminus B_R} )(x)}{\varepsilon_jV(x)}  \leq q\left\| \frac{h_j}{\varepsilon_jU} \right\|_{L^\infty(B_R)} + q\kappa_R.
    \end{align*}
    Taking the supremum in $x$, then letting $j\to\infty$ and $R\to\infty$, we get
    \begin{align}\label{zj-V->0}
        \left\| \frac{z_j}{\varepsilon_j V} \right \|_{L^\infty(\R^N)} \to 0, \ \ \text{as} \ \ j \to +\infty.
    \end{align}
    Similarly, we have
    \begin{align*}
        \frac{h_j}{\varepsilon_j U}  \leq \frac{ C_p \mathcal{I}(V^{p-1} z_j) }{ \varepsilon_j U} \leq C_p \left\| \frac{z_j}{\varepsilon_j V} \right\|_{L^\infty (\R^N)}  \frac{\mathcal I(V^p)}{U}  = C_p\left\| \frac{z_j}{\varepsilon_j V} \right\|_{L^\infty(\R^N)}.
    \end{align*}
    Hence
    \begin{align}\label{hj-U->0}
        \left\| \frac{h_j}{\varepsilon_j U} \right \|_{L^\infty(\R^N)} \to 0.
    \end{align}
    However, the definition of $\varepsilon_j$ implies that
    \begin{align}\label{epsilon-j}
        \max\left\{ \left\|\frac{h_j}{\varepsilon_j U}\right\|_{L^\infty(\R^N)},\left\|\frac{z_j}{\varepsilon_j V}\right\|_{L^\infty(\R^N)}\right\}=1,
    \end{align}
    which contradicts \eqref{zj-V->0} and \eqref{hj-U->0}. Hence, it follows that $(\phi,\psi) \neq(0,0)$.

    Now, we apply Lemma \ref{linear-estimate} to $ (\phi, \psi) $ and get
    \begin{align*}
        |\phi| \leq C U,\ \ |\psi| \leq C V.
    \end{align*}
    In particular, it follows that $(\phi, \psi) \to 0$ as $|x| \to \infty$. Theorem \ref{non-degeneracy} then implies that
    \begin{align*}
        (\phi,\psi) = c_0 (Z_U,Z_V)+\sum_{i=1}^N c_i( \partial_iU, \partial_iV)
    \end{align*}
    for some constants $c_0,c_1,\ldots,c_N$. Since $ u_j(0) =1, \nabla u_j (0) = 0 $, we get
    \begin{align*}
        \phi(0)=0, \ \ \nabla\phi(0)=0.
    \end{align*}
    Since $U$ is radial and $U(0)=1$, we have
    \begin{align*}
        Z_U(0)=\alpha>0,\ \ \nabla U(0)=0.
    \end{align*}
    Thus $ \phi(0)=0 $ implies $ c_0=0 $. Moreover, since $U$ is radial and satisfies $ - \Delta U = V^p$, we get
    \begin{align*}
        D^2U (0)= -\frac{V(0)^p}{N} \operatorname{Id},
    \end{align*}
    which is invertible. Hence
    \begin{align*}
        0=\nabla\phi(0)=D^2 U(0) (c_1,\ldots,c_N)^T
    \end{align*}
    implies $c_1=\cdots=c_N=0$. Consequently $(\phi,\psi)=(0,0)$, a contradiction. This completes the proof.
\end{proof}

\section{A nonnegative defect and radial symmetry}\label{sec:defect}

In this section, we define a nonnegative quantity and show that its vanishing implies that $u$ and $v$ are radially symmetric about a common point.

Throughout this section, we write
\begin{align*}
    f=u^q, \ \  g=v^p,\ \ H = fu+gv = u^{q+1}+v^{p+1},
\end{align*}
and recall that
\begin{align*}
    \alpha=\frac{N}{q+1},\ \  d = N-2, \ \  \beta=\frac{N}{p+1}, \ \ \alpha+\beta = d .
\end{align*}
From Proposition \ref{lem-int-repre}, we have $u= \mathcal{I} g $ and $ v= \mathcal{I} f $, and
\begin{align*}
    \int_{B_r(a)} f \leq C r^\alpha, \ \  \int_{B_r(a)} g \leq Cr^\beta,
\end{align*}
which hold for every $ a \in \R^N $ and $ r > 0 $.

Fix $e \in\mathbb S^{N-1}$ and write
\begin{align*}
    x=z+se, \ \ y=w+s'e,\ \  z,w\in e^\perp.
\end{align*}
For $z\neq w$, set $\rho=|z-w|>0$ and define
\begin{align*}
    k(s,s')=K(z-w+(s-s')e)=c_N\bigl(\rho^2+(s-s')^2\bigr)^{-d/2}.
\end{align*}
For two bounded nonempty intervals $I=(s_-,s_+)$ and $J=(s'_-,s'_+)$ on these parallel lines, denote their midpoints by $m_I$ and $m_J$, respectively, and define
\begin{align*}
    T_e(I,J)=\int_{I\times J}\partial_s k(s,s')\ \ds\ds', \ \  \Delta_e(I,J)=-(m_I-m_J)T_e(I,J).
\end{align*}
Set
\begin{align*}
    k_I(s') = \frac{k(s_-,s') + k(s_+,s')}{2}, \ \  k_J(s) = \frac{ k(s,s'_-) + k(s,s'_+)}{2}.
\end{align*}

We have the following lemma.

\begin{lemma}\label{lem:interval-force}
    Let $I$ and $J$ be bounded nonempty intervals on distinct parallel lines. Then
    \begin{align}\label{interval-force-1}
        \Delta_e(I,J) \geq 0,
    \end{align}
    and $ \Delta_e(I,J)=0 $ if and only if $m_I = m_J$. Moreover,
    \begin{align}\label{interval-force-2}
        \int_{I\times J} (2k-k_I-k_J) \ \ds\ds' + \Delta_e(I,J) = d \int_{I\times J} \frac{(s-s')^2}{\rho^2 + (s-s')^2} k(s,s') \ \ds \ds'.
    \end{align}
    In particular, for any nonnegative constants $\gamma_I$ and $\gamma_J$,
    \begin{align}\label{interval-force-3}
        &\int_{I\times J}(\gamma_I + \gamma_J)k\ \ds  \ds' + \min\{ \gamma_I, \gamma_J \} \Delta_e(I,J)  \\
        &\ \  \leq \int_{I\times J}\left[ \gamma_I k_I + \gamma_J k_J + \left( d \min\{ \gamma_I, \gamma_J \} \frac{(s-s')^2}{\rho^2+ (s-s')^2} + |\gamma_I - \gamma_J| \right) k \right] \ \ds \ds'. \nonumber
    \end{align}
\end{lemma}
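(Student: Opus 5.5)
The whole statement is one-dimensional calculus in the variable $t=s-s'$. Write $k(s,s')=\varphi(s-s')$ with $\varphi(t)=c_N(\rho^2+t^2)^{-d/2}$. Since $\rho>0$, $\varphi$ is smooth, even, and strictly decreasing in $|t|$. Moreover $\partial_s k=\varphi'(s-s')$, $\partial_{s'}k=-\partial_s k$, and
\begin{align*}
    -(s-s')\,\partial_s k=d\,\frac{(s-s')^2}{\rho^2+(s-s')^2}\,k .
\end{align*}
I will prove \eqref{interval-force-1} by a monotonicity argument, \eqref{interval-force-2} by integration by parts, and then deduce \eqref{interval-force-3} from \eqref{interval-force-2}.

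\emph{Proof of \eqref{interval-force-1}.} Integrate in $s$ first to get
\begin{align*}
    T_e(I,J)=\Phi(s_+)-\Phi(s_-), \qquad \Phi(t):=\int_J\varphi(t-s')\,\ds'.
\end{align*}
Since $\varphi$ is even and strictly decreasing in $|t|$, $\Phi$ is symmetric about $m_J$ and strictly decreasing in $|t-m_J|$. To check the strict monotonicity, note that for $t>m_J$ one has $\Phi'(t)=\varphi(t-s'_+)-\varphi(t-s'_-)<0$, because $|t-s'_+|<|t-s'_-|$.

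Write $a=m_I-m_J$ and $h=|I|/2>0$, so that $s_\pm-m_J=a\pm h$. If $a>0$, then $|a+h|>|a-h|$, hence $\Phi(s_+)<\Phi(s_-)$. Thus $T_e<0$ and $\Delta_e=-aT_e>0$. The case $a<0$ is symmetric and again gives $\Delta_e>0$. If $a=0$, then $\Delta_e=0$ trivially. This gives both the sign and the equality case.

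\emph{Proof of \eqref{interval-force-2}.} For each fixed $s'$, integration by parts on $I$ gives
\begin{align*}
    \int_I (s-m_I)\,\partial_s k\,\ds=\tfrac{|I|}{2}\bigl(k(s_+,s')+k(s_-,s')\bigr)-\int_I k\,\ds=|I|k_I(s')-\int_I k\,\ds .
\end{align*}
Analogously, $\int_J (s'-m_J)\,\partial_{s'}k\,\ds'=|J|k_J(s)-\int_J k\,\ds'$. Integrate each identity in the remaining variable, noting that $\int_{I\times J}k_I=|I|\int_J k_I$ and similarly for $k_J$. Adding the two results and using $\partial_{s'}k=-\partial_s k$ gives
\begin{align*}
    \int_{I\times J}(2k-k_I-k_J)
    &=-\int_{I\times J}\bigl[(s-m_I)-(s'-m_J)\bigr]\partial_s k\\
    &=-\int_{I\times J}(s-s')\,\partial_s k+(m_I-m_J)T_e(I,J).
\end{align*}
The last term equals $-\Delta_e(I,J)$. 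Substituting $-(s-s')\partial_s k=d\frac{(s-s')^2}{\rho^2+(s-s')^2}k$ then yields \eqref{interval-force-2}.

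\emph{Proof of \eqref{interval-force-3}.} Assume without loss of generality that $\gamma:=\gamma_I\le\gamma_J$; the other case is identical after swapping roles. Split the left-hand side as
\begin{align*}
    \gamma\Bigl(\int_{I\times J}2k+\Delta_e\Bigr)+(\gamma_J-\gamma_I)\int_{I\times J}k .
\end{align*}
Multiplying \eqref{interval-force-2} by $\gamma$, the first group equals
\begin{align*}
    \gamma\int_{I\times J}\Bigl(k_I+k_J+d\,\frac{(s-s')^2}{\rho^2+(s-s')^2}\,k\Bigr).
\end{align*}
Finally, since $k_J\ge0$ and $\gamma\le\gamma_J$, we have $\gamma k_I+\gamma k_J\le\gamma_I k_I+\gamma_J k_J$, which gives \eqref{interval-force-3}.

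There is no serious obstacle. The only point needing care is the strict monotonicity of $\Phi$ used in the equality case, and $\rho>0$ guarantees that $\varphi$ is smooth and strictly decreasing, so this holds.
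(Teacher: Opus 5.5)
Your proposal is correct and follows essentially the same route as the paper. You get the sign from the even, radially decreasing function $\Phi(t)=\int_J\varphi(t-s')\,\mathrm{d}s'$ (the paper's $Q$), the identity from integration by parts in $s$ and $s'$ with $\partial_{s'}k=-\partial_s k$, and the inequality by multiplying that identity by $\min\{\gamma_I,\gamma_J\}$ and using $k_I,k_J\ge0$. One small slip: the derivative is $\Phi'(t)=\varphi(t-s'_-)-\varphi(t-s'_+)$, not its negative, and it is this expression that your observation $|t-s'_+|<|t-s'_-|$ shows to be negative for $t>m_J$.
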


\begin{proof}
    We first determine the sign of $T_e(I,J)$. Write $I=(m_I-a,m_I+a)$ and $J=(m_J-b,m_J+b)$, where $a,b>0$, and set
    \begin{align*}
        k_\rho(t)=c_N(\rho^2+t^2)^{-d/2}, \ \ Q(t)=\int_{-b}^b k_\rho(t-\tau)\, \text{d} \tau.
    \end{align*}
    Since $k_\rho$ is even and strictly decreasing on $(0,\infty)$, the function $Q$ is even and satisfies
    \begin{align*}
        Q'(t)=k_\rho(t+b)-k_\rho(t-b)<0,\ \  t>0.
    \end{align*}
    Integrating $\partial_s k$ over $I$, we obtain
    \begin{align*}
        T_e(I,J) = Q(m_I-m_J+a) - Q(m_I-m_J-a).
    \end{align*}
    It follows that $T_e(I,J)$ has the opposite sign to $m_I-m_J$, and vanishes precisely when $m_I=m_J$. This proves \eqref{interval-force-1}.

    We next derive \eqref{interval-force-2}. Using integration by parts in $s$, we obtain
    \begin{align}
        \int_{I\times J} (k-k_I) \ \ds \ds' = - \int_{I\times J} (s-m_I) \partial_s k \ \ds \ds', \label{interval-force-4} \\
        \int_{I\times J}(k-k_J)\ \ds \ds' = - \int_{I\times J}(s'-m_J) \partial_{s'}k\ \ds \ds'. \label{interval-force-5}
    \end{align}
    Since
    \begin{align*}
        \partial_s k = - d \frac{s-s'}{\rho^2+(s-s')^2}k, \ \  \partial_{s'} k = - \partial_s k,
    \end{align*}
    we obtain by adding \eqref{interval-force-4} and \eqref{interval-force-5} that
    \begin{align}\label{interval-force-6}
        \int_{I\times J}(2k-k_I-k_J)\ \ds \ds' = d \int_{I\times J} \frac{(s-s')^2-(m_I-m_J)(s-s')}{\rho^2+(s-s')^2}k \ \ds \ds'.
    \end{align}
    On the other hand, we have
    \begin{align*}
        \Delta_e (I,J) = d(m_I-m_J) \int_{I\times J}\frac{s-s'}{\rho^2+(s-s')^2} k\ \ds \ds',
    \end{align*}
    which combined with \eqref{interval-force-6} yields \eqref{interval-force-2}.

    Finally, let $\gamma=\min\{\gamma_I,\gamma_J\}$. Multiplying \eqref{interval-force-2} by $\gamma$, we obtain
    \begin{align}\label{interval-force-7}
        \int_{I\times J}\gamma(2k-k_I-k_J)\ \ds \ds' + \gamma\Delta_e(I,J) = d \gamma \int_{I \times J } \frac{(s-s')^2}{\rho^2+(s-s')^2} k\ \ds \ds'.
    \end{align}
    Since $ k_I,k_J \geq 0 $, we also have
    \begin{align}\label{interval-force-8}
        \int_{I\times J}(k-k_I)\ \ds \ds' \leq \int_{I\times J} k\ \ds \ds', \ \ \int_{I\times J}(k-k_J)\ \ds \ds' \leq \int_{I\times J} k \ds \ds'.
    \end{align}
    Multiplying the first inequality of \eqref{interval-force-8} by $\gamma_I-\gamma$ and the second by $\gamma_J-\gamma$ respectively, and adding them to \eqref{interval-force-7} proves \eqref{interval-force-3}, because
    \begin{align*}
        (\gamma_I-\gamma)+(\gamma_J-\gamma)=|\gamma_I-\gamma_J|.
    \end{align*}
    This completes the proof.
\end{proof}

Recall that $f=u^q$ and $g=v^p$. For a fixed direction $e\in\mathbb S^{N-1}$, any point $ x \in \R^N$ can be written as $x=z+se$, where $z\in e^\perp$ and $s\in\R$. Given $z\in e^\perp$ and $t>0$, let $\mathcal I_f(e,z,t)$ denote the family of connected components of the open set
\begin{align*}
    \{s\in\R:f(z+se)>t\}.
\end{align*}
Each component is an open interval, which may be unbounded.

Since the components of $ \mathcal{I}_{f}(e,z,t)$ are pairwise disjoint, we have
\begin{align*}
    f(z+se) = \int_0^{\infty} \chi_{\{ f(z + se) > t\}} \ \dt =\int_0^\infty \sum_{I\in\mathcal I_f(e,z,t)}\chi_I(s)\ \dt.
\end{align*}
Similarly, define $\mathcal I_g(e,w,\tau)$ for $g$ and $ e \in \mathbb{S}^{N-1}, w \in e^{\perp}$. Then
\begin{align}\label{repre-fgK}
    & \ \  \int_{\R^N} \int_{\R^N} f(x) g(y) K(x-y)\ \dx \dy \nonumber \\
    & =  \int_{e^\perp} \int_{e^\perp} \int_{\R} \int_{\R} f(z+s e) g(w + s' e) K( z - w + (s-s') e )  \ \text{d}s \ds' \text{d}w \text{d}z \\
    & = \int_{e^\perp} \int_{e^\perp} \int_0^\infty \int_0^\infty \sum_{I \in \mathcal I_f(e,z,t)} \sum_{J \in \mathcal I_g(e,w,\tau)} \left( \int_{ I \times J} k(s,s') \ \ds \ds' \right) \ \text{d}\tau \dt \text{d}w \text{d}z. \nonumber
\end{align}
Here, we recall that $ k(s,s')= K(z-w+(s-s')e) $.

This representation allows us to apply Lemma \ref{lem:interval-force} to the solution $(u,v)$. In particular, if $u$ and $v$ are radial and decreasing about a common point $a \in \R^N$, then for every $e \in \mathbb{S}^{N-1} $, every bounded nonempty superlevel interval of $f$ and $g$ has midpoint $a \cdot e $. Hence all the bounded interval defects $\Delta_e (I,J)$ vanish. The following lemma shows that the superlevel components are bounded for almost every direction and line.

\begin{lemma}\label{almost-bounded}
    Suppose $(u,v)$ is a positive solution of \eqref{Main1}. There is a full-measure subset $ \mathcal{E}$ of $ \mathbb{S}^{N-1}$, such that for every $ e \in \mathcal{E} $ and almost every $z \in e^{\perp}, w \in e^{\perp}$, the connected components in $\mathcal{I}_{f}(e, z, t )$ and $\mathcal{I}_{g}(e, w, \tau )$ are bounded for every $t, \tau >0$.
\end{lemma}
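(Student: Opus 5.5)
The plan is to show that, along almost every line in almost every direction, $f=u^q$ and $g=v^p$ are integrable. A superlevel set of an integrable continuous function on a line cannot contain an unbounded interval. Indeed, if $\{s: f(z+se)>t\}$ contained a half-line, then $\int_{\R} f(z+se)\,\ds \geq t\cdot\infty=\infty$. So it suffices to produce a full-measure set $\mathcal E\subset\mathbb S^{N-1}$ such that for each $e\in\mathcal E$,
\begin{align*}
    \int_{\R} f(z+se)\,\ds<\infty,\ \ \int_{\R} g(w+se)\,\ds<\infty
\end{align*}
for a.e. $z,w\in e^\perp$. Since $t,\tau>0$ are arbitrary, the conclusion then holds for every $t,\tau$ at once on these lines; the exceptional null set of lines does not depend on $t$.

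To get line integrability, I will use the local bounds \eqref{int-uv-bound}: $\int_{B_r(a)} f\leq Cr^\alpha$ and $\int_{B_r(a)}g\leq Cr^\beta$, with $\alpha,\beta<N-1$. The last fact holds because $\alpha+\beta=N-2$ and both are positive, so each is less than $N-2$. The natural tool is a weighted $L^1$ estimate: show that
\begin{align*}
    \int_{\R^N} f(x)(1+|x|)^{1-N}\,\dx<\infty,
\end{align*}
by a dyadic decomposition exactly as in the proof of Lemma \ref{mixed-integral}. The annulus $\{2^j<|x|<2^{j+1}\}$ contributes at most $C2^{j\alpha}2^{-j(N-1)}$, which is summable since $\alpha<N-1$. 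The same holds for $g$, using $\beta<N-1$.

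Next, relate this weight to line integrals. Work in polar-type coordinates for lines: the measure on the space of lines, $\text{d}e\,\text{d}z$ with $e\in\mathbb S^{N-1}$ and $z\in e^\perp$, satisfies a Crofton/coarea-type identity. For each fixed $e$, integrating the line integral over $z\in e^\perp$ simply gives $\int_{\R^N}f$, which may be infinite, so we cannot fix $e$ naively. Instead restrict to lines at bounded distance: for $R>0$, integrate $\int_{\R}f(z+se)\,\ds$ over $e\in\mathbb S^{N-1}$ and $z\in e^\perp\cap B_R$. Changing variables $x=z+se$ gives
\begin{align*}
    \int_{\mathbb S^{N-1}}\int_{e^\perp\cap B_R}\int_{\R} f(z+se)\,\ds\,\text{d}z\,\text{d}e=\int_{\R^N} f(x)\,\omega_R(x)\,\dx,
\end{align*}
where $\omega_R(x)$ is the measure of directions $e$ with $\operatorname{dist}(0,x+\R e)<R$. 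For $|x|\geq R$ this is comparable to $(R/|x|)^{N-1}$, and for $|x|<R$ it is bounded. So the right side is at most $C_R\int f(1+|x|)^{1-N}<\infty$. By Fubini, for a.e. $e$ the line integral is finite for a.e. $z\in e^\perp\cap B_R$. Taking $R\in\mathbb N$ and intersecting countably many full-measure sets of directions, together with the analogous set for $g$, yields $\mathcal E$.

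The main obstacle is the geometric step: verifying the bound $\omega_R(x)\leq C(R/|x|)^{N-1}$ and justifying the change of variables via Tonelli. Both are routine. The directions $e$ for which the line $x+\R e$ passes within distance $R$ of the origin form a spherical cap pair of angular radius $\arcsin(R/|x|)$, whose measure is $\sim (R/|x|)^{N-1}$. Everything else reduces to \eqref{int-uv-bound} and the inequality $\alpha,\beta<N-1$.
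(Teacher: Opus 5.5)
Your proposal is correct and is essentially the paper's proof. Your set of lines $\{x:\operatorname{dist}(0,x+\R e)<R\}$ is exactly the paper's cylinder $T_{e,R}$. Like you, the paper averages $\int_{T_{e,L}} f$ over $e\in\mathbb S^{N-1}$ and bounds the directional weight by two spherical caps, $C\min\{1,(L/|x|)^{N-1}\}$ as in \eqref{zero-defect-0}. It then sums dyadically using \eqref{int-uv-bound} and $\alpha<N-1$, and finishes with Fubini, a countable intersection over $L\in\mathbb N$, and the observation that an unbounded superlevel component forces an infinite line integral. The only cosmetic difference is that the paper records the quantitative bound $CL^\alpha$ for the averaged cylinder integral, whereas you pass through finiteness of $\int f(1+|x|)^{1-N}$.
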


\begin{proof}
    Define the cylinder
    \begin{align*}
        T_{e,L} = \left\{ x \in \R^N: |x-(x\cdot e)e|<  L \right\}.
    \end{align*}
   We claim that
    \begin{align}
        \frac{1}{ |\mathbb S^{N-1}| }  \int_{\mathbb S^{N-1}}\int_{T_{e,L}}f(x)\ \dx \textup{d}S(e) \leq C L^\alpha, \label{zero-defect-1}\\
        \frac{1}{| \mathbb S^{N-1}|} \int_{\mathbb S^{N-1}} \int_{T_{e,L}} g(x)\ \dx \textup{d}S(e) \leq C L^\beta. \label{zero-defect-2}
    \end{align}

    Fix $x$ with $|x|>2L$. If $x\in T_{e,L}$, then $|x|\sin\theta<L$, where $\theta$ is the angle between $x$ and $e$. Hence, every $ e \in \mathbb{S}^{N-1} $ such that $ x \in T_{e,L} $ must lie in one of two spherical caps centered at $x/|x|$ and $-x/|x|$, whose angular radii are bounded by $ \arcsin(L/|x|) \leq CL/|x|$. Using spherical coordinates, we obtain
    \begin{align}
        \frac{1}{|S^{N-1}|} \int_{S^{N-1}} \chi_{T_{e,L}}(x) \ \textup{d}S(e) & \leq \frac{2}{|S^{N-1}|} \int_{S^{N-2}} \int_0^{CL/|x|} (\sin \theta)^{N-2} \ \textup{d} \theta \textup{d}S(\omega) \label{zero-defect-0} \\
        & \leq C\min\left\{1,\left(\frac{L}{|x|}\right)^{N-1}\right\}. \nonumber
    \end{align}
    Decomposing $\R^N$ into $B_{2L}$ and dyadic annuli and using Proposition~\ref{lem-int-repre}, \eqref{zero-defect-0}, we obtain
    \begin{align*}
         \frac{1}{|\mathbb S^{N-1}|} \int_{\mathbb S^{N-1}}\int_{T_{e,L}}f(x)\ \dx \ \textup{d} S(e)  & = \int_{\R^N} f(x) \left( \frac{1}{|\mathbb S^{N-1}|} \int_{\mathbb S^{N-1}} \chi_{T_{e,L}}(x)\ \textup{d} S(e) \right) \ \dx\\
        & \leq C \int_{B_{2L}} f(x)\ \dx + C \sum_{j=1}^\infty 2^{-j(N-1)}\int_{B_{2^{j+1} L}}f(x) \ \dx \\
        &\leq C L^\alpha \left(1+\sum_{j=1}^\infty2^{-j(N-1-\alpha)}\right) \leq C L^\alpha.
    \end{align*}
    Here we used the fact that $ N-1-\alpha=\beta + 1>0 $. Hence, \eqref{zero-defect-1} holds. The estimate \eqref{zero-defect-2} follows similarly.

    From \eqref{zero-defect-1} and \eqref{zero-defect-2}, we obtain that for fixed $L>0$ and almost every direction $e$,
    \begin{align}\label{zero-defect-3}
        \int_{T_{e,L}}f(x)\ \dx + \int_{T_{e,L}} g(x)\ \dx < + \infty.
    \end{align}
    Taking the intersection of these directional sets over all positive integers $L$, we obtain a subset $\mathcal E \subset \mathbb S^{N-1}$ of full measure such that \eqref{zero-defect-3} holds for every $e\in\mathcal E$ and $L>0$. Writing $x=z+se$ and applying Fubini's theorem, we obtain
    \begin{align*}
        \int_{\R} f(z+se) \ \ds < + \infty, \ \   \int_{\R} g(z+se) \ \ds < + \infty
    \end{align*}
    for almost every $z \in e^\perp$. In conclusion, every connected component of $\{ s:f(z+se) > t \}$ is bounded whenever $t>0$. Indeed, an unbounded component $ I $ of $\{ s:f(z+se) > t \}$ would imply
    \begin{align*}
        \int_{\R}f(z+se)\ \ds \geq \int_I f(z+se)\ \ds \geq t |I|=+\infty,
    \end{align*}
    which is a contradiction. The same conclusion holds for $g$. This completes the proof.
\end{proof}

Next, for a bounded open set $\Omega\subset\R^N$, define
\begin{align}\label{D-e-Omega}
    D_e(\Omega) & = \int_{e^\perp} \int_{e^\perp} \int_0^\infty \int_0^\infty \sum_{I \in \mathcal I_f(e,z,t)} \sum_{J \in \mathcal I_g( e,w, \tau)} \chi_{\{ z+ \overline I e \subset \Omega,\; w + \overline J e \subset \Omega\}} \Delta_e (I,J) \ \text{d}\tau \dt \text{d}w \text{d}z,
\end{align}
and
\begin{align}\label{D-Omega}
    D (\Omega) & = \frac{1}{|\mathbb S^{N-1}|} \int_{\mathbb S^{N-1}}D_e(\Omega)\ \text{d} S(e).
\end{align}
Here $z+\overline I e$ denotes the segment $\{z+se:s\in\overline I\}$, and similarly for $w+\overline J e$. 

By Lemma~\ref{lem:interval-force}, we have $D_e(\Omega) \geq 0$ and $ D(\Omega) \geq 0 $. Moreover, both $D_{e}(\Omega)$ and $D(\Omega)$ vanish for two radial decreasing functions $(u,v)$ with a common center. In this section, we will also prove the converse, that is : if $D(B_R)=0$ for every $R>0$, then the solution $(u,v)$ must be radial about a common point and hence belongs to $ \mathcal{G} $, which is defined in \eqref{ground-state-orbit}.

\begin{thm}\label{thm:zero-defect}
    Suppose $(u,v)$ is a positive solution of \eqref{Main1}. If
    \begin{align*}
        D(B_R)=0,\ \ \text{for every }R>0,
    \end{align*}
    then $u$ and $v$ are radial about a common point. Consequently, $(u,v)\in\mathcal G$.
\end{thm}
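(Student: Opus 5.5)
Since $D(B_R)=0$ for every $R$ and $D(B_R)$ is an average over $e$ of the nonnegative quantities $D_e(B_R)$, we get $D_e(B_R)=0$ for almost every $e$ and every $R\in\mathbb N$. We intersect this set of directions with the full-measure set $\mathcal E$ of Lemma~\ref{almost-bounded}. Fix such an $e$. Since $f,g$ are continuous and positive, the integrand in \eqref{D-e-Omega} is nonnegative. Letting $R\to\infty$, every bounded superlevel component eventually has its closed segment inside $B_R$. Hence for a.e.\ $(z,w,t,\tau)$ and every pair $I\in\mathcal I_f(e,z,t)$, $J\in\mathcal I_g(e,w,\tau)$ we have $\Delta_e(I,J)=0$. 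Lemma~\ref{lem:interval-force} then gives $m_I=m_J$. By Lemma~\ref{almost-bounded} all these components are bounded for a.e.\ $z,w$.

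Fix $e$ and a reference pair $(w_0,\tau_0)$ for which $\mathcal I_g(e,w_0,\tau_0)$ is nonempty. All midpoints of components of $f$ on a.e.\ line then equal one number $\mu(e)$. The same holds for $g$, with the same number. In particular, on a.e.\ line parallel to $e$, each superlevel set $\{f>t\}$ has exactly one component, so it is a single interval. Otherwise two disjoint components would share a midpoint, which is impossible. So a.e.\ superlevel set is an interval symmetric about $s=\mu(e)$.

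Next we upgrade to pointwise symmetry. Since $f(z+se)=\int_0^\infty\chi_{\{f>t\}}\,\dt$, we get $f(z+se)=f(z+(2\mu(e)-s)e)$ for a.e.\ $z$. Continuity of $f$ gives this for all $z$, and the map $s\mapsto f(z+se)$ is nonincreasing in $|s-\mu(e)|$. Thus $f$ is symmetric about the hyperplane $\{x\cdot e=\mu(e)\}$, and so is $g$, with the same hyperplane. Continuity in $e$ extends this from a full-measure (hence dense) set of directions to all $e$. Indeed, if $e_k\to e$ then $\mu(e_k)$ stays bounded, because $f>0$ is not constant. The reflected identities then pass to the limit. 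So $f$ and $g$, and hence $u=f^{1/q}$ and $v=g^{1/p}$, are reflection-symmetric with respect to hyperplanes in every direction, and these hyperplanes are common to $u$ and $v$.

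The step I expect to be the main obstacle is showing that all the hyperplanes pass through one point $a$. For this I would take $a$ with $a\cdot e_i=\mu(e_i)$ for a basis $e_1,\dots,e_N$. Then $\mu(e)=a\cdot e$ must hold for all $e$. Otherwise, composing two reflections of $f$ about nonconcurrent hyperplanes would give a nontrivial translation invariance, or an infinite-order rotation/translation group. That would contradict the growth bound $\int_{B_r}f\le Cr^\alpha$ with $\alpha<N$ from Proposition~\ref{lem-int-repre}, since a positive function periodic along a direction has mass growing at least like $r^N$ on suitable balls. Alternatively, one can use the monotonicity in $|s-\mu(e)|$, which forces the maximum set of $f$ to be located where all the symmetry hyperplanes meet. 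With all hyperplanes through $a$, invariance under all reflections through $a$ gives radial symmetry about $a$. Lemma~\ref{radial-uniqueness} then yields $(u,v)\in\mathcal G$.
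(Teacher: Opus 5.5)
Your first half is sound and matches the paper. You get $D_e(B_R)=0$ for a.e.\ $e$, so $\Delta_e(I,J)=0$ for a.e.\ pair, and a Fubini argument gives a common midpoint $\mu(e)$. The layer-cake formula then shows that $f,g$ are symmetric about $\Pi_e=\{x\cdot e=\mu(e)\}$ and nonincreasing in the distance to it.

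The gap is exactly at the step you flag: neither suggestion proves the hyperplanes $\Pi_e$ are concurrent.

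\textbf{Route (a), the growth bound.} Composing reflections in two hyperplanes gives a translation only when they are parallel; otherwise it is a rotation. You never show that the group generated by $R_{e_1},\dots,R_{e_N},R_e$ (reflections across the $\Pi$'s) contains a translation. More importantly, the growth claim is false. A positive continuous function invariant under one translation $x\mapsto x+v$ only has $\int_{B_r(y)}f\gtrsim r$; think of $e^{-|x'|^2}$ with $x'\perp v$. Then $r\le Cr^\alpha$ is no contradiction once $\alpha\ge 1$. This happens, e.g., for $p=q=\frac{N+2}{N-2}$ with $N\ge4$, where $\alpha=\frac{N-2}{2}$.

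\textbf{Route (b), the maximum set.} Take an orthonormal basis of good directions and move one coordinate at a time toward $\Pi_{e_i}$. Non-strict monotonicity shows that $a=\sum_i\mu(e_i)e_i$ is a global maximum point, and symmetry about $\Pi_e$ makes $R_ea$ another one. But nothing you have proved rules out $R_ea\neq a$, i.e.\ a maximum set with more than one point.

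\textbf{The missing ingredient: strict monotonicity.} The paper obtains it from the integral representation, not from the superlevel sets of $f$. On a.e.\ line $w+\R e$ the superlevel sets of $g$ are intervals $(\mu(e)-b,\mu(e)+b)$, so $u=\mathcal I(g)$ gives
\[
u(z+(\mu(e)+s)e)=\int_{e^\perp}\int_0^\infty Q_{\rho,b(w,t)}(s)\,\dt\,\textup{d}w,\qquad Q_{\rho,b}(s)=\int_{-b}^{b}k_\rho(s-\tau)\,\textup{d}\tau .
\]
Here $Q_{\rho,b}'(s)=k_\rho(s+b)-k_\rho(s-b)<0$ for $s>0$, $b>0$. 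Since $g>0$, the set where $b(w,t)>0$ has positive measure. Hence $u$ is strictly decreasing in $|s-\mu(e)|$ on every such line, and likewise for $v=\mathcal I(f)$. Then $a$ is the unique maximum point of $u$ and $v$. Symmetry about $\Pi_e$ forces $R_ea=a$, i.e.\ $\mu(e)=a\cdot e$ for every good $e$.

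\textbf{Extending to all directions.} Your earlier step, ``$\mu(e_k)$ stays bounded because $f>0$ is not constant,'' is not justified as written. It is also unnecessary. Once every $\Pi_e$ with $e$ in the full-measure set passes through $a$, density of that set and continuity of $u,v$ give reflection symmetry in every hyperplane through $a$. Radial symmetry follows, and Lemma~\ref{radial-uniqueness} finishes the proof.
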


\begin{proof}
    By hypothesis, $D(B_k)=0$ for every positive integer $k$. Since
    \begin{align*}
        D(B_k)=\frac1{|\mathbb S^{N-1}|} \int_{\mathbb S^{N-1}}D_e(B_k)\ \textup{d}S(e),
    \end{align*}
    the nonnegativity of $D_e(B_k)$ implies that $ D_e(B_k)=0 $ for almost every $e \in \mathbb S^{N-1}$. Taking a countable intersection and combining it with the set $\mathcal E$ in Lemma \ref{almost-bounded}, we can find a subset $\tilde{\mathcal{E}} \subset \mathbb{S}^{N-1}$ with full measure, such that for any $e \in \tilde{\mathcal{E}} $, all positive superlevel components of both $f$ and $g$ are bounded on almost every line, and $D_e(B_k)=0$ for every $k\in\mathbb N$.

    By the definition of $D_e(B_k)$, we have for $ e \in \widetilde{\mathcal{E}}$ that
    \begin{align*}
       \int_{e^\perp} \int_{e^\perp} \int_0^\infty \int_0^\infty \sum_{I\in\mathcal I_f(e,z,t)} \sum_{J \in \mathcal I_g(e,w,\tau)} \chi_{\{z+ \overline I e \subset B_k,\;w + \overline J e\subset B_k\}} \Delta_e(I,J)\ \textup{d} \tau \dt \textup{d}w \textup{d}z = 0.
    \end{align*}
    Every term in this integral is nonnegative. Moreover, every interval pair $(I, J)$ is contained in some sufficiently large ball. Thus, by monotone convergence as $k\to\infty$, $ \Delta_e(I,J)=0 $ for almost every pair $(I,J)$. Then Lemma \ref{lem:interval-force} implies $ m_I=m_J $ for almost every such pair $(I,J)$.

    We claim that these intervals have one common midpoint, depending only on $e$. By Fubini's theorem, we may choose a bounded component $I_0$ of $f$ such that $m_J=m_{I_0}$ for almost every component $J$ of $g$. Set $c_e=m_{I_0}$. Choose $J_0$ from this full-measure set and from the set of components $J$ for which $m_I=m_J$ for almost every $I$. Then $m_I=c_e$ for almost every component $I$ of $f$. Therefore, $ m_I=m_J=c_e $ for almost every interval pair $(I,J)$. In particular, for almost every $z \in e^\perp$ and almost every $ t > 0 $, every connected component of $ \{s\in\R:f(z+se)>t\} $ has midpoint $c_e$. There can be at most one such component, because two nonempty open intervals with the same midpoint must intersect. Consequently, each nonempty superlevel set is an interval of the form $ (c_e-r,c_e+r) $ for some $r>0$.

    Write $ x = z + s e$ for $ z \in e^{\perp} $, we obtain
    \begin{align}\label{zero-defect-4}
        f(z+(c_e+s)e)=f(z+(c_e-s)e)
    \end{align}
    and
    \begin{align}\label{zero-defect-5}
        f(z+(c_e+s_1)e)\geq f(z+(c_e+s_2)e), \ \  0  \leq s_1 < s_2,
    \end{align}
    for almost every $ z \in e^\perp $. The same statements hold for $g$. Since $u$ and $v$ are continuous, \eqref{zero-defect-4} and \eqref{zero-defect-5} hold for every $z\in e^\perp$. Since $f=u^q$ and $g=v^p$, with $p,q>0$, we conclude that both $u$ and $v$ are symmetric with respect to the hyperplane
    \begin{align*}
        \Pi_e=\{x\in\R^N:x\cdot e=c_e\},
    \end{align*}
    and are nonincreasing along every line perpendicular to $\Pi_e$ as the distance from $\Pi_e$ increases, for any $ e \in \tilde{\mathcal{E}}$.

    Next, we show that the monotonicity in \eqref{zero-defect-5} is strict. Fix $z\in e^\perp$. Proposition~\ref{lem-int-repre} gives
    \begin{align}\label{zero-defect-6}
        u(z+(c_e+s)e) = \int_{e^\perp} \int_{\R} K(z-w+(s-\tau)e) g(w+(c_e+\tau)e) \ \textup{d} \tau \textup{d} w.
    \end{align}
    For almost every $ w \in e^\perp $, the function $\tau \mapsto g (w+(c_e+\tau)e)$ is positive, even, and nonincreasing in $|\tau|$. Each nonempty positive superlevel set is therefore a bounded interval $(-b,b)$, where $b=b(w,t)$. For a nonempty interval and $w\ne z$, set $\rho=|z-w|$ and define
    \begin{align*}
        k_\rho(s) = c_N(\rho^2+s^2)^{-(N-2)/2}, \ \ Q_{\rho,b}(s) = \int_{-b}^b k_\rho(s-\tau)\ \text{d} \tau.
    \end{align*}
    Since $k_\rho$ is even and strictly decreasing on $(0,\infty)$, we have
    \begin{align*}
        Q_{\rho,b}'(s) = k_\rho(s+b) - k_\rho(s-b)< 0, \ \  s>0.
    \end{align*}
    Thus $Q_{\rho,b}$ is strictly decreasing for $s\geq0$ when $b>0$. Since
    \begin{align*}
        g(w + (c_e + \tau) e) = \int_0^{\infty} \chi_{\{ |\tau| < b(w, t) \}} \ \dt,
    \end{align*}
    we obtain from \eqref{zero-defect-6} that
    \begin{align}\label{zero-defect-add}
        u(z+(c_e+s)e) & = \int_{e^\perp} \int_0^\infty \int_{-b(w, t)}^{b(w, t)} k_{\rho}(s- \tau)\  \textup{d} \tau \dt \textup{d} w  = \int_{e^\perp} \int_0^\infty Q_{ \rho , b(w, t)} (s)\ \dt \textup{d} w.
    \end{align}
    Each $Q_{\rho,b}$ is nonincreasing on $[0,\infty)$, and it is strictly decreasing when $b>0$. Since $g>0$, the set of $(w,t)$ with $b(w,t)>0$ has positive measure. Hence, \eqref{zero-defect-add} implies
    \begin{align}\label{zero-defect-7}
        u(z+(c_e+s_1)e) > u(z+(c_e+s_2)e), \ \ 0 \leq s_1 < s_2,
    \end{align}
    for $e\in\tilde{\mathcal E}$. The same argument applied to $v=\mathcal I(u^q)$ gives the analogous strict inequality for $v$.

     We now choose an orthonormal basis $e_1,\ldots,e_N \in \tilde{\mathcal{E}} $. This is possible because $\tilde{\mathcal E}$ has full measure in $ \mathbb{S}^{N-1} $. Let
    \begin{align*}
        \Pi_i = \{x \in \R^N: x \cdot e_i = c_{e_i} \}, \ \ i=1,\ldots,N.
    \end{align*}
    These hyperplanes have a unique intersection point $ a=\sum_{i=1}^N c_{e_i}e_i $. We claim that $a$ is the unique global maximum point of both $u$ and $v$. To see this, suppose $ x \neq a$ and set $x^{(0)}=x$. For $i=1,\ldots,N$, define
    \begin{align*}
        x^{(i)} =x^{(i-1)} +\bigl(c_{e_i}-x^{(i-1)}\cdot e_i \bigr)e_i.
    \end{align*}
    Then $x^{(N)}=a$. At each step, the point moves along a line parallel to $e_i$ toward the hyperplane $\Pi_i$. By \eqref{zero-defect-7}, we obtain
    \begin{align*}
        u(x^{(i)})\geq u(x^{(i-1)}), \ \  v(x^{(i)})\geq v(x^{(i-1)}).
    \end{align*}
    Since $x \neq a$, at least one coordinate changes, and therefore
    \begin{align*}
        u(x)<u(a), \ \  v(x)<v(a), \ \  x\neq a.
    \end{align*}
    Thus $a$ is the unique maximum point of both $ u$ and $ v$.

    We now return to an arbitrary direction $e \in \tilde{\mathcal{E}}$. Since $u$ and $v$ are symmetric with respect to $\Pi_e$, the reflection of their common maximum point $a$ across $\Pi_e$ must also be a maximum point. Since the maximum point is unique, we deduce that $a \in \Pi_e$. Consequently, $ a\cdot e=c_e $ for every $ e \in \tilde{\mathcal{E}} $. Since $ \tilde{\mathcal{E}} $ has full measure, it is dense in $\mathbb S^{N-1}$. By continuity of $u,v$, the reflection symmetry extends to every hyperplane through $a$. Hence both components are radial about $a$.

    Finally, Lemma \ref{radial-uniqueness} implies that $(u,v)$ is a translation and critical dilation of $(U,V)$. Therefore $(u,v) \in \mathcal G $, as required.
\end{proof}

\section{Estimate of the defect}\label{sec:energy}

In this section, we apply Lemma~\ref{lem:interval-force} to estimate $D_e(\Omega)$. Since $D_e(\Omega)$ is defined using a characteristic function, we first introduce a smooth cutoff. Let $ \eta \in C_c^\infty(\R^N)$ be a radial nonincreasing function satisfying
\begin{align*}
    0 \leq \eta \leq 1, \ \  \eta=1 \ \ \text{in} \ \  B_1, \ \  \eta=0 \ \ \text{outside} \ \ B_2,
\end{align*}
and set $\eta_R(x)=\eta(x/R)$ for $R>0$. For a complete interval $I$ on the line $z+\R e$, we define
\begin{align*}
    \chi_R(I)= \inf_{s\in I} \eta_R(z+se).
\end{align*}
Then $0\leq\chi_R(I)\leq\eta_R(z+se)$ for $s\in I$. Moreover, $\chi_R(I)>0$ implies that $z+\overline I e\subset B_{2R}$.

We now define the localized defect
\begin{align}\label{D-e-R}
    D_{e,R} &= \int_{e^\perp} \int_{e^\perp} \int_0^\infty \int_0^\infty \sum_{ I \in \mathcal I_f(e,z,t)} \sum_{J \in \mathcal I_g(e,w,\tau)} \min\{\chi_R (I),\chi_R (J)\} \Delta_e(I,J) \ \text{d} \tau \dt \text{d}w \text{d}z,
\end{align}
and
\begin{align}\label{D-R}
    D_R & = \frac{1}{|\mathbb S^{N-1}|} \int_{\mathbb S^{N-1}} D_{e,R}\ \text{d} S(e).
\end{align}
A weighted defect with zero coefficient is defined to be zero, without evaluating the midpoint of an unbounded interval. The reason for taking the infimum of $\eta_R$ on each interval is that Lemma~\ref{lem:interval-force} requires constant weights on the intervals. At the same time, the Lipschitz continuity of $\eta_R$ gives
\begin{align*}
    0 \leq \eta_R(z+se)-\chi_R(I)\leq\frac{C}{R}|I|, \ \ s\in I,
\end{align*}
whenever $I$ is bounded.

We first establish the basic properties of these defects.

\begin{lemma}\label{lem:defect-properties}
    For every bounded open set $\Omega\subset\R^N$ and every $R>0$, $ D(\Omega) $ and $ D_R $ are nonnegative and finite. Moreover, the following properties hold.

    \textup{(1)}. For $a \in \R^N$ and $\lambda > 0$, define
    \begin{align*}
        \widetilde u(x)=\lambda^\alpha u(a+\lambda x), \ \  \widetilde v(x)=\lambda^\beta v(a+\lambda x),
    \end{align*}
    Then
    \begin{align*}
        D_{\widetilde u, \widetilde v}(\Omega) = D_{u,v} (a+\lambda\Omega).
    \end{align*}

    \textup{(2)}. If $\Omega_1, \ldots, \Omega_k$ are pairwise disjoint bounded open subsets of $B_R$, then
    \begin{align*}
        \sum_{i=1}^k D(\Omega_i)\leq D_R.
    \end{align*}
\end{lemma}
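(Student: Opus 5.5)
Nonnegativity is immediate from Lemma~\ref{lem:interval-force}. In the definitions \eqref{D-e-Omega} and \eqref{D-e-R} every integrand is a product of a nonnegative indicator or weight with $\Delta_e(I,J)\geq0$. Moreover, the weights vanish unless both intervals are bounded, so $\Delta_e(I,J)$ is only evaluated where it is defined.

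\emph{Finiteness.} First bound $\Delta_e(I,J)$ by the interval interaction. Since $|\partial_s k|\leq d|s-s'|(\rho^2+(s-s')^2)^{-1}k\leq d\,\rho^{-1}k$, we only get a $\rho^{-1}$ loss, which is awkward. The cleaner route is identity \eqref{interval-force-2}: it gives
\[
\Delta_e(I,J)\leq d\int_{I\times J}k\,\ds\,\ds'+\int_{I\times J}(k_I+k_J)\,\ds\,\ds'.
\]
Summing over components and levels, the first term reproduces, via \eqref{repre-fgK} restricted to $B_{2R}$, the quantity
\[
\int\!\!\int \chi_{B_{2R}}(x)f(x)\,K(x-y)\,\chi_{B_{2R}}(y)g(y)\,\dx\,\dy\leq\int_{B_{2R}}fu<\infty.
\]
For the endpoint terms $k_I$, we have $\int_{I\times J}k_I=|I|\int_J k_I$, and $\sum_I|I|$ over components at level $t$, integrated in $t$, gives $\int f$ along the line. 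Meanwhile $\int_J k(s_\pm,s')\,\ds'$ summed over $J$ and integrated in $\tau$ is at most the potential $\mathcal I(g\chi_{B_{2R}})$ evaluated at the endpoint $z+s_\pm e$, which lies in $\overline{B_{2R}}$. Since $u$ is continuous, hence bounded on $\overline{B_{2R}}$, this is at most $\sup_{B_{2R}}u$. So the $k_I$ contribution is at most $\sup_{B_{2R}}u\int_{B_{2R}}f$, and symmetrically for $k_J$. Everything is finite for each $e$ and uniformly in $e$, so integrating over the sphere gives finiteness of $D_R$. Finiteness of $D(\Omega)$ follows since $\Omega\subset B_R$ for some $R$ and $D(\Omega)\leq D_R$, which is the case $k=1$ of property (2).

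\emph{Property (1).} Change variables. Superlevel components of $\tilde f=\tilde u^q=\lambda^{\alpha q}f(a+\lambda\cdot)$ on the line $z+\R e$ at level $t$ correspond to components $\lambda I+(a\cdot e)$ of $f$ on the line $a_\perp+\lambda z+\R e$ at level $\lambda^{-\alpha q}t$, where $a_\perp$ is the component of $a$ in $e^\perp$. The midpoint difference scales by $\lambda$. Next, $T_e$ scales by $\lambda^{-2}\cdot\lambda^{d-1}\cdot\lambda^{-(d-1)}\ldots$; concretely, $k(\lambda\cdot)=\lambda^{-d}k$, $\partial_s$ contributes $\lambda^{-1}$, and $\ds\,\ds'$ contributes $\lambda^{2}$. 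Together these make $\Delta_e$ scale by $\lambda^{2-d}$. The measures $\textup{d}t\,\textup{d}\tau\,\textup{d}z\,\textup{d}w$ contribute $\lambda^{\alpha q+\beta p}\lambda^{-2(N-1)}$. The total exponent is $2-d+\alpha q+\beta p-2N+2$. Using $\alpha q=\beta+2$, $\beta p=\alpha+2$ and $\alpha+\beta=d$, this equals $2-d+d+4-2N+2=8-2N$, which is not zero, so I must have the direction of the substitution inverted. I will redo the bookkeeping carefully, writing the defect as $\int\!\!\int f g$-type data: the defect is bilinear in $(f,g)$, exactly like $\int\!\!\int f(x)g(y)K(x-y)$. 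That integral is scale-invariant precisely because the energy $\int fu$ is invariant, and the same homogeneity applies to the defect. The indicator condition $\overline I\subset\Omega$ transforms into the condition $\subset a+\lambda\Omega$. Averaging over $e$ completes (1).

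\emph{Property (2).} If $\Omega_i\subset B_R$ are disjoint, then a segment $z+\overline Ie$ lies in at most one $\Omega_i$, and likewise for $J$. Hence $\sum_i\chi_{\{\overline I,\overline J\subset\Omega_i\}}\leq\chi_{\{\overline I,\overline J\subset B_R\}}\leq\min\{\chi_R(I),\chi_R(J)\}$, because $\eta_R=1$ on $B_R$. Multiplying by $\Delta_e\geq0$ and integrating gives the claim.

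The main obstacle is the finiteness estimate, specifically controlling the endpoint terms $k_I,k_J$ by potentials evaluated at endpoints inside $\overline{B_{2R}}$.
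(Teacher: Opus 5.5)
Your nonnegativity argument, your proof of (2), and your deduction of $D(\Omega)<\infty$ from $D(\Omega)\le D_R$ all match the paper. For finiteness of $D_R$ you take a different, correct route. You use \eqref{interval-force-2} to get $\Delta_e(I,J)\le d\int_{I\times J}k+\int_{I\times J}(k_I+k_J)$. For each fixed $I$, you sum over $J$ to obtain a potential of $g$ evaluated at the endpoints of $I$. This bounds $D_{e,R}$ by $d\int_{B_{2R}}fu+\sup_{B_{2R}}u\int_{B_{2R}}f+\sup_{B_{2R}}v\int_{B_{2R}}g$, uniformly in $e$.

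The paper keeps the midpoint factor instead. Both segments lie in $B_{2R}$ and $z,w\perp e$, so $|m_I-m_J|\le 4R$. Since also $|\partial_s k|\le|\nabla K(x-y)|$, one gets $D_{e,R}\le 4R\int_{B_{2R}}\int_{B_{2R}}f(x)g(y)|\nabla K(x-y)|\,\dx\,\dy$, which is finite because $|\nabla K|\in L^1_{\loc}$. Your worry about a $\rho^{-1}$ loss was unfounded: $|s-s'|/(\rho^2+(s-s')^2)\le|x-y|^{-1}$, so the derivative bound is locally integrable in $\R^N$. The paper's route is shorter. Yours avoids the midpoint factor and bounds the defect by local energy-type quantities, in the spirit of Lemma~\ref{E-R-bound}.

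In (1), your exponent count mixes two directions of substitution. The factor $\lambda^{2-d}$ you found expresses $\Delta_e(I,J)$ in terms of $\widetilde\Delta_e(\widetilde I,\widetilde J)$. The factor $\lambda^{\alpha q+\beta p-2(N-1)}$ goes the other way: it expresses $\textup{d}t\,\textup{d}\tau\,\textup{d}z\,\textup{d}w$ in terms of the original variables $\textup{d}t'\,\textup{d}\tau'\,\textup{d}Z\,\textup{d}W$. Written consistently, $\widetilde\Delta_e(\widetilde I,\widetilde J)=\lambda^{d-2}\Delta_e(I,J)$, and the total exponent is $(d-2)+(d+4)-2(N-1)=0$.

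Your homogeneity remark gives the right reason, but state it explicitly. The product $(m_I-m_J)\partial_s k$ is homogeneous of the same degree as $k$. Hence the defect scales like $\iint fgK=\int fu$, which is invariant since $\alpha(q+1)=N$. As written, your (1) ends with an acknowledged wrong computation rather than a proof. Replace it with the corrected count, together with the observation that $z+\overline{\widetilde I}e\subset\Omega$ if and only if $Z+\overline I e\subset a+\lambda\Omega$.
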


\begin{proof}
    The nonnegativity of $D(\Omega)$ and $D_R$ follows immediately from Lemma~\ref{lem:interval-force}. We then show that they are finite. If $\chi_R(I)$ and $\chi_R(J)$ are both positive, then $z+\overline I e$ and $w+\overline J e$ are contained in $B_{2R}$. Therefore, their midpoint difference satisfies $ |m_I-m_J| \leq 4 R $. Hence
    \begin{align*}
        \Delta_e(I,J) \leq 4R \int_{I\times J} |\partial_s k(s,s')| \ \ds \ds',
    \end{align*}
    which, together with \eqref{repre-fgK}, \eqref{D-e-R}, and $ | \partial_s k| \leq |\nabla K| $ yields
    \begin{align*}
        D_{e,R} \leq 4R \int_{B_{2R}} \int_{B_{2R}} f(x)g(y) |\nabla K(x-y)| \,dy\,dx<+\infty.
    \end{align*}
    Indeed, $f$ and $g$ are bounded on $B_{2R}$, while $|\nabla K(x)|\leq C|x|^{1-N}$ is locally integrable. This estimate is uniform in $e$, and therefore $D_R<+\infty$.

    We next show that $D(\Omega)$ is finite. Since $\eta_R=1$ in $B_R$, the weight $\min\{\chi_R(I),\chi_R(J)\}$ equals one whenever both interval closures lie in $B_R$. Besides, $ \min\{ \chi_R(I), \chi_R(J) \} > 0 $ implies that $ z + \bar{I} e, w + \bar{J}e \subset B_{2R}$. Thus
    \begin{align*}
        D(B_R) \leq D_R \leq D(B_{2R}).
    \end{align*}
    By taking $R$ sufficiently large that $ \Omega \subset B_R$, we obtain $D(\Omega) \leq D_R < +\infty$.

    For the scaling property, let $\widetilde f=\widetilde u^q$ and $\widetilde g=\widetilde v^p$. Then
    \begin{align*}
        \widetilde f(x) = \lambda^{q \alpha} f(a+\lambda x),\ \  \widetilde g(x) = \lambda^{p\beta} g(a+\lambda x).
    \end{align*}
    Now, fix $ e \in S^{N-1}, $ we write $ a = a_\perp + a_\parallel e $, where $ a_\perp \in e^\perp $ and $ a_\parallel = a \cdot e$. For $ z, w \in e^\perp $ and $ s,s' \in \mathbb R $, let
    \begin{align}\label{ZWSS'}
        Z = a_\perp + \lambda z, \ \ W=a_\perp + \lambda w, \ \  S = a_\parallel + \lambda s, \ \  S' = a_\parallel + \lambda s'.
    \end{align}
    Then
    \begin{align*}
        \widetilde f(z+se) = \lambda^{q\alpha} f(Z+Se), \ \  \widetilde g(w+s'e) = \lambda^{p\beta} g(W+S'e).
    \end{align*}
    In particular, setting $ t' = \lambda^{- q \alpha} t $ and $ \tau' = \lambda^{-p\beta} \tau $, we obtain
    \begin{align*}
        \{s: \widetilde f( z + se )>t\} & = \lambda^{-1} \bigl( \{ S: f( Z + Se ) > t'\} - a_\parallel \bigr), \\
        \{s': \widetilde g( w + s'e ) > \tau \} & = \lambda^{-1} \bigl( \{S': g(W+S'e) > \tau' \} -a_\parallel \bigr).
    \end{align*}
    Thus every interval $\widetilde I$ associated with $\widetilde f$ and $t$ corresponds to the interval $I = a_\parallel + \lambda\widetilde I$ associated with $f$ and $ \lambda^{-q \alpha} t$, and similarly $ J = a_\parallel + \lambda \widetilde J $ for $ g $ and $ \lambda^{-p \beta} \tau $. Consequently,
    \begin{align*}
        m_{\widetilde I}=\frac{m_I-a_\parallel}{\lambda}, \ \  m_{\widetilde J} = \frac{m_J-a_\parallel}{\lambda},
    \end{align*}
    and hence $ m_{\widetilde I}-m_{\widetilde J} = \lambda^{-1}(m_I-m_J) $.
    Next, for the Green function, we have
    \begin{align*}
        \widetilde k(s,s')  = K(z-w+(s-s')e) = \lambda^{N-2} K(Z-W+(S-S')e) =\lambda^{N-2}k(S,S').
    \end{align*}
    It follows that $ \partial_s\widetilde k(s,s') = \lambda^{N-1} \partial_S k(S,S') $. Thus we obtain
    \begin{align*}
        \widetilde T_e(\widetilde I,\widetilde J) & = \int_{\widetilde I\times\widetilde J} \partial_s\widetilde k(s,s')\ \ds \ds' = \lambda^{N-3} \int_{I\times J} \partial_S k(S,S')\ \text{d}S \text{d} S' = \lambda^{N-3} T_e(I,J).
    \end{align*}
    Therefore,
    \begin{align*}
        \widetilde \Delta_e( \widetilde I, \widetilde J ) = -( m_{\widetilde I} - m_{\widetilde J} ) \widetilde T_e( \widetilde I, \widetilde J) =\lambda^{N-4}\Delta_e(I,J).
    \end{align*}
    Moreover, $z+\overline{\widetilde I}e\subset\Omega$ if and only if $Z+\overline I e\subset a+\lambda\Omega$, and similarly for $J$. Then, from \eqref{ZWSS'} and $ t' = \lambda^{-q \alpha} t$, $ \tau' = \lambda^{-p \beta} \tau $, we have
    \begin{align*}
        D_e^{\widetilde u,\widetilde v}(\Omega) = \lambda^{N-4 + q\alpha + p \beta- 2(N-1)} D_e^{u,v}(a + \lambda \Omega )  = \lambda^{q \alpha + p \beta- N- 2} D_e^{u,v}(a + \lambda \Omega) =  D_e^{u,v}(a + \lambda \Omega).
    \end{align*}

    Finally, suppose that $\Omega_1,\ldots,\Omega_k\subset B_R$ are pairwise disjoint. Then
    \begin{align*}
        \sum_{i=1}^k \chi_{\{z+ \overline I e\subset \Omega_i, \;w+\overline J e \subset\Omega_i\}} \leq 1.
    \end{align*}
    Moreover, for every such interval $ I, J$, we have $ z + \bar{I} e, w + \bar{J} e \subset \Omega_i\subset B_R$ for some $i$, and hence $ \min\{ \chi_R (I), \chi_R(J) \} =1 $, which implies
    \begin{align*}
        \sum_{i=1}^k D(\Omega_i)\leq D_R.
    \end{align*}
    This completes the proof.
\end{proof}

To estimate $D(B_R)$, we estimate $D_R$.

\begin{prop}\label{prop:defect-energy}
    Suppose $(u,v)$ is a positive solution of \eqref{Main1}. For every fixed
    \begin{align*}
        0<\sigma<\min\{1,\alpha,\beta\},
    \end{align*}
    there exists a constant $C>0$, depending only on the equation parameters and $\sigma$, such that
    \begin{align*}
        D_R\leq C (E(2R)^{1-\sigma/N} + E(2R)^{1-1/N} ), \ \  R>0,
    \end{align*}
    where
    \begin{align*}
        E(R)=\int_{B_R}\left(u^{q+1}+v^{p+1}\right)\ \dx.
    \end{align*}
\end{prop}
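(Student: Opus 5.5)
We plan to derive the bound from the interval inequality \eqref{interval-force-3} of Lemma~\ref{lem:interval-force}, applied with weights $\gamma_I=\chi_R(I)$ and $\gamma_J=\chi_R(J)$ and then integrated over the layer-cake decomposition \eqref{repre-fgK}. Summed over all $I\in\mathcal I_f(e,z,t)$, $J\in\mathcal I_g(e,w,\tau)$ and integrated in $t,\tau,z,w$, the left side of \eqref{interval-force-3} is at least
\begin{align*}
\int\!\!\int f(x)g(y)\bigl(\eta_R(x)+\eta_R(y)\bigr)K(x-y)\,\dx\dy + D_{e,R} - (\text{error from }\eta_R-\chi_R),
\end{align*}
and $\int\!\int f g(\eta_R(x)+\eta_R(y))K = \int \eta_R (fu+gv)=\int\eta_R H$. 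The right side contains the endpoint averages $k_I,k_J$, the term $d\min\{\gamma_I,\gamma_J\}\frac{(s-s')^2}{\rho^2+(s-s')^2}k$, and $|\gamma_I-\gamma_J|k$. After averaging over $e\in\mathbb S^{N-1}$, the endpoint and angular terms should reassemble into an expression that equals $\int\eta_R H$ to leading order. This is where the critical relation $\alpha+\beta=d$ enters: it is a Pohozaev-type identity in disguise. The factor $\frac{(s-s')^2}{|x-y|^2}$ averages over directions to $1/N$, giving $\frac{d}{N}\int\!\int \eta fgK$. The endpoint terms $k_I,k_J$ should correspond, via the coarea/layer-cake structure, to $\frac{1}{q+1}$ and $\frac{1}{p+1}$ fractions of the energy. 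They arise because integrating $k$ against the level intervals at their endpoints is like $\int_0^{f}\dt$ evaluated with $t=f$, producing $\int f^{(q+1)/q}\cdot$ weights. Using $\frac{N}{q+1}+\frac{N}{p+1}=d$, the leading terms then cancel exactly.

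Concretely, the steps are as follows. (i) Rewrite $\sum_I\int_0^\infty k_I\,dt$ using the endpoints of the superlevel intervals, that is, points where $f=t$. By a one-dimensional change of variables on each line, $\int_0^\infty\sum_I (\text{endpoint values})\dt$ becomes an integral involving $s\,\partial_s f$, i.e.\ a directional virial term $\int (x\cdot e)\partial_e f\,\cdots$. Averaging over $e$ gives $\frac1N\int x\cdot\nabla f\cdots$. Integrating by parts against $v=\mathcal I f$ and $\eta_R$ then produces $-\frac{N}{q+1}$-type multiples of $\int\eta_R u^{q+1}$, plus terms where the derivative falls on $\eta_R$. (ii) Do the same for $g$. (iii) Combine with the angular term to see cancellation of all terms of order $\int\eta_R H$, leaving only terms with $\nabla\eta_R$ and terms with $\eta_R-\chi_R$ or $|\chi_R(I)-\chi_R(J)|$.

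The main obstacle is bounding these remainders by sublinear powers of $E(2R)$. Each remainder carries a factor $R^{-1}$ times a length: $|I|$, $|x-y|$, or $|x|$ inside $B_{2R}$. Typical terms look like
\begin{align*}
\frac1R\int_{B_{2R}}\int f(x)g(y)|x-y|K(x-y)\,\dx\dy \quad\text{or}\quad \frac1R\int_{B_{2R}}|x|\,|\nabla u|\,f.
\end{align*}
We would split $|x-y|\le r$ and $|x-y|>r$. The near part uses Hölder and HLS with the slightly subcritical kernel $|x-y|^{3-N}$, which yields a factor $r$ times $E(2R)^{1-1/N}$ through the scaling mismatch. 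The far part uses the uniform ball bounds \eqref{int-uv-bound}, $\int_{B_r}f\le Cr^\alpha$ and $\int_{B_r}g\le Cr^\beta$, together with Hölder against $E$. The exponent $\sigma<\min\{1,\alpha,\beta\}$ appears because one interpolates between the scale-invariant bound \eqref{int-uv-bound} and the energy, and borrows $\sigma$ powers of the scale to make dyadic sums converge. Optimizing $r$ relative to $R$ produces $E(2R)^{1-\sigma/N}$. The term $|\gamma_I-\gamma_J|k$ is handled by $|\chi_R(I)-\chi_R(J)|\le \frac CR(|I|+|J|+|x-y|)$, and interval lengths are converted back to pointwise quantities via $\int_0^\infty\sum_I|I|\chi_I\,dt$. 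This is where I would expect the most technical care, namely controlling long level intervals, possibly using Lemma~\ref{almost-bounded} to discard unbounded ones, which have zero weight anyway.
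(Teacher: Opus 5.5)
Your overall framework matches the paper's: apply \eqref{interval-force-3} with $\gamma_I=\chi_R(I)$, $\gamma_J=\chi_R(J)$, integrate over the layer-cake representation, average $\Theta_e$ to $1/N$, and cancel with the critical hyperbola. But steps (i)--(iii) go wrong, and they miss the identity that closes the argument. If $I=(s_-,s_+)$ is a bounded component of $\{s:f(z+se)>t\}$, both endpoints lie on $\{u=t^{1/q}\}$. Since $u=\mathcal I g$, integrating $k_I$ against $g$ (sum over $J$, integrate in $w,\tau$) gives exactly $\tfrac12\bigl(u(z+s_-e)+u(z+s_+e)\bigr)=t^{1/q}$. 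Hence the whole $\gamma_I(k-k_I)$ contribution equals $\int_{\R^N}\int_0^{f(x)}\chi_R(I_{e,x,t})\bigl(u(x)-t^{1/q}\bigr)\dt\,\dx\ge\frac{1}{q+1}\int\eta_Ru^{q+1}-L_{f,R}(e)$. This is pure layer-cake, with no derivative of $u$, no origin, and no integration by parts.

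Your virial rewriting instead splits $|I|=s_+-s_-$ into separate endpoint contributions $\pm s_\pm$ measured from a global origin. Once the interval weight $\chi_R(I)$ is replaced by pointwise weights $\eta_R(x_\pm)$ (which you need in order to integrate by parts against $\eta_R$), the error is $\frac{C}{R}|I|\,|s_\pm|$. Since $|s_\pm|$ can be of order $R$, that is $O(|I|)$ per interval, the same order as the main term. Concretely, your ``terms where the derivative falls on $\eta_R$'' are $\frac{q}{N(q+1)}\int(x\cdot\nabla\eta_R)\,u^{q+1}$ and its analogue for $v$. Since $|x\cdot\nabla\eta_R|\sim 1$ on $B_{2R}\setminus B_R$, the factor $R^{-1}$ is eaten by $|x|\le 2R$. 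These terms are of size $\int_{B_{2R}\setminus B_R}H$, exactly the Pohozaev boundary term that cannot be controlled a priori without decay, let alone by $E(2R)^{1-\sigma/N}$. The same objection applies to your sample remainder $\frac{1}{R}\int_{B_{2R}}|x|\,|\nabla u|\,f$, which moreover involves $\nabla u$, uncontrolled for unbounded solutions. The virial structure you are after is already built into Lemma~\ref{lem:interval-force}, centred at $m_I$ (see \eqref{interval-force-4}), and should not be redone with a global origin.

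Two further points need repair. First, the leading terms cancel exactly only when $p=q$. In general what survives is $\frac12\bigl(\frac{1}{p+1}-\frac{1}{q+1}\bigr)(E_{f,R}-E_{g,R})$, with $E_{f,R}=\int\eta_Ru^{q+1}$, $E_{g,R}=\int\eta_Rv^{p+1}$, and $E_{f,R}-E_{g,R}=\iint(\eta_R(x)-\eta_R(y))f(x)g(y)K(x-y)\,\dx\,\dy$; so it is absorbed into the commutator term $C_R$. Second, and more important, you give no mechanism for the $\eta_R-\chi_R$ errors. The bound $\eta_R-\chi_R(I)\le\frac{C}{R}|I|$ is useless without control of long bounded intervals, and Lemma~\ref{almost-bounded} is irrelevant here, since unbounded intervals carry zero weight anyway. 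The required input is the direction-averaged length bound $\frac{1}{|\mathbb S^{N-1}|}\int|I_{e,x,t}|\,\textup{d}S(e)\le Ct^{-1/(N-\alpha)}$. It follows from $\int_{B_{\ell/2}(x)}f\le C\ell^\alpha$, because $|I_{e,x,t}|>\ell$ forces a ray of length $\ell/2$ from $x$ in direction $e$ or $-e$ to lie in $\{f>t\}$. Integrating in $t$ gives $\frac{C}{R}\int_{B_{2R}}\bigl(u^{(N-1)/\alpha}+v^{(N-1)/\beta}\bigr)$, and H\"older yields the $E(2R)^{1-1/N}$ term in the statement. For $C_R$, the paper uses the pointwise bound $\int|x-y|^\sigma K(x-y)g(y)\,\dy\le Cu(x)^{1-\sigma/\alpha}$, splitting at $\rho=u(x)^{-1/\alpha}$ and using \eqref{int-uv-bound}, then H\"older. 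Your fixed-scale splitting could plausibly be made to work with $|x-y|^\sigma$, but as written it is only a sketch.
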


We first prove a lemma used in Proposition~\ref{prop:defect-energy}.

\begin{lemma}\label{E-R-bound}
    Suppose $(u,v)$ is a positive solution of \eqref{Main1}, and define
    \begin{align*}
        C_R & = \int_{\R^N} \int_{\R^N} |\eta_R(x) - \eta_R(y)| f(x) g(y) K(x-y) \ \dx \dy, \\
        L_{R} & =  \frac{1}{|\mathbb{S}^{N-1}|} \int_{\mathbb{S}^{N-1}} ( L_{f,R}(e) + L_{g,R}(e) ) \ \textup{d} S(e),
    \end{align*}
    where $ f = u^q$, $ g = v^p $ and
    \begin{align*}
        L_{f,R}(e) & = \int_{\R^N} u(x) \int_0^{f(x)} \bigl( \eta_R(x) - \chi_R (I_{e,x,t}) \bigr)\ \dt \dx, \\
        L_{g,R}(e) & = \int_{\R^N} v(y) \int_0^{g(y)} \bigl( \eta_R (y) - \chi_R (J_{e, y, \tau} ) \bigr) \ \textup{d}\tau \dy,
    \end{align*}
    Here $x=z+se$, $y=w+s'e$, $I_{e,x,t}$ is the component of $\mathcal I_f(e,z,t)$ containing $s$, and $J_{e,y,\tau}$ is the component of $\mathcal I_g(e,w,\tau)$ containing $s'$. Then
    \begin{align*}
        D_R \leq \left( 1 + \frac{1}{2} \left| \frac{1}{q+1} - \frac{1}{p+1} \right| \right) C_R + 2 L_R.
    \end{align*}
\end{lemma}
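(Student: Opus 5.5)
The plan is to sum the pointwise interval inequality \eqref{interval-force-3} of Lemma~\ref{lem:interval-force} over all layers. Take weights $\gamma_I=\chi_R(I)$ and $\gamma_J=\chi_R(J)$, and integrate over $z,w\in e^\perp$, $t,\tau>0$ and the components $I\in\mathcal I_f(e,z,t)$, $J\in\mathcal I_g(e,w,\tau)$. By Lemma~\ref{almost-bounded} all components are bounded for a.e.\ $e$ and a.e.\ lines, so the midpoints and endpoint averages $k_I,k_J$ make sense. The $\min\{\gamma_I,\gamma_J\}\Delta_e$ term on the left then reproduces $D_{e,R}$ exactly. Everything else should be rewritten, up to the error terms $C_R$ and $L_R$, through the single quantity
\begin{align*}
    A_R=\int_{\R^N}\eta_R\,(fu+gv)\,\dx,
\end{align*}
and the critical hyperbola should make these rewritten terms cancel.

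\emph{Left side.} Using the layer-cake identity \eqref{repre-fgK}, the term $\int(\gamma_I+\gamma_J)k$ becomes
\begin{align*}
    \int_{\R^N}\int_{\R^N}\bigl(\eta_R(x)+\eta_R(y)\bigr)f(x)g(y)K(x-y)\,\dx\dy=A_R,
\end{align*}
up to the replacement of $\eta_R(x)$ by $\chi_R(I_{e,x,t})$, and likewise for $y$. The replacement error is nonnegative and, after the $\dy$ or $\dx$ integration against $K$, equals exactly $L_{f,R}(e)+L_{g,R}(e)$, because $\mathcal I(g)=u$ and $\mathcal I(f)=v$. Hence
\begin{align*}
    \text{LHS}\geq A_R-L_{f,R}(e)-L_{g,R}(e)+D_{e,R}.
\end{align*}

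\emph{Right side.} I will treat its three terms separately.
\begin{enumerate}
\item[(a)] The $\gamma_Ik_I$ term. After integrating in $w,\tau$, the inner integral is $\tfrac12\bigl(u(z+s_-e)+u(z+s_+e)\bigr)$, the average of $u$ at the two endpoints of $I$. Undo this with \eqref{interval-force-4} in the form
\begin{align*}
    |I|\,\tfrac12\bigl(u(z+s_-e)+u(z+s_+e)\bigr)=\int_I u\,\ds+\int_I (s-m_I)\,\partial_s u\,\ds .
\end{align*}
The piece $\int_I u$ reassembles to $\int\eta_R fu$, again up to an $L_{f,R}$-type error. The piece $\int_I(s-m_I)\partial_s u$ carries the constant weight $\chi_R(I)$. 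I expect to handle it by the same integration by parts, now performed in the $t$-variable: combined with the layer structure, this trades it for $-\frac1{q+1}\int\eta_R f u$. The mechanism is that $u\,\partial_s f$ integrated against the layers is $\partial_s$ of $f u$ minus $f\,\partial_s u$, so the piece becomes Pohozaev-like with the weight $\eta_R$.
\item[(b)] The $\gamma_Jk_J$ term is treated symmetrically and gives $\int\eta_R gv\bigl(1-\frac1{p+1}\bigr)$, up to the symmetric errors.
\item[(c)] The midpoint term
\begin{align*}
    d\min\{\gamma_I,\gamma_J\}\frac{(s-s')^2}{\rho^2+(s-s')^2}\,k .
\end{align*}
After averaging over $e\in\mathbb S^{N-1}$, the factor $\frac{(s-s')^2}{\rho^2+(s-s')^2}=\bigl((x-y)\cdot e\bigr)^2/|x-y|^2$ averages to $1/N$. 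The term is therefore at most $\frac dN\int\int\min\{\eta_R(x),\eta_R(y)\}fgK$, again up to $L$ errors. Note that $\frac dN=\frac1{q+1}+\frac1{p+1}$ by \eqref{critical-hyperbola}.
\end{enumerate}

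\emph{Cancellation and error bookkeeping.} The $|\gamma_I-\gamma_J|k$ term is bounded by
\begin{align*}
    |\eta_R(x)-\eta_R(y)|+\bigl(\eta_R(x)-\chi_R(I)\bigr)+\bigl(\eta_R(y)-\chi_R(J)\bigr),
\end{align*}
integrated against $fgK$. This contributes $C_R$ plus one more copy of the $L$ terms; the two copies together account for the factor $2L_R$. The main-term accounting then closes as follows. The left side gives $A_R$. From the right side, (a) and (b) contribute $A_R-\frac1{q+1}\int\eta_R fu-\frac1{p+1}\int\eta_R gv$, and (c) contributes $\bigl(\frac1{q+1}+\frac1{p+1}\bigr)\int\int\min\{\eta_R(x),\eta_R(y)\}fgK$. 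Both $\int\eta_Rfu$ and $\int\eta_Rgv$ equal $\int\int\eta_R fgK$, up to replacing $\eta_R(x)$ by $\eta_R(y)$. So the main terms cancel exactly, and only discrepancies of the form $|\eta_R(x)-\eta_R(y)|$ survive. Their coefficients are $\frac12\bigl(\frac1{q+1}+\frac1{p+1}\bigr)$-type pieces from writing $\min=\frac12(\eta_R(x)+\eta_R(y))-\frac12|\eta_R(x)-\eta_R(y)|$. Combined with the unsymmetric weights $\frac1{q+1}$ versus $\frac1{p+1}$, this should yield the extra coefficient $\frac12\bigl|\frac1{q+1}-\frac1{p+1}\bigr|$ on $C_R$.

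The main obstacle is step (a): converting the endpoint-averaged term with the constant per-interval weight $\chi_R(I)$ into $\bigl(1-\frac1{q+1}\bigr)\int\eta_R fu$ rigorously. This requires a one-dimensional coarea/Pohozaev identity along each line, $\int_0^\infty\sum_I|I|\,u(\partial I)\,\dt$ versus $\int f u$ and $\int u^{q+1}$, with all weight mismatches absorbed exactly into $L_{f,R}$. I would first verify this on a single line with $\eta\equiv1$ and compactly supported data. For that case the needed identity is
\begin{align*}
    \int_0^\infty\sum_I |I|\,\frac{u(s_-)+u(s_+)}{2}\,\dt=\int u f\,\ds-\frac1{q+1}\int s\,\partial_s(u^{q+1})\,\ds+(\text{cross term in }f\,\partial_s u),
\end{align*}
and I expect the cross term to combine with its $g$-counterpart through $v=\mathcal I f$ into the symmetric midpoint expression. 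Only after checking this would I insert the weights.
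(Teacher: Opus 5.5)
Your architecture matches the paper's: sum \eqref{interval-force-3} with $\gamma_I=\chi_R(I)$, $\gamma_J=\chi_R(J)$, read off $D_{e,R}$, average $\Theta_e$ to $1/N$, let \eqref{critical-hyperbola} cancel the main terms, and bound the residual $E_{f,R}-E_{g,R}$ by $C_R$. The gap is in step (a), and symmetrically (b), which you flag as the main obstacle and leave unresolved; the Pohozaev-type route you propose for it is the wrong tool. The missing observation is that $I=(s_-,s_+)$ is a connected component of $\{s:u(z+se)^q>t\}$ and $u$ is continuous, so
\begin{align*}
u(z+s_-e)=u(z+s_+e)=t^{1/q}.
\end{align*}
Hence, after the $w,\tau,J$ integration, $\gamma_Ik_I$ contributes exactly $\chi_R(I)\,|I|\,t^{1/q}$, a constant on each layer. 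The layer-cake formula then gives directly
\begin{align*}
\int_{e^\perp}\int_0^\infty\sum_{I\in\mathcal I_f(e,z,t)}\chi_R(I)\,|I|\,t^{1/q}\,\dt\,\textup{d}z=\int_{\R^N}\int_0^{f(x)}\chi_R(I_{e,x,t})\,t^{1/q}\,\dt\,\dx\leq\frac{q}{q+1}\int_{\R^N}\eta_R\,u^{q+1}\,\dx,
\end{align*}
since $\chi_R(I_{e,x,t})\le\eta_R(x)$ and $\int_0^{u^q}t^{1/q}\,\dt=\frac{q}{q+1}u^{q+1}$. The paper does the same computation after moving $\gamma_Ik_I$ to the left, which yields $\int_{\R^N}\int_0^{f(x)}\chi_R(I_{e,x,t})(u(x)-t^{1/q})\,\dt\,\dx\geq\frac1{q+1}E_{f,R}-L_{f,R}(e)$. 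No integration by parts is needed, and there is no interaction with the $g$-side through $v=\mathcal I f$.

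As written, your route does not close. On a single line the correct identity is
\begin{align*}
\int_0^\infty\sum_I|I|\,\frac{u(s_-)+u(s_+)}{2}\,\dt=\int uf\,\ds+\frac1{q+1}\int s\,\partial_s(u^{q+1})\,\ds=\frac{q}{q+1}\int u^{q+1}\,\ds.
\end{align*}
It has a plus sign and no cross term, and even this needs the same endpoint equality to kill the terms $m_I\bigl(u(z+s_+e)-u(z+s_-e)\bigr)$. With weights the situation is worse. If you integrate by parts in $s$ against $\eta_R$, you pick up $\frac1{q+1}\int|s\,\partial_s\eta_R|\,u^{q+1}$. This is of the order of the energy in $B_{2R}\setminus B_R$, is not controlled by $C_R$ or $L_R$, and would destroy the sublinear bound of Proposition \ref{prop:defect-energy}. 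If instead you keep the constants $\chi_R(I)$ and convert the two pieces of $\chi_R(I)\int_I(s-m_I)\partial_su\,\ds$ separately, you generate errors such as $\int\int_0^{f}(\eta_R-\chi_R)\,|s-m_I|\,|\partial_su|$, which are not of the form $L_{f,R}$.

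Once the endpoint observation is in place, the rest of your bookkeeping works. Note, though, that the errors you attach to (a) and (c) have a favorable sign and should be dropped, using $\chi_R(I)\le\eta_R(x)$ and $\min\{\gamma_I,\gamma_J\}\le\min\{\eta_R(x),\eta_R(y)\}$. Then only the two copies of $L$ survive, one from the left side and one from $|\gamma_I-\gamma_J|$. Your identity $\min=\frac12(\eta_R(x)+\eta_R(y))-\frac12|\eta_R(x)-\eta_R(y)|$ then gives the coefficient $1-\frac{N-2}{2N}+\frac12\bigl|\frac1{q+1}-\frac1{p+1}\bigr|$ on $C_R$. This is slightly better than the paper's, which uses the cruder bound $\min\le\frac12(\eta_R(x)+\eta_R(y))$.
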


\begin{proof}
    Fix $R>0$ and fix $e \in \mathcal{E} $, which is a full-measure subset of $ \mathbb{S}^{N-1} $ defined in Lemma~\ref{almost-bounded}. Set
    \begin{align*}
        E_{f,R}=\int_{\R^N}\eta_Ru^{q+1}\ \dx,\ \
        E_{g,R}=\int_{\R^N}\eta_Rv^{p+1}\ \dx.
    \end{align*}
    Then $ E_{f,R} $ and $E_{g,R}$ are finite. From Proposition \ref{lem-int-repre} and $ 0 \leq \chi_R(I) \leq \eta_R(x) $ for $ x \in z+Ie$, we get
    \begin{align*}
        C_R\leq E_{f,R}+E_{g,R},\ \
        0\leq L_{f,R}(e)\leq E_{f,R},\ \
        0\leq L_{g,R}(e)\leq E_{g,R}.
    \end{align*}
   
    For bounded components $I\in\mathcal I_f(e,z,t)$ and $J\in\mathcal I_g(e,w,\tau)$, apply Lemma \ref{lem:interval-force} with $\gamma_I=\chi_R(I)$ and $\gamma_J = \chi_R(J) $ to obtain
    \begin{align}\label{energy-lem-1}
        & \gamma_I \int_{I\times J} (k-k_I)\ \ds \ds' + \gamma_J \int_{I \times J} (k-k_J)\ \ds \ds' + \min\{ \gamma_I, \gamma_J \} \Delta_e( I, J) \nonumber \\
        & \quad \leq d \min\{ \gamma_I, \gamma_J \} \int_{I \times J} \Theta_e(x,y) k(s,s') \ \ds \ds' + |\gamma_I-\gamma_J| \int_{I \times J} k(s,s')\ \ds \ds',
    \end{align}
    where $x=z+se$, $y=w+s'e$ and
    \begin{align*}
        \Theta_e(x,y)=\frac{((x-y)\cdot e)^2}{|x-y|^2}.
    \end{align*}

    We first consider the term involving $\gamma_I(k-k_I)$. Whenever $\chi_R(I)>0$, the interval $I=(s_-,s_+)$ is bounded. Since $I$ is a component of $ \{s:f(z+se)>t\} $ and $f=u^q$, its endpoints satisfy
    \begin{align*}
        u(z+s_-e)=u(z+s_+e)=t^{1/q}.
    \end{align*}
    
    By the representation of $g$ and $u=\mathcal I g$, we have
    \begin{align*}
        &\quad \int_{e^\perp}\int_0^\infty \sum_{J\in\mathcal{I}_g(e,w,\tau)} \int_J(k(s,s')-k_I(s'))\ \ds' \textup{d} \tau \textup{d} w \\
         & = \int_{e^{\perp}} \int_{\R} (k(s , s') - k_{I}(s')) \int_0^{\infty} \sum_{J } \chi_{J}(s') \ \textup{d}\tau \ds' \textup{d}w \\
        & = \int_{e^{\perp}} \int_{\R} (k(s , s') - k_{I}(s')) g(w + s'e) \ \ds' \textup{d}w  \\
        & = u(z+se)-\frac{u(z+s_-e)+u(z+s_+e)}{2}\\
        &= u(z+se)-t^{1/q}.
    \end{align*}
    Consequently, for the term involving $\gamma_I (k-k_I)$, we obtain
    \begin{align}\label{energy-lem-2}
        & \ \  \int_{e^{\perp}}\int_{e^{\perp}} \int_{0}^{\infty}\int_{0}^{\infty} \sum_{I \in \mathcal{I}_f (e, z, t)} \sum_{J \in \mathcal{I}_g (e, w, \tau)} \int_{I \times J} \chi_R(I) (k(s,s') - k_I (s')) \ \ds' \ds \textup{d}\tau \dt \textup{d}w \textup{d}z \nonumber \\
        & = \int_{e^{\perp}} \int_{0}^{\infty} \sum_{I \in \mathcal{I}_f (e, z, t)} \int_I \chi_R(I) (u(z + se) - t^{1/q}) \ \ds \dt \textup{d}z \nonumber \\
        & = \int_{e^{\perp}} \int_{\R} \int_0^{\infty} \sum_{I \in \mathcal{I}_f(e,z,t) } \chi_R(I) \chi_I (s) (u(z + se) - t^{1/q}) \ \dt \ds \textup{d}z \nonumber \\
        &  = \int_{\mathbb R^N}\int_0^{f(x)} \chi_R (I_{e,x,t}) (u(x)-t^{1/q})\ \dt \dx \\
        & = \int_{\R^N} \eta_R(x) \int_{0}^{f(x)} (u(x) - t^{1/q}) \ \dt \dx - \int_{\R^N} \int_{0}^{f(x)} (\eta_R(x) - \chi_R(I_{e,x,t}) ) (u(x) - t^{1/q}) \ \dt \dx \nonumber \\
        & = \frac{1}{q+1} \int_{\R^N} \eta_R (x) u^{q+1}(x) \ \dx - L_{f,R}(e) + \int_{\R^N} \int_{0}^{f(x)} (\eta_R(x) - \chi_R(I_{e,x,t}) ) t^{1/q} \ \dt \dx \nonumber \\
        & \geq \frac{1}{q+1} E_{f,R} - L_{f,R}(e), \nonumber
    \end{align}
    The last inequality uses $\eta_R(x)-\chi_R(I_{e,x,t})\geq0$. Similarly, we obtain
    \begin{align}\label{energy-lem-3}
         \int_{e^{\perp}}\int_{e^{\perp}} \int_{0}^{\infty}\int_{0}^{\infty} \sum_{I \in \mathcal{I}_f (e, z, t)} & \sum_{J \in \mathcal{I}_g (e, w, \tau)} \int_{I \times J} \chi_R(J) (k(s,s') - k_J (s))  \ \ds' \ds \textup{d}\tau \dt \textup{d}w \textup{d}z \nonumber \\
         & \geq \frac{1}{p+1}E_{g,R}-L_{g,R}(e).
    \end{align}
    Summing the third term on the left-hand side of \eqref{energy-lem-1} over $I,J$ and integrating in $z,w,t,\tau$ gives $D_{e,R}$.

    We next consider the terms on the right-hand side of \eqref{energy-lem-1}. Since
    \begin{align*}
        \min\{\gamma_I,\gamma_J\} \leq\frac{ \eta_R(x) + \eta_R(y) }{2},
    \end{align*}
    we have
    \begin{align}\label{energy-lem-4}
         & \quad d \int_{e^{\perp}} \int_{e^{\perp}} \int_{0}^{\infty}\int_{0}^{\infty} \sum_{I \in \mathcal{I}_f (e, z, t)} \sum_{J \in \mathcal{I}_g (e, w, \tau)} \int_{I \times J} \min\{ \gamma_I, \gamma_J \} \Theta_e (x,y) k(s,s')  \ \ds' \ds \textup{d}\tau \dt \textup{d}w \textup{d}z \nonumber \\
         & \leq  \frac{d}{2}  \int_{e^{\perp}} \int_{e^{\perp}} \int_{0}^{\infty} \int_{0}^{\infty} \sum_{I} \sum_{J} \int_{I \times J} (\eta_R(x) + \eta_R(y) ) \Theta_e (x,y) k(s,s')  \ \ds' \ds \textup{d}\tau \dt \textup{d} w \textup{d}z  \\
         & = \frac{d}{2}  \int_{e^{\perp}} \int_{e^{\perp}} \int_{\R} \int_{\R}  \int_{0}^{\infty} \int_0^{\infty} \sum_{I} \sum_{J } \chi_I(s) \chi_J(s') (\eta_R(x) + \eta_R(y) ) \Theta_e (x,y) k(s,s')  \ \textup{d}\tau \dt \ds' \ds  \textup{d} w \textup{d}z \nonumber  \\
        & =   \frac{d}{2}\int_{\mathbb R^N}\int_{\mathbb R^N} (\eta_R(x)+\eta_R(y))\Theta_e(x,y) f(x)g(y) K(x-y) \ \dx \dy. \nonumber
    \end{align}

    For the last term on the right-hand side of \eqref{energy-lem-1}, we use
    \begin{align*}
        |\gamma_I-\gamma_J| & \leq |\eta_R(x)-\eta_R(y)| +(\eta_R(x) - \chi_R(I)) + (\eta_R(y)-\chi_R(J)).
    \end{align*}
    As in \eqref{energy-lem-2} and \eqref{energy-lem-4}, we obtain
    \begin{align}\label{energy-lem-5}
        & \quad \int_{e^{\perp}} \int_{e^{\perp}} \int_{0}^{\infty}\int_{0}^{\infty} \sum_{I \in \mathcal{I}_f (e, z, t)} \sum_{J \in \mathcal{I}_g (e, w, \tau)} \int_{I \times J} |\gamma_I - \gamma_J| k(s,s')  \ \ds' \ds \textup{d}\tau \dt \textup{d}w \textup{d}z \nonumber \\
        & \leq \int_{\R^N} \int_{\R^N} |\eta_R (x) - \eta_R(y) | f(x) g(y) K(x-y) \ \dx \dy + L_{f,R}(e) + L_{g,R}(e) \\
        & = C_R + L_{f,R}(e) + L_{g,R}(e). \nonumber
    \end{align}
    From \eqref{energy-lem-1}-\eqref{energy-lem-5}, we obtain
    \begin{align}\label{energy-lem-6}
        \frac{1}{q+1}E_{f,R} + \frac{1}{p+1} E_{g,R} + D_{e,R} & \leq \frac{d}{2} \int_{\mathbb R^N} \int_{\mathbb R^N} (\eta_R(x) + \eta_R(y) ) \Theta_e(x,y) f(x) g(y) K(x-y) \ \dx \dy \nonumber \\
        & \quad + C_R + 2 L_{f,R}(e) + 2 L_{g,R}(e).
    \end{align}
    Now, we take average over $ e \in \mathbb S^{N-1}$ on both sides of \eqref{energy-lem-6}. For $ x \ne y$, direct computation gives
    \begin{align*}
        \frac{1}{ |\mathbb{S}^{N-1}| }\int_{\mathbb{S}^{N-1}}\Theta_e(x,y)\ \text{d} S(e)=\frac{1}{N}.
    \end{align*}
    From Proposition~\ref{lem-int-repre}, we get
    \begin{align*}
        \int_{\mathbb R^N} \int_{\mathbb R^N} \eta_R(x) f(x) g(y) K(x-y) \ \dx \dy = \int_{\R^N} \eta_R(x) f(x) u(x) \ \dx  =E_{f,R}, \\
        \int_{\mathbb R^N} \int_{\mathbb R^N} \eta_R(y) f(x) g(y) K(x-y) \ \dx \dy =  \int_{\R^N} \eta_R(y) g(y) v(y) \ \dy  = E_{g,R}.
    \end{align*}
    Thus we obtain from \eqref{energy-lem-6} that
    \begin{align*}
        \frac{1}{q+1} E_{f,R} + \frac{1}{p+1} E_{g,R} + D_R \leq \frac{N-2}{2N}(E_{f,R}+E_{g,R}) + C_R + 2 L_R.
    \end{align*}
    From \eqref{critical-hyperbola}, we deduce that
    \begin{align*}
        D_R &\leq \frac{1}{2} \left(\frac{1}{p+1}-\frac{1}{q+1}\right) (E_{f,R}-E_{g,R})+C_R+2L_R.
    \end{align*}
    On the other hand,
    \begin{align*}
        E_{f,R}-E_{g,R} = \int_{\mathbb R^N} \int_{\mathbb R^N} (\eta_R(x)-\eta_R(y)) f(x) g(y) K(x-y) \ \dx \dy ,
    \end{align*}
    and therefore $ |E_{f,R}-E_{g,R}| \leq C_R $. Consequently, the desired conclusion holds.
\end{proof}

Next, we use Lemma \ref{E-R-bound} to prove Proposition \ref{prop:defect-energy}.

\begin{proof}[Proof of Proposition \ref{prop:defect-energy}]
    We first estimate $L_R$. Recall that $\chi_R(I)$ is the infimum of $\eta_R$ in $I$. Since $\eta_R$ is Lipschitz with constant $C/R$, we have
    \begin{align}\label{energy-prop-0}
        0 \leq \eta_R(x) - \chi_R(I_{e,x,t}) \leq \frac C R|I_{e,x,t}|
    \end{align}
    whenever $I_{e,x,t}$ is bounded. The inequality also holds when the interval is unbounded. Therefore, to control $L_R$, we first estimate the average length of the intervals in $\mathcal{I}_f(e,z,t) $.

    Fix $x\in\R^N$, $0<t<f(x)$, and $\ell>0$. If $|I_{e,x,t}|>\ell$, then at least one of the two intervals of length $\ell/2$ starting from $x$ in the directions $e$ and $-e$ is contained in $\{ f > t \}$. Let $F\subset\mathbb S^{N-1}$ be the set of directions $e$ such that $x+re\in\{f>t\}$ for every $0<r<\ell/2$. Then $ |F| \leq |\mathbb{S}^{N-1}|$ and
    \begin{align}\label{energy-prop-1}
        \int_{B_{\ell/2}(x)}f(y)\,dy & \geq \int_F \int_0^{\ell/2} f(x+re)r^{N-1}\ \textup{d}r \textup{d} S(e) \geq t \int_F\int_0^{\ell/2}r^{N-1}\ \textup{d} r \textup{d} S(e) \nonumber \\
        &=\frac{t}{N} \left(\frac{\ell}{2}\right)^N |F|.
    \end{align}
    On the other hand, from Proposition~\ref{lem-int-repre}, we get
    \begin{align*}
        \int_{B_{\ell/2}(x)}f(y)\,dy\leq C\ell^\alpha,
    \end{align*}
    which combined with \eqref{energy-prop-1} gives
    $$
        |F| \leq \frac{C}{t\ell^{N-\alpha}}.
    $$
    Since the set of directions along which $|I_{e,x,t}|>\ell$ holds is contained in $F \cup (-F)$, we obtain
    \begin{align}\label{energy-prop-add}
        |\{e\in\mathbb S^{N-1}:|I_{e,x,t}|>\ell\}|\leq 2|F|  \leq \min\left\{ |\mathbb{S}^{N-1}|,\frac{C}{t\ell^{N-\alpha}}\right\}.
    \end{align}
    We now integrate \eqref{energy-prop-add} with respect to $\ell$. Since $N-\alpha=\beta+2 > 1$, we obtain
    \begin{align}\label{energy-prop-2}
        \frac{1}{|\mathbb S^{N-1}|} \int_{\mathbb S^{N-1}} |I_{e,x,t}| \ \text{d}S(e) & = \frac{1}{ |\mathbb S^{N-1}| } \int_0^\infty | \{ e: |I_{e,x,t}| > \ell\} | \ \text{d}\ell  \\
        & \leq \frac{1}{{ |\mathbb S^{N-1}| }} \int_0^\infty\min\left\{|\mathbb{S}^{N-1}| ,\frac{C}{t\ell^{N-\alpha}}\right\}\ \text{d} \ell \leq Ct^{-1/(N-\alpha)}. \nonumber
    \end{align}
    In particular, $I_{e,x,t}$ is bounded for almost every direction $e$. For $g$, we similarly obtain
    \begin{align}\label{energy-prop-3}
        \frac{1}{ | \mathbb S^{N-1}| } \int_{ \mathbb S^{N-1}} |J_{e,y,\tau}| \ \textup{d} S(e) \leq C \tau^{-1/(N-\beta)}.
    \end{align}
    Hence, using \eqref{energy-prop-0}, \eqref{energy-prop-2} and \eqref{energy-prop-3}, we obtain
    \begin{align}\label{energy-prop-4}
        L_R & \leq \frac C R \int_{B_{2R}} u(x) \int_0^{f(x)} \left( \frac{1}{|\mathbb S^{N-1}|} \int_{\mathbb S^{N-1}} |I_{e,x,t}| \ \text{d}S(e) \right) \dt \dx \nonumber \\
        & \quad + \frac{C}{R} \int_{B_{2R}} v(y) \int_0^{g(y)} \left( \frac{1}{|\mathbb S^{N-1}|} \int_{ \mathbb S^{N-1}} |J_{e, y, \tau} | \ \textup{d} S(e) \right) \textup{d}\tau \dy \nonumber \\
        & \leq \frac{C}{R} \int_{B_{2R}} \left( u(x) \int_0^{u(x)^q} t^{-1/(N-\alpha)} \ \dt + v(x) \int_0^{v(x)^p} \tau^{-1/(N-\beta)} \ \textup{d} \tau \right) \dx \nonumber \\
        & = \frac{C}{R} \int_{B_{2R}} \left(u^{(N-1)/\alpha} + v^{(N-1)/\beta}\right) \ \dx  \\
        & \leq  C \frac{|B_{2R}|^{1/N}}{R} \left( \int_{B_{2R}} u^{q+1} \right)^{1-1/N} +  C \frac{|B_{2R}|^{1/N}}{R} \left( \int_{B_{2R}} v^{p+1} \right)^{1-1/N} \nonumber \\
        & \leq CE(2R)^{1-1/N}, \nonumber
    \end{align}
    where we have used H\"older's inequality and $ \alpha(q+1) = N$, $ \beta(p+1) = N$ in the last two lines.

    Next, we estimate $C_R$. Recall that
    \begin{align*}
        C_R = \int_{\R^N}\int_{\R^N} |\eta_R(x)-\eta_R(y)|f(x)g(y)K(x-y)\ \dx \dy.
    \end{align*}
    Fix $0<\sigma<\min\{1,\alpha,\beta\}$. We first show that
    \begin{align}\label{energy-prop-5}
        \int_{\R^N}|x-y|^\sigma K(x-y)g(y)\ \dy \leq C u(x)^{1-\sigma/\alpha}.
    \end{align}
    For any $\rho>0$, we split the integral into the regions $|x-y|<\rho$ and $|x-y|\geq\rho$. From $u= \mathcal{I}g = \mathcal{I} v^p$ and Proposition \ref{lem-int-repre}, we get
    \begin{align}\label{energy-prop-add2}
        \int_{\R^N} |x-y|^\sigma K(x-y) g(y) \ \dy & =
        \int_{|x-y|<\rho} |x-y|^\sigma K(x-y)g(y) \ \dy \nonumber \\
        &\quad + \int_{|x-y|\geq\rho} |x-y|^\sigma K(x-y) g(y)\ \dy  \nonumber \\
        & \leq \rho^\sigma u(x) + C\sum_{j=0}^\infty (2^j\rho)^{\sigma-(N-2)} \int_{B_{2^{j+1}\rho}(x)}g(y)\ \dy \\
        & \leq \rho^{\sigma} u(x) + C \sum_{j=0}^\infty(2^j\rho)^{\sigma-(N-2)+\beta} \nonumber  \\
        & \leq  \rho^{\sigma} u(x) + C \rho^{ \sigma - \alpha}. \nonumber
    \end{align}
    The last series converges because $ \sigma < \alpha = N - 2 - \beta $. Choosing $\rho=u(x)^{-1/\alpha}$ in \eqref{energy-prop-add2}, we conclude that \eqref{energy-prop-5} holds. Similarly, using $v= \mathcal{I} f$ and Proposition \ref{lem-int-repre}, we obtain
    \begin{align}\label{energy-prop-6}
        \int_{\R^N}|x-y|^\sigma K(x-y)f(x)\,dx \leq Cv(y)^{1-\sigma/\beta}.
    \end{align}
    We now return to $C_R$. Since $\eta_R$ is smooth and $0<\sigma<1$, we have
    \begin{align*}
        |\eta_R(x)-\eta_R(y)| \leq CR^{-\sigma}|x-y|^\sigma.
    \end{align*}
    Moreover, this difference vanishes whenever both $x$ and $y$ lie outside $B_{2R}$. By \eqref{energy-prop-5} and \eqref{energy-prop-6}, we obtain
    \begin{align}\label{energy-prop-7}
        C_R & \leq C R^{-\sigma}\int_{B_{2R}} f(x) \left(\int_{\R^N} |x-y|^\sigma K(x-y)g(y)\ \dy \right) \dx \nonumber \\
        & \quad + C R^{-\sigma} \int_{B_{2R}} g(y) \left( \int_{\R^N} |x-y|^\sigma K(x-y)f(x)\ \dx \right) \dy  \nonumber \\
        &\leq CR^{-\sigma}\int_{B_{2R}} \left( u^{q+1-\sigma/\alpha} + v^{p+1-\sigma/\beta} \right)dx \\
        & \leq C R^{-\sigma} |B_{2R}|^{\sigma/N} \left( \int_{B_{2R}} u^{N/\alpha} \right)^{1-\sigma/N} + C R^{-\sigma} |B_{2R}|^{\sigma/N} \left( \int_{B_{2R}} v^{N/\beta} \right)^{1-\sigma/N} \nonumber \\
        & \leq C  E(2R)^{1-\sigma/N}. \nonumber
    \end{align}
    Hence, from Lemma \ref{E-R-bound}, \eqref{energy-prop-4}, \eqref{energy-prop-7}, we have
    \begin{align*}
        D_R \leq CC_R+CL_R \leq CE(2R)^{1-\sigma/N}+CE(2R)^{1-1/N},
    \end{align*}
    as required.
\end{proof}

The preceding proposition provides an estimate for $D_R$ in terms of $E(2R)$. However, this estimate alone does not imply the vanishing of the defect $D(B_R)$. The following theorem shows that, for a bounded positive solution of \eqref{Main1}, $E(R)=o(R)$ implies $D(B_R)=0$; hence the solution is a translation and dilation of the ground state.

\begin{thm}\label{thm:sublinear-energy}
    Let $(u,v)$ be a bounded positive entire solution of \eqref{Main1}. If
    \begin{align*}
        E(R) = o (R), \ \  \text{as } \ \ R \to \infty,
    \end{align*}
    then $u$ and $v$ are radial about a common point. Consequently, $(u,v) \in \mathcal G$.
\end{thm}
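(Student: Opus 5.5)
The plan is to show that $D(B_r)=0$ for every fixed $r>0$ and then apply Theorem~\ref{thm:zero-defect}. By the proof of Lemma~\ref{lem:defect-properties}, $D(B_r)\le D_R$ for every $R\ge r$. It therefore suffices to prove $\liminf_{R\to\infty}D_R=0$ under the hypotheses that $u,v$ are bounded and $E(R)=o(R)$. Proposition~\ref{prop:defect-energy} alone does not give this, because it only bounds $D_R$ by sublinear powers of $E(2R)$, which may still tend to infinity. I will therefore return to Lemma~\ref{E-R-bound}, $D_R\le C\,C_R+2L_R$, and re-estimate $C_R$ and $L_R$ so that the bounds carry an explicit factor $R^{-1}$ paired with $E$. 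The goal is a bound of the form
\begin{align*}
    D_R\le C\Bigl(\frac{E(4R)}{R}\Bigr)^{\theta}+C\Bigl(\frac{E(4R)}{R}\Bigr)\quad\text{for some }\theta\in(0,1],
\end{align*}
possibly plus tail terms that tend to zero.

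For $C_R$ I will use the full Lipschitz bound $|\eta_R(x)-\eta_R(y)|\le CR^{-1}|x-y|$ on the near region $|x-y|\le R$. There the boundedness of $g$ replaces the critical estimate \eqref{int-uv-bound}: with $W(x)=\int_{|x-y|<R}|x-y|K(x-y)g(y)\,\dy$, I split this integral at a radius $\rho$. The part $|x-y|<\rho$ is at most $C\rho^3\|g\|_\infty$. The part $\rho\le|x-y|<R$ is bounded by $\rho\,u(x)$ plus dyadic shells, on which $g$ is controlled through $u=\mathcal I g$ rather than through \eqref{int-uv-bound}. The aim is $\int_{B_{2R}}fW\lesssim \int_{B_{4R}}(u^{q+1}+v^{p+1})$ up to constants depending on $\|u\|_\infty,\|v\|_\infty$. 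On the far region $|x-y|\ge R$ I use $|\eta_R(x)-\eta_R(y)|\le 1$ and the decay of $K$. Combining the three contributions $\int_{B_{2R}}f\,\mathcal I(g\chi_{|x-\cdot|\ge R})$ via \eqref{int-uv-bound} on dyadic annuli should give a bound of order $R^{-1}E(4R)$ or a power of it, using $\alpha+\beta=N-2$.

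For $L_R$ I will replace \eqref{energy-prop-0} by $\eta_R-\chi_R\le\min\{1,CR^{-1}|I_{e,x,t}|\}$ and use the direction-averaged length bound \eqref{energy-prop-2}. The trouble is the region where $t$ is small: there the averaged length $t^{-1/(N-\alpha)}$ is large and produces the bad power $u^{(N-1)/\alpha}$. Boundedness does not help there. Instead I will cap the length contribution at $1$, i.e.\ bound $\int_0^{f(x)}\min\{1,CR^{-1}t^{-1/(N-\alpha)}\}\,\dt$ directly. This should give
\begin{align*}
    L_R\lesssim R^{-1}\int_{B_{2R}}u^{(N-1)/\alpha}\,\chi_{\{u\gtrsim R^{-\alpha}\}}+\int_{B_{2R}\cap\{u\lesssim R^{-\alpha}\}}u^{q+1},
\end{align*}
and I will then use $E(4R)=o(R)$ to make both terms small.

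The step I expect to be the main obstacle is this small-level part of $L_R$, together with the analogous far-field part of $C_R$: I must make sure no term of size $R^{\alpha}\cdot R^{-\alpha}$ survives with only $O(1)$ control. If the direct estimate falls short, my fallback is to exploit that $D(B_r)$ is monotone and scale invariant, by Lemma~\ref{lem:defect-properties}(1), and to average the bound over $R\in[R_0,2R_0]$. The hypothesis $E(R)=o(R)$ then needs to hold only along a sequence, and averaging in $R$ converts the $R^{-1}|I|$ gains into integrals controlled by $R_0^{-1}E(4R_0)\to0$. Once $D(B_r)=0$ is established for all $r$, Theorem~\ref{thm:zero-defect} gives radial symmetry about a common point, and Lemma~\ref{radial-uniqueness} gives $(u,v)\in\mathcal G$.
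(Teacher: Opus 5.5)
Your reduction to $\liminf_{R\to\infty}D_R=0$ is fine, but the route through Lemma~\ref{E-R-bound} cannot close. This is structural, not a matter of sharper estimates. $C_R$, $L_R$, $E$ and \eqref{int-uv-bound} are all invariant under the critical scaling. The information you feed in is $u=\mathcal I g$, $v=\mathcal I f$, \eqref{int-uv-bound}, the averaged length bound \eqref{energy-prop-2}, and $E(R)=o(R)$. On $|x|\gtrsim1$ all of it is shared by the critically homogeneous pair $u=A|x|^{-\alpha}$, $v=B|x|^{-\beta}$ with $\alpha\beta A=B^p$, $\alpha\beta B=A^q$. This pair is not a counterexample to the theorem. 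But boundedness only acts at scales of order $\|m\|_{L^\infty}^{-1}$, so it says nothing on the annulus $|x|\sim R$. For this pair $\int_{B_R\setminus B_1}u^{q+1}\,\dx\sim\log R=o(R)$. Yet the far part of $C_R$, where $|\eta_R(x)-\eta_R(y)|=1$, satisfies
\begin{align*}
\int_{B_R\setminus B_1}f(x)\int_{|y|>2R}K(x-y)g(y)\,\dy\,\dx\gtrsim R^{\alpha}\cdot R^{-\alpha}\sim 1.
\end{align*}
Both terms in your proposed bound for $L_R$ are also of order one. The low-level term is $\int_{B_{2R}\setminus B_{cR}}|x|^{-N}\,\dx\sim1$, and the other is $R^{-1}\int_{B_{cR}\setminus B_1}|x|^{1-N}\,\dx\sim1$. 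So the $R^{\alpha}\cdot R^{-\alpha}$ terms you worry about do survive, at every scale, and averaging over $R\in[R_0,2R_0]$ does not remove them. Since this pair is radial, its defect is zero; the inequality $D_R\le C\,C_R+2L_R$ is simply too lossy. There the leading energies cancel only after averaging over directions against a radial cutoff, which leaves a commutator and an interval-truncation error with no gain. Your intermediate aim $\int_{B_{2R}}fW\lesssim E(4R)$ is also false for the ground state itself when $2<(N-2)p<3$: then $W(0)\gtrsim R^{3-(N-2)p}\to\infty$ while $E$ stays bounded.

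The missing idea is to drop the energy identity of Lemma~\ref{E-R-bound} and use an exact cancellation instead. For a bounded component $I=(s_-,s_+)$ of $\{f(z+\cdot\,e)>t\}$, the endpoints satisfy $u(x_+)=u(x_-)=t^{1/q}$. Hence $\int_{e^\perp}\int_0^\infty\sum_J T_e(I,J)\,\textup{d}\tau\,\textup{d}w=u(x_+)-u(x_-)=0$, and symmetrically for each $J$ with $v$. The paper multiplies these identities by $\zeta_R(z)\vartheta_M(m_I)$ and $\zeta_R(w)\vartheta_M(m_J)$. Here $\zeta_R$ is a cutoff in the transverse variable only, so it is constant along each line and there is no analogue of $\eta_R-\chi_R$, i.e.\ no $L_R$. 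The function $\vartheta_M$ truncates midpoints at level $M$. Combining the identities gives $0\le A_{e,R}=B_{e,R}$, where $A_{e,R}$ is a truncated defect and $B_{e,R}$ contains only $\zeta_R(z)-\zeta_R(w)$. The bound $|z-w|\,|\partial_s k|\le\frac{N-2}{2}K$ absorbs the transverse distance, so the Lipschitz factor becomes a genuine gain: $B_{e,R}\le\frac{CM}{R}\int_{T_{e,2R}}(u^{q+1}+v^{p+1})\,\dx$. Its direction average is at most $CM\sup_{r\ge R}E(r)/r\to0$. The only length left is the fixed $M$, whereas in $C_R$ and $L_R$ the relevant lengths $|x-y|$ and $|I|$ range up to $R$. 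Boundedness enters only to make $A_{e,R}$ and $B_{e,R}$ absolutely convergent, via $\|\nabla u\|_{L^\infty}$, $\|\nabla v\|_{L^\infty}$ and the coarea formula. Monotone convergence in $R$, then $M\to\infty$, gives $\Delta_e(I,J)=0$ almost everywhere, hence $D(B_L)=0$ for all $L$.
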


\begin{proof}
    From Theorem \ref{thm:zero-defect}, we only need to prove that
    \begin{align}\label{E-oR}
        D(B_L) = 0, \ \ \text{for all} \ \ L > 0.
    \end{align}
    In order to prove \eqref{E-oR}, we show that $\Delta_e(I,J) = -(m_I - m_J) T_e(I,J) = 0 $ for almost every interval pair $(I,J)$ and $e \in \mathbb{S}^{N-1}$.

    By the proof of Lemma~\ref{almost-bounded}, there is a full-measure set $\mathcal E\subset\mathbb S^{N-1}$ such that, for every $e\in\mathcal E$,
    \begin{align}\label{E-oR-0}
        \int_{T_{e,L}}(f+g)\ \dx< \infty, \ \ f = u^q, \ \ g = v^p, \ \  T_{e,L} = \{ x \in \mathbb R^N: |x-(x\cdot e)e|< L\},
    \end{align}
    for all $L>0$. Fix $ e \in \mathcal E $ and a constant $ M > 0 $. Choose a smooth nonincreasing function $ \zeta : [0,\infty) \to [0,1]$ satisfying $ \zeta = 1 $ on $[0,1]$ and $\zeta = 0$ on $[2,\infty)$, and set $ \zeta_R (z) = \zeta( |z|/R ) $ for $ z \in e^\perp $. Define
    \begin{align*}
        A_{e,R} & = -\frac{1}{2} \int_{e^\perp} \int_{e^\perp} \int_0^\infty \int_0^\infty \sum_{I } \sum_{J } \bigl( \zeta_R(z) + \zeta_R(w) \bigr) \bigl( \vartheta_M (m_I) - \vartheta_M(m_J) \bigr) T_e(I,J) \ \textup{d} \tau \dt \textup{d} w \textup{d} z,\\
        B_{e,R} & = \frac{1}{2} \int_{e^\perp} \int_{e^\perp} \int_0^\infty \int_0^\infty \sum_{I } \sum_{J } \bigl( \zeta_R(z) - \zeta_R(w) \bigr) \bigl( \vartheta_M(m_I) + \vartheta_M(m_J) \bigr) T_e(I,J) \ \textup{d} \tau \dt \textup{d} w \textup{d}z,
    \end{align*}
    where the sums are taken over $I\in\mathcal I_f(e,z,t)$ and $J\in\mathcal I_g(e,w,\tau)$, and
    \begin{align*}
        \vartheta_M (s) = \begin{cases}
            -M, \ \ & s < -M, \\
            s, \ \ & -M \leq s \leq M, \\
            M, \ \ & s > M.
        \end{cases}
    \end{align*}
    Next, we show that the integrals defining $A_{e,R}$ and $B_{e,R}$ converge absolutely. Since $\zeta_R$ has compact support and $\vartheta_M$ is bounded, it suffices to integrate $|T_e(I,J)|$ with either $z$ or $w$ restricted to a bounded set.

    Fix an interval $ I=(s_-,s_+) \in \mathcal I_f(e,z,t) $. Write $ x_\pm = z + s_\pm e $. Then $ u(x_-) = u(x_+) = t^{1/q} $. For $ J \in \mathcal I_g(e,w,\tau)$, integration with respect to $s$ yields
    \begin{align*}
        T_e(I,J) = \int_{I \times J} \partial_s k(s, s') \ \ds \textup{d}s' = \int_J \bigl( K(x_+-w-s'e) - K(x_- -w-s'e) \bigr)\ \ds'.
    \end{align*}
    Thus we obtain
    \begin{align}\label{E-oR-1}
        & \ \  \int_{e^\perp}\int_0^\infty \sum_{J\in\mathcal I_g(e,w,\tau)} T_e(I,J)\ \textup{d} \tau \textup{d} w \nonumber\\
        & = \int_{e^{\perp}} \int_{0}^{\infty} \sum_{J\in\mathcal I_g(e,w,\tau)} \int_{\R} (K(x_+ - w - s' e) - K(x_- - w - s'e)) \chi_{J}(s') \ \ds' \textup{d} \tau \textup{d} w \nonumber \\
        & = \int_{e^{\perp}} \int_{\R}  (K(x_+ - w - s' e) - K(x_- - w - s'e))  \int_{0}^{\infty} \sum_{J\in\mathcal I_g(e,w,\tau)} \chi_{J}(s') \ \textup{d} \tau \ds'  \textup{d} w  \nonumber \\
        & = \int_{\mathbb R^N}\bigl(K(x_+-y)-K(x_--y)\bigr) g(y)\ \dy \\
        & =u(x_+)-u(x_-)=0. \nonumber
    \end{align}
   Similarly, we have
    \begin{align}\label{E-oR-2}
        \int_{e^\perp} \int_0^\infty \sum_{ J \in \mathcal I_g( e,w,\tau)} |T_e(I,J)|\ \textup{d} \tau \textup{d} w \leq \int_{\mathbb R^N} \bigl( K(x_+-y) + K(x_- - y) \bigr) g(y)\ \dy = 2 t^{1/q}.
    \end{align}
    For almost every $z$, all positive superlevel components are bounded and $s\mapsto f(z+se)$ is $C^1$ on each compact interval. The set of its critical values has measure zero. At every positive regular level, each level point is an endpoint of exactly one superlevel component, and every component has two endpoints. Summing \eqref{E-oR-2} over $I$ and integrating in $t$, we may therefore apply the one-dimensional area formula to obtain
    \begin{align}
        & \ \  \int_0^{\infty} \int_{e^\perp} \int_0^\infty \sum_{ I \in \mathcal I_f(e, z, t)} \sum_{ J \in \mathcal I_g(e,w,\tau)} |T_e(I,J)|\ \textup{d} \tau \textup{d} w  \dt  \nonumber \\
        & \leq 2\int_0^{\infty} \sum_{ I \in \mathcal I_f(e, z, t)} t^{1/q} \ \dt = 2 \int_0^{\infty} t^{1/q} \# \mathcal{I}_f(e,z,t) \ \dt \label{E-oR-add} \\
        & = \int_0^{\infty} t^{1/q} \# \{s: f(z + se ) =t \} \ \dt  = \int_{\R} f(z+se)^{1/q} |\partial_e f(z + se)| \ \ds \nonumber \\
        & = q\int_{\R} f(z+se) |\partial_e u(z + se)| \ \ds, \nonumber
    \end{align}
    The last equality uses $ \partial_e f=q u^{q-1}\partial_e u $. Since $(u,v)$ is bounded, Proposition \ref{lem-int-repre} gives $ \|\nabla u\|_{L^\infty}+\|\nabla v\|_{L^\infty}<\infty$. Thus, by \eqref{E-oR-0} and \eqref{E-oR-add}, we obtain
    \begin{align}\label{E-oR-3}
        &\quad \int_{\substack{z\in e^\perp \\ |z|<L} } \int_{e^\perp} \int_0^\infty \int_0^\infty \sum_I \sum_J |T_e(I,J)| \ \textup{d} \tau \dt \textup{d} w \textup{d} z \\
        &  \leq q\int_{T_{e,L}} f(x)|\partial_e u(x)| \dx \\
        & \leq q\|\nabla u\|_{L^\infty} \int_{T_{e,L}} f(x)\ \dx < \infty. \nonumber
    \end{align}
    The same argument applies to $g=v^p$, which is $C^1$ on compact subsets even when $p<1$, since $v>0$. Using $\partial_s k=-\partial_{s'}k$, for a fixed bounded component $J=(s'_-,s'_+)\in\mathcal I_g(e,w,\tau)$ we obtain
    \begin{align}
        \int_{e^\perp}\int_0^\infty \sum_{I \in \mathcal I_f(e,z,t)} T_e(I,J) \ \dt \textup{d} z & = v(w+s'_-e) - v(w+s'_+e) = 0, \label{E-oR-4}\\
        \int_{\substack{w\in e^\perp \\ |w|<L}} \int_{e^\perp} \int_0^\infty \int_0^\infty \sum_I \sum_J |T_e(I,J)| \ \dt \textup{d}\tau \textup{d} z \textup{d} w & \leq p\|\nabla v\|_{L^\infty} \int_{T_{e,L}} g(x) \ \dx < \infty. \label{E-oR-5}
    \end{align}
    Hence, $A_{e, R}$ and $B_{e,R}$ are well-defined for every $e \in \mathcal{E}$ and every $R>0$. Let
    \begin{align*}
        \Delta_e^M(I,J)=-(\vartheta_M(m_I)-\vartheta_M(m_J))T_e(I,J).
    \end{align*}
    Lemma \ref{lem:interval-force} implies that $\Delta_e^M(I,J)  \geq 0 $. By the definition of $\vartheta_M$, $\Delta_e^M (I,J) \to \Delta_e (I,J)$ as $M \to +\infty$.

    Multiplying \eqref{E-oR-1} by $ \zeta_R (z) \vartheta_M ( m_I ) $ and integrating over $(z, t, I)$, and multiplying \eqref{E-oR-4} by $ \zeta_R(w) \vartheta_M (m_J) $ and integrating over $(w, \tau, J)$, we obtain that
    \begin{align}\label{E-oR-add2}
        0 = \int_{e^\perp} \int_{e^\perp} \int_0^\infty \int_0^\infty \sum_I \sum_J \bigl( \zeta_R(z) \vartheta_M (m_I) - \zeta_R(w) \vartheta_M (m_J) \bigr) T_e(I,J) \ \textup{d} \tau \dt \textup{d} w \textup{d} z.
    \end{align}
    Using the identity
    \begin{align*}
        &\quad \zeta_R(z) \vartheta_M (m_I) -\zeta_R(w) \vartheta_M(m_J)\\
          & = \frac{1}{2} \bigl( \zeta_R(z) + \zeta_R(w) \bigr) \bigl( \vartheta_M (m_I) - \vartheta_M (m_J) \bigr) \\
        &\quad + \frac{1}{2} \bigl( \zeta_R(z) -\zeta_R(w) \bigr) \bigl( \vartheta_M (m_I) + \vartheta_M (m_J) \bigr),
    \end{align*}
    we conclude from \eqref{E-oR-add2} and the definition of $A_{e,R}$ and $B_{e,R}$ that $ 0 \leq A_{e,R}=B_{e,R} $. To estimate $B_{e,R}$, note that
    \begin{align*}
        |\zeta_R(z) - \zeta_R(w)| \leq\frac{C}{R}|z-w| \bigl( \chi_{\{|z| < 2R\}} + \chi_{\{|w|<2R\}} \bigr),
    \end{align*}
   and for $x=z+se$ and $y=w+s'e$, it holds that
    \begin{align*}
        |z-w|\,|\partial_s k(s,s')| = (N-2)\frac{|z-w|\,|s-s'|}{|z-w|^2+(s-s')^2} K(x-y) \leq \frac{N-2}{2}K(x-y).
    \end{align*}
    Consequently,
    \begin{align*}
        |z-w|\, |T_e(I,J)| \leq \frac{N-2}{2} \int_{I \times J} k(s,s')\ \ds \ds'.
    \end{align*}
    Hence, for $e \in \mathcal{E}$ and any $ R > 0$, we have
    \begin{align}\label{E-oR-6}
        & 0 \leq A_{e,R}=B_{e,R} \nonumber\\
        & \quad \leq \frac{CM}{R} \int_{e^\perp} \int_{e^\perp} \int_0^\infty \int_0^\infty \sum_I \sum_J \bigl( \chi_{\{|z|<2R\}} + \chi_{\{|w|<2R\}} \bigr) \int_{I\times J} k(s,s') \ \ds\ds'\ \textup{d} \tau \dt \textup{d}w \textup{d}z \nonumber\\
        & \quad = \frac{CM}{R} \int_{\R^N} \int_{\R^N} \bigl( \chi_{T_{e,2R}}(x) + \chi_{T_{e,2R}}(y) \bigr)  f(x) g(y) K(x-y) \ \dx\dy\\
        & \quad = \frac{CM}{R} \left( \int_{T_{e,2R}} u^{q+1} (x) \ \dx + \int_{T_{e,2R}} v^{p+1}(x) \ \dx \right). \nonumber
    \end{align}
    We now average \eqref{E-oR-6} over $e\in\mathbb S^{N-1}$. By \eqref{zero-defect-0}, for $x\ne0$,
    \begin{align*}
        \frac{1}{|\mathbb{S}^{N-1}|} \int_{\mathbb{S}^{N-1}} \chi_{T_{e,2R}}(x)\ \textup{d} S(e) \leq C\min\left\{1,\left(\frac{R}{|x|}\right)^{N-1}\right\}.
    \end{align*}
    Denote $H = u^{q+1} + v^{p+1}$. Then, from \eqref{E-oR-6}, we obtain
    \begin{align*}
        \frac{1}{|\mathbb{S}^{N-1}|} \int_{\mathbb{S}^{N-1}} A_{e,R} \ \textup{d} S(e) &  \leq \frac{CM}{R} \int_{\R^N} H(x) \left( \frac{1}{|\mathbb{S}^{N-1}|} \int_{\mathbb{S}^{N-1}} \chi_{T_{e,2R}} (x) \ \textup{d} S(e) \right) \ \dx \\
        & \leq \frac{CME(R)}{R} + \frac{CM}{R} \sum_{ k = 0 }^{\infty} \int_{ 2^k R < |x| < 2^{k+1}R} H(x) \min\left\{ 1, \left(  \frac{R}{|x|} \right)^{N-1}  \right\} \ \dx \\
        & \leq \frac{CME(R)}{R} + \frac{CM}{R} \sum_{k=0}^\infty 2^{-k(N-1)} E(2^{k+2}R) \\
        & \leq C M \varepsilon(R) \left( 1 + \sum_{k=0}^{\infty} 2^{-k(N-2)} \right) \to 0,
    \end{align*}
    where $\varepsilon(R)=\sup_{r\geq R} E(r)/r \to 0 $ as $R \to +\infty$. For every $e \in \mathcal E$, $A_{e,R}$ is nonnegative and nondecreasing in $R$. By the monotone convergence theorem,
    \begin{align*}
        \int_{\mathcal E} \lim_{R\to\infty} A_{e,R} \ \textup{d} S(e) = \lim_{R\to\infty}\int_{\mathcal E}A_{e,R}\ \textup{d} S(e) = 0.
    \end{align*}
   Since $\zeta_R \to 1$ as $ R \to +\infty $, for almost every $ e \in \mathcal E$ we have
    \begin{align*}
        \Delta_e^M(I,J) = -\bigl( \vartheta_M (m_I)-\vartheta_M (m_J)\bigr) T_e(I,J)=0,
    \end{align*}
    for almost every interval pair $(I,J)$. Taking the intersection of these full-measure subsets of $\mathbb{S}^{N-1} $ over positive integers $M$ and letting $ M \to \infty $, we obtain $\Delta_e(I,J)=0$ for almost every direction $e \in \mathbb{S}^{N-1} $ and interval pair $(I, J)$. Thus the definition of $ D(\Omega) $ gives \eqref{E-oR}. Hence, Theorem \ref{thm:zero-defect} implies that $(u,v)\in\mathcal G$.
\end{proof}

\section{Proof of Theorem \ref{thm:main}}\label{sec:classification}

We prove Theorem \ref{thm:main}. First, we show that a bounded normalized solution outside $\mathcal G$ satisfies $D(B_R) \geq \delta_*>0$ for a suitable $R>0$.

Define
\begin{align*}
    m(x)=u(x)^{1/\alpha}+v(x)^{1/\beta}.
\end{align*}
Let
\begin{align*}
    \widetilde u(x)=\lambda^\alpha u(a+\lambda x),\ \ \widetilde v(x)=\lambda^\beta v(a+\lambda x),
\end{align*}
then $ \widetilde m(x)=\lambda m(a+\lambda x) $. Here $ \widetilde{m} $ is defined as $ \widetilde{m} = \widetilde{u}^{1/\alpha} + \widetilde{v}^{1/\beta} $.

\begin{thm}\label{thm:defect-gap}
    There exist constants $L_*\geq1$ and $\delta_*>0$, depending only on $N, p , q $, such that for every positive solution $ (u,v) \notin \mathcal G$, if
    \begin{align*}
        m(0)=1,  \ \  m(x) \leq 2 \ \ \text{in }B_{L_*},
    \end{align*}
    then
    \begin{align*}
        D(B_{L_*}) \geq \delta_*.
    \end{align*}
\end{thm}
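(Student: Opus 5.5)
We argue by contradiction, combining compactness with the local isolation result, Theorem~\ref{Isolate-thm}. Suppose the statement fails. Then for $L_j=j$ and $\delta_j=1/j$ there are positive solutions $(u_j,v_j)\notin\mathcal G$ with
\[
m_j(0)=1,\qquad m_j\le 2 \ \text{in } B_j,\qquad D_{u_j,v_j}(B_j)<1/j.
\]

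\textbf{Compactness.} The bound $m_j\le2$ on $B_j$ gives $u_j\le 2^\alpha$ and $v_j\le 2^\beta$ on $B_j$. The integral representation of Proposition~\ref{lem-int-repre}, together with the uniform bounds \eqref{int-uv-bound}, controls the far contributions to $\nabla u_j$ and $\nabla v_j$. This yields uniform $C^{1}$ bounds on compact sets, and Schauder estimates upgrade them to $C^{2,\gamma}_{\loc}$ bounds. (When $p<1$ we need a positive lower bound for $v_j$ on compacts; it follows from $v_j\ge c_N\int_{B_1}u_j^q$-type bounds as in \eqref{M-u-M-v}, using Harnack.) Passing to a subsequence, $(u_j,v_j)\to(u,v)$ in $C^2_{\loc}$.

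\textbf{Properties of the limit.} First, $m(0)=1$. Second, $(u,v)$ is a nonnegative entire solution, and by the strong maximum principle both components are positive, since $m(0)=1$ rules out the trivial solution. Third, the integral representation passes to the limit. We expect lower semicontinuity of the defect to give
\[
D_{u,v}(B_L)\le\liminf_j D_{u_j,v_j}(B_L)\le\liminf_j D_{u_j,v_j}(B_j)=0 \quad\text{for every } L.
\]
The argument for this is Fatou's lemma applied to the level-set representation \eqref{D-e-Omega}, with all terms nonnegative. The point to check is that superlevel components of $f_j$ converge to those of $f$ at regular levels for almost every $(e,z,t)$, so that $\chi_{\{\cdot\subset B_L\}}\Delta_e(I,J)$ converges pointwise almost everywhere (or at least its $\liminf$ dominates). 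Theorem~\ref{thm:zero-defect} then gives $(u,v)\in\mathcal G$.

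\textbf{Contradiction.} Write $(u,v)=(\lambda^\alpha U(\lambda(\cdot-a)),\lambda^\beta V(\lambda(\cdot-a)))$. Since $m(0)=1$ and $m\le2$ on all of $\R^N$, the parameters $a$ and $\lambda$ range over a compact set; for instance $m$ attains its maximum $\lambda\,m_{U,V}(0)$ at $a$, which bounds $\lambda$, and $m(0)=1$ bounds $|a|$. Let $a_j$ be a maximum point of $u_j$ near $a$, which exists for large $j$ by $C^2$ convergence and nondegeneracy of $D^2u(a)$. Rescale
\[
\widehat u_j=\mu_j^\alpha u_j(a_j+\mu_j\,\cdot),\qquad \widehat v_j=\mu_j^\beta v_j(a_j+\mu_j\,\cdot),
\]
with $\mu_j$ chosen so that $\widehat u_j(0)=1$, so that also $\nabla\widehat u_j(0)=0$. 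Then $\widehat u_j\to U$ and $\widehat v_j\to V$ in $C^2(\overline{B_{R_0}})$. By Theorem~\ref{Isolate-thm}, $(\widehat u_j,\widehat v_j)=(U,V)$ for large $j$, so $(u_j,v_j)\in\mathcal G$, which is a contradiction. Since $\mathcal G$ and the hypotheses are scale-covariant only through the normalization of $m$, $L_*$ and $\delta_*$ depend only on $N,p,q$.

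\textbf{Main obstacle.} We expect the hardest step to be the lower semicontinuity of $D(B_L)$ under $C^2_{\loc}$ convergence. The difficulty is the discontinuous indicator $\{z+\overline Ie\subset B_L\}$ and the fact that components can merge or split in the limit. We would first try to handle this by working at levels $t$ that are regular values of $s\mapsto f(z+se)$ (almost every $t$ by Sard on lines). At such levels the components of $f_j$ inside $B_L$ converge to those of $f$, with convergent endpoints. Intervals whose closure touches $\partial B_L$ for the limit form a null set, and Fatou's lemma then closes the estimate. If this proves delicate, an alternative is to replace $B_L$ by the smooth weight version $D_R$, which is continuous, and use $D(B_R)\le D_R\le D(B_{2R})$.
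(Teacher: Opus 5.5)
Your proposal is correct and follows essentially the same route as the paper: compactness from the local bounds $m_j\le 2$; lower semicontinuity of $D$ via regular levels on each line, convergence of component endpoints and Fatou's lemma; then Theorem~\ref{thm:zero-defect}; and finally a renormalization at a maximum point of $u_j$ so that Theorem~\ref{Isolate-thm} applies. The paper needs only the $\liminf$ direction for limit pairs whose closures lie in the open set $\Omega$, so your null-set claim about intervals touching $\partial B_L$ is not needed; likewise, your gradient bounds via the integral representation and the positive lower bound on $v_j$ are harmless but unnecessary, since interior $W^{2,r}$ and Schauder estimates together with the H\"older continuity of $s^p$ already give $C^{2,\gamma}_{\loc}$ bounds.
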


\begin{proof}
    We prove this by contradiction.
    Suppose that the conclusion is false. Then, for every positive integer $j$, there exists a positive entire solution $(u_j,v_j)\notin\mathcal G$ such that
    \begin{align*}
        m_j(0)=1, \ \  m_j(x) \leq 2  \ \ \text{in }B_j, \ \ D_{u_j,v_j} (B_j)< \frac{1}{j},
    \end{align*}
    where $ m_j=u_j^{1/\alpha}+v_j^{1/\beta} $.

    We first show that this sequence has a locally convergent subsequence. Fix $R>0$. For every sufficiently large $j$, the bound $m_j\leq2$ on $B_j$ gives
    \begin{align*}
        \|u_j\|_{L^\infty(B_{2R})}\leq2^\alpha, \ \  \|v_j\|_{L^\infty(B_{2R})}\leq2^\beta.
    \end{align*}
    Since
    \begin{align*}
        -\Delta u_j=v_j^p, \ \ -\Delta v_j=u_j^q,
    \end{align*}
    interior $W^{2,r}$ estimates with $r>N$ give uniform local $C^{1,\gamma}$ bounds. The nonlinearities $s^p$ and $s^q$ are H\"older continuous on bounded subsets of $[0,\infty)$. Thus Schauder estimates give uniform local $C^{2,\eta}$ bounds for some $\eta>0$. By taking a diagonal subsequence, we obtain $(u_j,v_j)\to(u,v)$ in $C^2_{\mathrm{loc}}(\R^N)$. The limit is a nonnegative classical solution of \eqref{Main1}. Since $m_j(0)=1$, we have
    \begin{align*}
        u(0)^{1/\alpha}+v(0)^{1/\beta}=1.
    \end{align*}
    Thus $(u,v)$ is nontrivial. The strong maximum principle implies that $u$ and $v$ are positive in $ \R^N $.

    Next, we prove that $D$ is lower semicontinuous and that $D_{u,v}(B_R)=0$. Write
    \begin{align*}
        f_j = u_j^q, \ \  g_j = v_j^p, \ \  f=u^q, \ \  g=v^p.
    \end{align*}
    Then $f_j\to f$ and $g_j\to g$ in $C^1_{\mathrm{loc}}(\R^N)$, since the positive limits $u,v$ are bounded away from zero on compact sets. Let $\Omega\subset\R^N$ be a bounded open set, and fix a direction $e$, $z\in e^\perp$, and $t>0$. Let $I=(s_-,s_+)\in\mathcal I_f(e,z,t)$ be a bounded component with $z+\overline I e\subset\Omega$.

    We first assume that both endpoints are regular in the sense that
    \begin{align*}
        \partial_e f(z+s_-e) \neq 0, \ \ \partial_e f(z+s_+e) \neq 0.
    \end{align*}
    For almost every $t>0$, this assumption holds. Indeed, the set of critical values of a one-dimensional $C^1$ function has Lebesgue measure zero. Since $(s_-,s_+)$ is a connected component of $\{s:f(z+se)>t\}$, we have
    \begin{align}\label{compact-1}
        \partial_e f(z+s_-e) > 0, \ \  \partial_e f(z+s_+e)< 0.
    \end{align}
    Since $f_j \to f$ in $C^1$ near $s_\pm$, for all sufficiently large $j$, there exist unique points $s_{-,j}$ and $s_{+,j}$, such that $s_{-,j} \to s_-, \ s_{+,j}\to s_+$ and
    \begin{align}\label{compact-2}
        f_j(z+s_{-,j}e)=t, \ \  f_j(z+s_{+,j}e)=t.
    \end{align}
    Since $f-t$ is strictly positive on every compact subset of $(s_-,s_+)$, the same holds for $f_j-t$ for large $j$. By \eqref{compact-1} and $C^1$ convergence, $ f_j < t $ on the left of $s_{-,j}$ and on the right of $s_{+,j}$. Thus \eqref{compact-2} implies that
    \begin{align*}
        I_j=(s_{-,j},s_{+,j})
    \end{align*}
    is a connected component of $ \{s:f_j(z+se)>t\}$. Since $z+\overline I e$ is compactly contained in $\Omega$, the convergence of $s_{\pm,j} \to s_{\pm}$ then implies $ z + \overline{I_j}e \subset \Omega $ for every sufficiently large $j$.

    Similarly, a bounded component $J$ of $\{s:g(w+se)>\tau\}$ with regular endpoints and $w+\overline J e\subset\Omega$ gives components $J_j$ of the superlevel set of $g_j$. Thus $m_{I_j}\to m_I$ and $m_{J_j}\to m_J$ as $j\to\infty$.

    For $z\neq w$, we have $|x-y|\geq|z-w|$, so $K(x-y)$ is smooth along these intervals. Thus $T_e(I_j,J_j)\to T_e(I,J)$ and
    \begin{align*}
        \Delta_e(I_j,J_j) = -(m_{I_j} - m_{J_j}) T_e (I_j,J_j) \to  -(m_I-m_J) T_e(I,J) = \Delta_e(I,J).
    \end{align*}

    Fix finitely many distinct complete interval pairs $(I^k,J^k)$ for $(f,g)$, $k=1,\ldots,M$, with $z+\overline{I^k}e,\ w+\overline{J^k}e\subset\Omega$. For large $j$, the corresponding pairs $(I_j^k,J_j^k)$ are distinct, satisfy $z+\overline{I_j^k}e,\ w+\overline{J_j^k}e\subset\Omega$, and obey $\Delta_e(I_j^k,J_j^k)\to\Delta_e(I^k,J^k)$. Hence
    \begin{align}\label{compact-3}
        \sum_{k=1}^M \Delta_e (I^k,J^k)  = \lim_{j\to\infty} \sum_{k=1}^M \Delta_e (I_j^k,J_j^k)  \leq \liminf_{j\to\infty} \sum_{I_j\in\mathcal I_{f_j}} \sum_{J_j\in\mathcal I_{g_j}} \chi_{\{z+\overline{I_j}e\subset\Omega,\, w + \overline{J_j} e \subset \Omega\}} \Delta_e (I_j,J_j).
    \end{align}
    Taking the supremum in \eqref{compact-3} over all finite collections of interval pairs with $z+\overline I e,\ w+\overline J e\subset\Omega$, we obtain
    \begin{align}\label{compact-4}
        & \sum_{I \in \mathcal I_f(e,z,t)} \sum_{J \in \mathcal I_g(e,w,\tau)} \chi_{\{ z + \overline I e \subset \Omega, \; w + \overline J e \subset \Omega\}} \Delta_e(I,J) \\
        & \leq\liminf_{j\to\infty} \sum_{I_j\in\mathcal I_{f_j}(e,z,t)} \sum_{J_j\in\mathcal I_{g_j}(e,w,\tau)} \chi_{\{z+\overline{I_j} e \subset \Omega, \; w+\overline{J_j} e \subset \Omega\}} \Delta_e(I_j,J_j). \nonumber
    \end{align}
   Integrating \eqref{compact-4} with respect to $z,w,t,\tau$, and averaging over $e \in \mathbb{S}^{N-1} $, and applying Fatou's lemma, we obtain
    \begin{align}\label{compact-5}
        D_{u,v}(\Omega) \leq \liminf_{j\to\infty} D_{u_j,v_j} (\Omega).
    \end{align}

    For each fixed $R>0$, \eqref{compact-5} and $B_R\subset B_j$ for large $j$ give
    \begin{align*}
        0 \leq D_{u,v}(B_R) \leq \liminf_{j\to\infty}D_{u_j,v_j}(B_R) \leq \liminf_{j\to\infty}D_{u_j,v_j}(B_j)=0.
    \end{align*}
    Thus $D_{u,v}(B_R)=0$ for every $R>0$. Theorem \ref{thm:zero-defect} then implies $ (u,v)\in\mathcal G $. Without loss of generality, we may assume $(u,v) = (U,V)$. Indeed, if
    \begin{align*}
        u(x)=\lambda_0^\alpha U(\lambda_0(x-a_0)), \ \ v(x)=\lambda_0^\beta V(\lambda_0(x-a_0)),
    \end{align*}
    for some $ a_0 \in \R^N$ and $ \lambda_0 > 0$, then the sequence $(\widehat u_j,\widehat v_j)$ defined by
    \begin{align*}
        \widehat u_j(x) = \lambda_0^{-\alpha} u_j (a_0 + x/\lambda_0), \ \  \widehat v_j(x) = \lambda_0^{-\beta} v_j (a_0 + x/\lambda_0),
    \end{align*}
    is a sequence of positive solutions of \eqref{Main1} outside $\mathcal G$, and $ (\widehat u_j,\widehat v_j) \to (U,V) $ in $ C^2_{\mathrm{loc}} (\R^N) $. Relabeling this sequence, we may assume $(u_j,v_j)\to(U,V)$ in $C^2_{\mathrm{loc}}(\R^N)$.

     Recall that $U$ is radial and strictly decreasing with respect to $ r = |x|$, and $-\Delta U=V^p$ implies
    \begin{align*}
        D^2U(0)=-\frac{V(0)^p}{N}\operatorname{Id},
    \end{align*}
    so the origin is a strict nondegenerate maximum point of $U$.

    Choose a sufficiently small ball $B_r$ centered at the origin. By local $C^2$ convergence, $u_j$ attains an interior maximum at some $a_j\in B_r$ for large $j$, and
    \begin{align*}
        a_j \to 0, \ \  \nabla u_j(a_j) = 0, \ \ u_j(a_j) \to U(0) = 1.
    \end{align*}
    Set $ \lambda_j = u_j(a_j)^{-1/\alpha} $ and define
    \begin{align}\label{compact-6}
        \widetilde u_j(x) = \lambda_j^\alpha u_j (a_j + \lambda_j x), \ \  \widetilde v_j(x) = \lambda_j^\beta v_j (a_j+  \lambda_jx).
    \end{align}
    Then
    \begin{align}\label{compact-7}
        \widetilde u_j(0) = 1 , \ \  \nabla \widetilde u_j (0) = 0.
    \end{align}
    Furthermore, since $ a_j \to 0 $ and $ \lambda_j \to 1 $, we have
    \begin{align}\label{compact-8}
        (\widetilde u_j,\widetilde v_j) \to (U,V) \ \ \text{in } \ \ C^2_{\mathrm{loc}}(\R^N).
    \end{align}
    By Theorem \ref{Isolate-thm}, \eqref{compact-7}, and \eqref{compact-8}, we have
    \begin{align*}
        (\widetilde u_j,\widetilde v_j)=(U,V).
    \end{align*}
    From \eqref{compact-6}, we conclude that  $ (u_j,v_j)\in\mathcal G $ for all sufficiently large $j$. This contradicts $(u_j,v_j)\notin\mathcal G$ and proves the theorem.
\end{proof}

Next, in order to prove Theorem \ref{thm:main}, we introduce the doubling lemma of Pol\'a\v{c}ik, Quittner, and Souplet \cite[Lemma 5.1, Remark 5.2]{PQS2007}.

\begin{lemma}[Doubling lemma \cite{PQS2007}]\label{lem:doubling}
    Let $\Omega\subset\R^N$ be open, and let $M:\Omega\to(0,\infty)$ be locally bounded. Suppose that $k>0$ and $x_0\in\Omega$ satisfy
    \begin{align*}
        M(x_0)\operatorname{dist}(x_0,\partial\Omega)>2k.
    \end{align*}
    Then there exists $y\in\Omega$ such that
    \begin{align*}
        M(y)\geq M(x_0), \ \  M(y)\operatorname{dist}(y,\partial\Omega)>2k,
    \end{align*}
    and
    \begin{align*}
        M(x) \leq 2M(y), \ \ x\in B_{k/M(y)}(y).
    \end{align*}
    When $\Omega=\R^N$, we set $\operatorname{dist}(x,\partial\Omega)=\infty$.
\end{lemma}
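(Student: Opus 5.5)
The plan is the standard iteration argument of Pol\'a\v{c}ik--Quittner--Souplet, by contradiction. Suppose the conclusion fails. I will build inductively a sequence $y_0=x_0,y_1,y_2,\ldots$ in $\Omega$ satisfying, for every $i$,
\begin{align*}
    M(y_i)\geq 2^i M(x_0), \ \ M(y_i)\operatorname{dist}(y_i,\partial\Omega)>2k, \ \ |y_{i+1}-y_i|<\frac{k}{M(y_i)}.
\end{align*}
The first two properties hold for $i=0$ by hypothesis.

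For the inductive step, assume $y_i$ has these properties. Then $M(y_i)\geq M(x_0)$ and $M(y_i)\operatorname{dist}(y_i,\partial\Omega)>2k$, so $y_i$ satisfies the first two requirements on $y$. Since the lemma is assumed false, the third requirement must fail. Hence there is $y_{i+1}\in B_{k/M(y_i)}(y_i)$ with $M(y_{i+1})>2M(y_i)\geq 2^{i+1}M(x_0)$.

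To see that $y_{i+1}$ lies in $\Omega$ and inherits the distance property, use $\operatorname{dist}(y_i,\partial\Omega)>2k/M(y_i)$. This gives
\begin{align*}
    \operatorname{dist}(y_{i+1},\partial\Omega)\geq \operatorname{dist}(y_i,\partial\Omega)-\frac{k}{M(y_i)}>\frac12\operatorname{dist}(y_i,\partial\Omega)>0.
\end{align*}
Multiplying by $M(y_{i+1})>2M(y_i)$ yields $M(y_{i+1})\operatorname{dist}(y_{i+1},\partial\Omega)>M(y_i)\operatorname{dist}(y_i,\partial\Omega)>2k$. This closes the induction. When $\Omega=\R^N$ the distance condition is vacuous, and only the growth of $M$ and the step bound matter.

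To reach a contradiction, sum the steps. Since $|y_{j+1}-y_j|<k/M(y_j)\leq k2^{-j}/M(x_0)$, the sequence $(y_i)$ is Cauchy. Moreover $|y_i-x_0|<2k/M(x_0)$ for all $i$, so every $y_i$ lies in the ball $B=\overline{B_{2k/M(x_0)}(x_0)}$. The strict inequality $2k/M(x_0)<\operatorname{dist}(x_0,\partial\Omega)$ shows that $B$ is a compact subset of $\Omega$; in the case $\Omega=\R^N$ this is automatic. Since $M$ is locally bounded, it is bounded on $B$. But $M(y_i)\geq 2^iM(x_0)\to\infty$, a contradiction.

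The only point requiring care is the distance bookkeeping, namely that each new point stays in $\Omega$ with $M\cdot\operatorname{dist}>2k$. The halving estimate above handles this. It is also what makes the closed ball strictly interior, so that local boundedness can be applied on a compact set.
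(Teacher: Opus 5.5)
Your proof is correct and is exactly the standard iteration-by-contradiction argument of Pol\'a\v{c}ik--Quittner--Souplet; the paper does not reprove this lemma but simply cites it from \cite[Lemma 5.1, Remark 5.2]{PQS2007}. Two small wording points: $y_{i+1}\in\Omega$ holds because $B_{k/M(y_i)}(y_i)\subset B_{\operatorname{dist}(y_i,\partial\Omega)}(y_i)\subset\Omega$ (and $M(y_{i+1})$ must be defined), not because $\operatorname{dist}(y_{i+1},\partial\Omega)>0$, which alone does not rule out the exterior; and the Cauchy property is never used, since the bound $|y_i-x_0|<2k/M(x_0)$ already suffices.
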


Theorem \ref{thm:defect-gap} and Lemma \ref{lem:doubling} give the following lemma.

\begin{lemma}\label{D->m}
    Let $(u,v)$ be a positive solution of \eqref{Main1} with $(u,v)\notin\mathcal G$. For every $a\in\R^N$ and $r>0$, if $D(B_{2r}(a))<\delta_*$, then
    \begin{align*}
        \sup_{B_r(a)} m \leq\frac{8L_*}{r},
    \end{align*}
    where $\delta_*$ and $ L_* $ are the constants in Theorem \ref{thm:defect-gap} and $ m(x) = u(x)^{1/\alpha}+v(x)^{1/\beta} $.
\end{lemma}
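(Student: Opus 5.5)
We argue by contradiction, using the doubling lemma to locate a point where $m$ is large and locally almost maximal, then rescaling to the normalization of Theorem~\ref{thm:defect-gap}. Suppose $D(B_{2r}(a))<\delta_*$ but $m(x_0)>8L_*/r$ for some $x_0\in B_r(a)$. We apply Lemma~\ref{lem:doubling} with $\Omega=B_{2r}(a)$, $M=m$ (positive and continuous) and $k=2L_*$. Since $\operatorname{dist}(x_0,\partial\Omega)>r$, we have
\begin{align*}
    m(x_0)\operatorname{dist}(x_0,\partial\Omega)>8L_*\geq 2k.
\end{align*}
The lemma therefore gives a point $y\in B_{2r}(a)$ with
\begin{align*}
    m(y)\operatorname{dist}(y,\partial\Omega)>4L_*, \qquad m\leq 2m(y) \ \text{ on } B_{2L_*/m(y)}(y).
\end{align*}

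Next we rescale. Set $\lambda=1/m(y)$ and $\widetilde u(x)=\lambda^\alpha u(y+\lambda x)$, $\widetilde v(x)=\lambda^\beta v(y+\lambda x)$. By the scaling rule $\widetilde m(x)=\lambda m(y+\lambda x)$, we get $\widetilde m(0)=1$ and $\widetilde m\leq 2$ on $B_{2L_*}\supset B_{L_*}$. The pair $(\widetilde u,\widetilde v)$ is again a positive solution, and it is not in $\mathcal G$ because $\mathcal G$ is invariant under translations and critical dilations. Theorem~\ref{thm:defect-gap} then yields $D_{\widetilde u,\widetilde v}(B_{L_*})\geq\delta_*$. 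By Lemma~\ref{lem:defect-properties}(1), this equals $D_{u,v}(y+\lambda B_{L_*})=D_{u,v}(B_{L_*/m(y)}(y))$.

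It remains to place this ball inside $B_{2r}(a)$ and use monotonicity of $D$. From $m(y)\operatorname{dist}(y,\partial\Omega)>4L_*$ we get $L_*/m(y)<\operatorname{dist}(y,\partial\Omega)/4$, so $B_{L_*/m(y)}(y)\subset B_{2r}(a)$. Because the integrand defining $D_e(\Omega)$ is nonnegative (Lemma~\ref{lem:interval-force}) and the indicator constraint $z+\overline Ie,\ w+\overline Je\subset\Omega$ is monotone in $\Omega$, $D$ is monotone in $\Omega$; this is also implied by Lemma~\ref{lem:defect-properties}(2) with $k=1$. Hence
\begin{align*}
    \delta_*\leq D(B_{L_*/m(y)}(y))\leq D(B_{2r}(a))<\delta_*,
\end{align*}
a contradiction. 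Therefore $\sup_{B_r(a)}m\leq 8L_*/r$.

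The only delicate point is the bookkeeping of constants in the doubling lemma. We need the doubling ball, of radius $k/m(y)$, to contain the rescaled $B_{L_*}$, which requires $k\geq L_*$. We also need $B_{L_*/m(y)}(y)$ to lie in $\Omega$, which follows from the lemma's conclusion $m(y)\operatorname{dist}(y,\partial\Omega)>2k$. The choice $k=2L_*$ satisfies both, and the hypothesis $m(x_0)>8L_*/r$ is exactly what the lemma needs at the starting point $x_0$.
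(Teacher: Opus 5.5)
Your proof is correct and follows the paper's argument: contradiction, the doubling lemma on $B_{2r}(a)$, critical rescaling to the normalization of Theorem~\ref{thm:defect-gap}, the scaling identity of Lemma~\ref{lem:defect-properties}(1), and monotonicity of $D$ in $\Omega$; the paper takes $k=L_*$ instead of your $k=2L_*$, which makes no difference. One small correction: monotonicity of $D$ does follow from the nonnegative integrand and the monotone indicator, as you say, but it does not follow from Lemma~\ref{lem:defect-properties}(2), which bounds $\sum D(\Omega_i)$ by the smoothed, origin-centered $D_R$ rather than by $D(B_{2r}(a))$.
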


\begin{proof}
    Suppose, to the contrary, that $D(B_{2r}(a)) < \delta_*$, but there exists $x_0 \in B_r(a)$ such that
    \begin{align*}
        m(x_0)>\frac{8 L_*}{r}.
    \end{align*}
    Since $x_0\in B_r(a)$, we have
    \begin{align*}
        m(x_0) \operatorname{dist}(x_0, \partial B_{2r}(a)) > 8L_* > 2k.
    \end{align*}
    Apply Lemma \ref{lem:doubling} with $\Omega=B_{2r}(a)$, $M=m$, and $k=L_*$, there is a point $ y \in B_{2r}(a) $ such that
    \begin{align}\label{D->m-1}
        m(y)\geq m(x_0),\ \  m(y)\operatorname{dist}(y,\partial B_{2r}(a))>2k,
    \end{align}
    and
    \begin{align}\label{D->m-2}
        m(x)\leq2m(y),\ \  x\in B_{L_*/m(y)}(y).
    \end{align}
    By \eqref{D->m-1},
    \begin{align*}
        |y-a|+\frac{L_*}{m(y)}
        &=2r-\operatorname{dist}(y,\partial B_{2r}(a))+\frac{L_*}{m(y)}\\
        &<2r+\frac{L_*-2k}{m(y)}<2r.
    \end{align*}
    Hence $B_{L_*/m(y)}(y)\subset B_{2r}(a)$. Set $\lambda=m(y)^{-1}$ and define
    \begin{align*}
        \widetilde u(x) = \lambda^\alpha u(y+\lambda x), \ \ \widetilde v(x)=\lambda^\beta v( y+ \lambda x).
    \end{align*}
    Then $(\widetilde u,\widetilde v)$ is a solution of \eqref{Main1} outside $\mathcal G$, and
    \begin{align*}
        \widetilde m(0)=1, \ \  \widetilde m \leq 2, \ \ \text{in } \ \ B_{L_*}.
    \end{align*}
    Theorem~\ref{thm:defect-gap} and the scaling property in Lemma~\ref{lem:defect-properties} give
    \begin{align*}
        \delta_* & \leq D_{\widetilde u,\widetilde v}(B_{L_*}) =D_{u,v}(B_{L_*/m(y)}(y)) \leq D_{u,v}(B_{2r}(a)) < \delta_*,
    \end{align*}
    a contradiction. This completes the proof.
\end{proof}

Next, we prove Theorem \ref{thm:main} under the assumption that $u$ and $v$ are bounded.

\begin{thm}\label{thm-bounded}
    Let $(u,v)$ be a positive bounded solution of \eqref{Main1}. Then $(u,v)\in\mathcal G$.
\end{thm}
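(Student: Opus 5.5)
The plan is to verify the hypothesis of Theorem~\ref{thm:sublinear-energy}, namely $E(R)=o(R)$, by contradiction with $(u,v)\notin\mathcal G$. Theorem~\ref{thm:sublinear-energy} then places $(u,v)$ in $\mathcal G$ after all. The tools are the defect budget of Proposition~\ref{prop:defect-energy}, the defect gap encoded in Lemma~\ref{D->m}, and a covering argument.

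\emph{Translation-invariant energy.} Assume $(u,v)\notin\mathcal G$ and set
\begin{align*}
F(R)=\sup_{a\in\R^N}\int_{B_R(a)}\bigl(u^{q+1}+v^{p+1}\bigr)\,\dx .
\end{align*}
Boundedness gives $F(R)\le C\|H\|_{L^\infty}R^N<\infty$ for every $R$. By the scaling/translation property in Lemma~\ref{lem:defect-properties}(1), Proposition~\ref{prop:defect-energy} applies with the cutoff centered at any point $a$. Hence the localized defect $D_R(a)$ satisfies
\begin{align*}
D_R(a)\le C\bigl(F(2R)^{1-\sigma/N}+F(2R)^{1-1/N}\bigr).
\end{align*}

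\emph{Good and bad balls.} Fix a small $\theta\in(0,1)$ and cover $B_R(a)$ by balls $B_{\theta R}(b_i)$. Arrange the centers so that the doubled balls $B_{2\theta R}(b_i)$ split into $C(N)$ subfamilies, each consisting of pairwise disjoint balls contained in $B_{2R}(a)$; the number of balls is $\lesssim\theta^{-N}$.

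A ball is \emph{good} if $D(B_{2\theta R}(b_i))<\delta_*$. By Lemma~\ref{D->m}, on a good ball $m\le 8L_*/(\theta R)$. Since $u^{q+1}+v^{p+1}\le Cm^N$ (because $\alpha(q+1)=\beta(p+1)=N$), each good ball carries energy at most $C L_*^N$. The good balls therefore contribute at most $C\theta^{-N}$ in total.

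For the bad balls, apply Lemma~\ref{lem:defect-properties}(2) to each disjoint subfamily. This bounds their number by
\begin{align*}
C\delta_*^{-1}D_{2R}(a)\le C\bigl(F(4R)^{1-\sigma/N}+F(4R)^{1-1/N}\bigr),
\end{align*}
and each bad ball carries energy at most $F(\theta R)$.

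\emph{Iterative inequality.} Combining the two contributions gives
\begin{align*}
F(R)\le C_\theta + C\,F(4R)^{1-\sigma/N}F(\theta R)+C\,F(4R)^{1-1/N}F(\theta R).
\end{align*}
The intended route is to iterate this from the a priori bound $F(R)\le CR^N$, using the scaling gain $F(\theta R)$ versus $F(4R)$ together with the sublinear exponent $1-\sigma/N<1$. The aim is first to improve the growth exponent of $F$ step by step, and eventually to reach $F(R)=o(R)$. Since $E(R)\le F(R)$, Theorem~\ref{thm:sublinear-energy} then applies and gives the contradiction.

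\emph{Main obstacle.} The crude product form of the bad-ball term does not obviously close: if $F(r)\sim r^\gamma$, the right side grows like $R^{\gamma(2-\sigma/N)}$. I expect the real work is to avoid multiplying by $F(\theta R)$. Instead, the energy on each bad ball should be bounded through its own defect. On a ball with $D(B_{2r})$ small relative to its size one still has local control, so the plan is to run the same dichotomy recursively inside bad balls across dyadic scales. The total number of bad balls summed over all scales would be charged against the single budget $D_{2R}(a)\lesssim F(4R)^{1-\sigma/N}$, using nested disjointness and Lemma~\ref{lem:defect-properties}(2). Good balls at scale $2^{-k}R$ each carry $O(1)$ energy. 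At the smallest scale $r_0$ the bound $\|H\|_{L^\infty}r_0^N$ is used. This should yield a linear-in-defect inequality of the form
\begin{align*}
F(R)\le C+C\,F(4R)^{1-\sigma/N},
\end{align*}
and iterating it from polynomial growth forces $F$ to be bounded, in particular $F(R)=o(R)$. Making this multiscale charging precise, with consistent constants for the overlap of doubled balls, is the step I expect to be hardest.
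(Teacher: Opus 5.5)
Your setup is fine up to the crude inequality, and you correctly see that it does not close. The gap is in your proposed repair. You claim that the bad balls at all dyadic scales can be charged against the single budget $D_{2R}(a)$ by ``nested disjointness''. That is false. $D(\Omega)$ is monotone in $\Omega$, since every $\Delta_e(I,J)\ge 0$. The doubled ball of a bad child at scale $2^{-k-1}R$ lies inside the doubled ball of its bad parent, so the same defect is counted once at every scale.

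Consequently Lemma~\ref{lem:defect-properties}(2) can only be applied one scale at a time. If the defect concentrates near one point, there is a bad ball at every scale while $D_{2R}$ stays of order $\delta_*$. Done correctly, your recursion gives at most $C\,D_{2R}$ bad balls per scale, hence $O(D_{2R})$ energy from the good children at each scale. Summing over the roughly $\log R$ scales between $R$ and a bottom scale of order $L_*$ (after normalizing $\sup m=1$) yields
\begin{align*}
E(R)\le C+C\log(2+R)\,D_{2R}.
\end{align*}
This is not your log-free bound $F(R)\le C+CF(4R)^{1-\sigma/N}$, and in particular you cannot conclude that $F$ is bounded.

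The missing observation is that this logarithmic loss is harmless. Set $F=1+E$ and $\theta=1-\sigma/N$; Proposition~\ref{prop:defect-energy} gives $F(R)\le C\log(2+R)\,F(4R)^\theta$. Take logarithms and iterate. The tail $\theta^k\log F(4^kR)$ tends to $0$ by the a priori bound $F\le C(1+R^N)$, and $\sum_j\theta^j\log(1+j)<\infty$. This gives $E(R)\le C(\log(2+R))^{N/\sigma}=o(R)$, which is all Theorem~\ref{thm:sublinear-energy} needs.

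This is exactly the paper's route, except that the paper organizes the counting by level sets of $m$ instead of dyadic balls. For $16L_*/R\le t\le 1$, Lemma~\ref{D->m} shows that every point of $\Omega_t=\{x\in B_R: m(x)>t\}$ is the center of a bad ball $B_{16L_*/t}(x)$. A maximal disjoint family of these balls gives $|\Omega_t|\le Ct^{-N}D_{2R}$. The layer-cake formula $\int_{B_R}m^N=N\int_0^1 t^{N-1}|\Omega_t|\,dt$ then produces the $\log(1/t_0)$ factor directly. Your translation-invariant supremum over centers is not needed.
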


\begin{proof}
    Suppose, to the contrary, that $(u,v) \notin \mathcal G$. Since $ m $ is positive and bounded, without loss of generality, we may assume $ \sup_{\R^N}m=1 $. This follows by dilating $(u,v)$ and keeping the same notation. Fix $ R > 16 L_* $ and $ (16 L_*)/R\leq t\leq 1$. Set
    \begin{align}\label{bounded-main-1}
        \Omega_t = \{ x \in B_R: m(x) > t \}, \ \  r=\frac{8 L_*}{t} \leq\frac{R}{2}.
    \end{align}
    For every $ x \in \Omega_t $, we have $ m(x) > 8 L_*/r $. Then Lemma \ref{D->m} implies
    \begin{align*}
        D(B_{2r}(x)) \geq \delta_*.
    \end{align*}
    Since $x\in B_R$ and $2r\leq R$, each ball $B_{2r}(x)$ lies in $B_{2R}$. Choose a maximal pairwise disjoint family $B_{2r}(x_1),\ldots,B_{2r}(x_k)$ with $x_i\in\Omega_t$. Then $\Omega_t\subset\bigcup_{i=1}^k B_{4r}(x_i)$. By Lemma \ref{lem:defect-properties},
    \begin{align}\label{bounded-main-2}
        k \delta_* \leq \sum_{i=1}^k D(B_{2r}(x_i)) \leq D_{2R}.
    \end{align}
    It follows from \eqref{bounded-main-1} and \eqref{bounded-main-2} that
    \begin{align}\label{bounded-main-3}
        |\Omega_t| \leq \sum_{i=1}^k |B_{4r}(x_i)| \leq C k r^N \leq C t^{-N} D_{2R},
    \end{align}
    where $C$ depends on $L_*$ and $\delta_*$.

    By the definition of $m$, we have $u^{q+1}+v^{p+1}\leq m^N$. Take $ t_0 \in (0,1) $, we obtain
    \begin{align*}
        E(R) & \leq \int_{B_R} m^N \ \dx = N \int_0^1 t^{N-1}  |\Omega_t|\ \dt \\
        & \leq N |B_R| \int_0^{t_0} t^{N-1} \ \dt + C D_{2R} \int_{t_0}^1 \frac{\dt}{t} \\
        & = |B_1| R^N t_0^N + C D_{2R} \log \frac{1}{t_0}.
    \end{align*}
    For $ R > 16 L_* $, we choose $t_0 = 16L_*/R$ and obtain
    \begin{align}\label{bounded-main-4}
        E(R) \leq C+C\log(2+R)D_{2R}.
    \end{align}
    Fix
    \begin{align*}
        0 < \sigma < \min\{1, \alpha, \beta\}, \ \ \theta = 1- \frac{\sigma}{N} \in(0,1).
    \end{align*}
    Applying Proposition \ref{prop:defect-energy} at radius $2R$, we obtain
    \begin{align*}
        D_{2R} \leq C( E(4R)^{1-\sigma/N} + E(4R)^{1-1/N} ) \leq C\bigl(1+E(4R)\bigr)^\theta,
    \end{align*}
    which, together with \eqref{bounded-main-4}, gives
    \begin{align}\label{bounded-main-5}
        1+E(R) \leq C \log(2+R) \bigl( 1+E(4R) \bigr)^\theta,
    \end{align}
    when $ R > 16L_* $. For $ 1 \leq R \leq 16 L_*$, the same estimate follows after increasing $C$, since $ u^{q+1} + v^{p+1} \leq m^N\leq1$. Thus, \eqref{bounded-main-5} holds for all $ R \geq 1 $.

    Set $ F(R) = 1 + E(R) $. Then $ m \leq 1$ implies that
    \begin{align}\label{bounded-main-6}
        1 \leq F(R) \leq1+|B_R|\leq C(1+R^N), \ \ \text{for all} \ \ R\geq1,
    \end{align}
    and \eqref{bounded-main-5} implies
    \begin{align}\label{bounded-main-7}
        F(R)\leq C\log(2+R)F(4R)^\theta.
    \end{align}
    Taking logarithms in \eqref{bounded-main-7}, we obtain
    \begin{align}\label{bounded-main-8}
        \log F(R) \leq \log C + \log \log (2+R) + \theta \log F(4R).
    \end{align}
    Iterating \eqref{bounded-main-8} $k$ times, we obtain
    \begin{align}\label{bounded-main-9}
        \log F(R) \leq \sum_{j=0}^{k-1} \theta^j \bigl( \log C + \log\log( 2 + 4^jR ) \bigr) + \theta^k \log F(4^k R).
    \end{align}
    From \eqref{bounded-main-6}, we deduce for every fixed $ R \geq 1 $ that
    \begin{align}\label{bounded-main-10}
        0 \leq \theta^k \log F(4^kR) \leq \theta^k \bigl( C + N \log R + k N \log4 \bigr) \to 0,
    \end{align}
    as $k\to\infty$. Moreover,
    \begin{align}\label{bounded-main-11}
        \log \log(2+4^jR) & \leq \log\bigl( j \log4 + \log(2+R) \bigr) \nonumber \\
        & \leq \log \log(2+R) + \log(1+j)+C.
    \end{align}
    Since $\sum_{j\geq0}\theta^j\log(1+j)<\infty$, letting $k\to\infty$ in \eqref{bounded-main-9} and using \eqref{bounded-main-10}--\eqref{bounded-main-11}, we obtain
    \begin{align*}
        \log F(R)\leq\frac1{1-\theta}\log\log(2+R)+C.
    \end{align*}
    Therefore,
    \begin{align*}
        E(R) \leq F(R) \leq C\bigl( \log(2+R)\bigr)^{1/(1-\theta)} = C \bigl( \log(2+R) \bigr)^{N/\sigma} = o(R).
    \end{align*}
    Theorem \ref{thm:sublinear-energy} gives $(u,v)\in\mathcal G$, a contradiction. This completes the proof.
\end{proof}

Finally, we prove Theorem \ref{thm:main} by removing the boundedness assumption in Theorem \ref{thm-bounded}.

\begin{proof}[Proof of the main theorem]
    Let $(u,v)$ be a positive solution of \eqref{Main1}. Suppose, to the contrary, that $(u,v)\notin\mathcal G$. We use Lemma \ref{lem:doubling} to obtain a bounded positive solution and then apply Theorem \ref{thm-bounded}.

    Fix $x_0\in\R^N$. For each positive integer $j$, apply Lemma \ref{lem:doubling} with $\Omega=\R^N$, $M=m$, and $k=j$. We obtain a point $y_j$ such that
    \begin{align*}
        m(x) \leq 2 m(y_j),\ \ |x-y_j|<\frac{j}{m(y_j)}.
    \end{align*}
    Write $M_j=m(y_j)$ and define
    \begin{align*}
        u_j(x)=M_j^{-\alpha}u(y_j+x/M_j), \ \  v_j(x)=M_j^{-\beta} v(y_j+x/M_j).
    \end{align*}
    Then $(u_j,v_j)$ is a positive solution of \eqref{Main1} outside $\mathcal G$. With $ m_j = u_j^{1/\alpha} + v_j^{1/\beta} = M_j^{-1}m(y_j+x/M_j)$, we have
    \begin{align*}
        m_j(0)=1,\ \  m_j(x)\leq2 , \ \ \text{in } \ \ B_j.
    \end{align*}
    On every fixed ball, $(u_j,v_j)$ is uniformly bounded for sufficiently large $j$. Interior $W^{2,r}$ estimates with $r>N$, followed by Schauder estimates, give uniform local $C^{2,\gamma}$ bounds for some $\gamma>0$. This also holds when $p<1$, since the power nonlinearities are H\"older continuous on bounded nonnegative intervals. Taking a diagonal subsequence, we obtain $(u_j,v_j) \to (u_\infty,v_\infty)$ in $C^2_{\mathrm{loc}}(\R^N)$, and the limit $(u_{\infty}, v_{\infty})$ is also a nonnegative solution of \eqref{Main1}, with
    \begin{align}\label{prove-bounded-1}
        u_\infty(0)^{1/\alpha} + v_\infty(0)^{1/\beta}=1, \ \ u_\infty(x)^{1/\alpha} + v_\infty(x)^{1/\beta} \leq2 \quad \text{in }\R^N.
    \end{align}
    From the strong maximum principle, we know that $u_{\infty}$ and $v_{\infty}$ are positive. Theorem \ref{thm-bounded} then gives $ (u_\infty,v_\infty ) \in \mathcal G$. After applying translation and dilation to $(u_j,v_j)$, and keeping the same notation, we may assume that $(u_j,v_j)\to(U,V)$ in $C^2_{\mathrm{loc}}(\R^N)$.

    As at the end of the proof of Theorem \ref{thm:defect-gap}, for sufficiently large $j$ there is an interior maximum point $a_j$ of $u_j$ in a fixed neighborhood of the origin, with
    \begin{align}\label{prove-bounded-2}
        a_j \to 0, \ \ \nabla u_j(a_j)=0, \ \  u_j(a_j)\to U(0)=1.
    \end{align}
    Set $\lambda_j=u_j(a_j)^{-1/\alpha}$ and define
    \begin{align*}
        \widehat u_j(x)=\lambda_j^\alpha u_j(a_j+\lambda_jx), \ \ \widehat v_j(x)=\lambda_j^\beta v_j(a_j+\lambda_jx).
    \end{align*}
    Then $ \widehat u_j(0)=1,  \nabla\widehat u_j(0)=0 $,  and \eqref{prove-bounded-2} gives
    \begin{align*}
        (\widehat u_j,\widehat v_j) \to (U,V) \ \ \text{in }C^2_{\mathrm{loc}}(\R^N).
    \end{align*}
    By Theorem \ref{Isolate-thm}, $(\widehat u_j,\widehat v_j)=(U,V)$ for all sufficiently large $j$. From the definition of $ (\widehat u_j, \widehat v_j) $ and $ (u_j, v_j )$, we get $ (u, v ) \in \mathcal G $, a contradiction. Thus every positive solution of \eqref{Main1} belongs to $\mathcal G$, and there exist $a\in\R^N$ and $\lambda>0$ such that
    \begin{align*}
        u(x)=\lambda^\alpha U(\lambda(x-a)), \ \
        v(x)=\lambda^\beta V(\lambda(x-a)).
    \end{align*}
    In fact, $ a $ is the unique maximal point of $u$ and $ \lambda = u(a)^{1/\alpha}$. Since $\alpha(q+1)=\beta(p+1)=N$, we have
    \begin{align*}
        \int_{\R^N}u^{q+1}\,\dx &= \lambda^{\alpha(q+1)-N}\int_{\R^N}U^{q+1}\,\dx=E_*,\\
        \int_{\R^N}v^{p+1}\,\dx &=\lambda^{\beta(p+1)-N}\int_{\R^N}V^{p+1}\,\dx=E_*.
    \end{align*}
     This completes the proof.
\end{proof}

\medskip
\noindent\textbf{Data availability statement.}
No datasets were generated or analyzed in this study.

\medskip

\noindent\textbf{Declaration on the use of AI tools.}
The authors used ChatGPT to assist in exploring research ideas and possible proof strategies, and to improve the language of the Introduction. All mathematical arguments and proofs were carefully checked and verified by the authors. The manuscript was written and finalized by the authors, who take full responsibility for its content.

\end{document}